\theoremstyle{theorem}
\newtheorem{theorem}{\sc Theorem}[section]
\newtheorem{lemma}[theorem]{\sc Lemma}
\newtheorem{proposition}[theorem]{\sc Proposition}
\newtheorem{corollary}[theorem]{\sc Corollary}
\theoremstyle{definition}
\newtheorem{definition}[theorem]{\sc Definition}
\newtheorem*{notation*}{Notation}
\theoremstyle{remark}
\newtheorem{remark}[theorem]{\sc Remark}
\renewcommand{\d}{\text{\rm d}}
\newcommand{\D}{\text{\rm D}}
\newcommand{\vep}{\varepsilon}
\newcommand{\R}{\mathbb{R}}
\renewcommand{\P}{{\sf (P)}}
\newcommand{\Pk}{${\sf (P)}_k$}
\newcommand{\BC}{{\sf (BC)}}
\newcommand{\inz}{\in\mathbb{Z}}
\newcommand{\inr}{\in\mathbb{R}}
\newcommand{\dv}{{\rm{div}}}
\newcommand{\loc}{\mathrm{loc}}
\newcommand{\supp}{\mathrm{supp}\,}
\newcommand{\diam}{\mathrm{diam}\,}
\newcommand{\prf}[1]{}  
\begin{document}
\title[$p$-Laplace elliptic systems for locally integrable forcing]{Existence of distributional solutions to degenerate elliptic systems for locally integrable forcing\prf{\\{\tt [Extended version]}}}
%
%
\author{Goro Akagi}
\address[Goro Akagi]{Mathematical Institute and Graduate School of Science, Tohoku University, Aoba, Sendai 980-8578, Japan}
\email{goro.akagi@tohoku.ac.jp}
\author{Hiroki Miyakawa}
\address[Hiroki Miyakawa]{Aioi Nissay Dowa Insurance Co., Ltd., 1-28-1 Ebisu Shibuya Tokyo 150-8488, Japan}
\email{hiroki.miyakawa.t1@alumni.tohoku.ac.jp}
%
%
\date{\today}
\dedicatory{Dedicated to the memory of Professor Marek Fila}
\maketitle
\begin{abstract}
This paper presents an existence result and maximal regularity estimates for distributional solutions to degenerate/singular elliptic systems of $p$-Laplacian type with absorption and (prescribed) locally integrable forcing posed in (possibly unbounded) Lipschitz domains. In particular, the forcing terms may not belong to the dual space of an energy space, e.g., $W^{1,p}_{\rm loc}$, which is necessary for the existence of weak (or energy) solutions of class $W^{1,p}_{\rm loc}$. The method of a proof relies on both local energy estimates and a relative truncation technique developed by Bul\'{i}\v{c}ek and Schwarzacher (Calc.~Var.~PDEs in 2016), where the bounded domain case is studied for (globally) integrable forcing.
\end{abstract}

\section{Introduction}

Let $d, N \inz_{\geq 1}$ and let $\Omega$ be a (possibly unbounded) Lipschitz domain of $\mathbb{R}^d$. Let $1 < p < r < \infty$ and let $q,s \in (1, \infty)$ satisfy
\begin{equation}\label{exp}
p-1 \leq q < p \quad \mbox{ and } \quad r-1 \leq s < r.
\end{equation}
In the present paper, we consider the Dirichlet problem for the following degenerate/singular elliptic system of $p$-Laplacian type with absorption and forcing terms:
$$
\left\{
\begin{alignedat}{4}
& - \dv \, A(\,\cdot\,,\nabla u) + |u|^{r-2}u = - \,\dv \, (|f|^{p-2}f)+|g|^{r-2}g \ &&\mbox{ in } \
\Omega,\\
& u = 0 \ &&\mbox{ on } \ \partial \Omega,
\end{alignedat}
\right.\leqno{\P}
$$
where $u : \Omega \to \mathbb{R}^N$ is an unknown function, and $f \in L^q_{\loc}(\overline{\Omega};\mathbb{R}^{d\times N})$ and $g\in L^s_{{\loc}}(\overline{\Omega};\mathbb{R}^N)$ are prescribed. In particular, when $\Omega = \R^d$, we ignore the Dirichlet boundary condition. Moreover, $A : \Omega \times \mathbb{R}^{d\times N} \to \mathbb{R}^{d\times N}$ is a Carath\'eodory function complying with the following assumptions: There exist constants $C_1, C_2 > 0$ and nonnegative functions $\beta_1 \in L^{1}_{\loc}(\overline{\Omega})$, $\beta_2 \in L^{p'}_{\loc}(\overline{\Omega})$ such that
\begin{gather}
A(x,z) : z \geq C_1|z|^p - \beta_1(x),\label{ineq:ellipAcoer}\\
|A(x,z)| \leq C_2|z|^{p-1} + \beta_2(x),\label{ineq:ellipAbdd}\\
(A(x,z_1)-A(x,z_2)) : (z_1-z_2) \geq 0\label{ineq:ellipAmono}
\end{gather}
for $z,z_1,z_2 \inr^{d\times N}$ and a.e.~$x \in \Omega$. Hence a typical example of the elliptic operators satisfying \eqref{ineq:ellipAcoer}--\eqref{ineq:ellipAmono} is the so-called $p$-Laplace operator,
$$
-\dv \,A(\,\cdot\,,\nabla u) = - \Delta_p u := - \,\dv\, (|\nabla u|^{p-2}\nabla u),
$$
for which we set $A(x,z) = |z|^{p-2}z$ for $z \inr^{d\times N}$ and a.e.~$x \in \Omega$ obviously.

In case $f \in L^p(\Omega;\mathbb{R}^{d\times N})$ and $g \in L^r(\Omega;\mathbb{R}^N)$ (i.e., they are globally integrable over $\Omega$ with $p = q$ and $r = s$), existence of weak solutions can be proved in a standard manner (see Proposition \ref{prop:approxsol} below). On the other hand, in case either $p > q$ or $r > s$ holds, \P \ admits no weak solution for general $f \in L^q(\Omega;\mathbb{R}^{d\times N})$ and $g \in L^s(\Omega;\mathbb{R}^N)$ due to a simple comparison of both sides of the equation. However, we can still expect existence of \emph{distributional} solutions to \P \ (see Definition \ref{D:sol} below). The main purpose of the present paper is to prove existence of such distributional solutions to \P \ for $f \in L^q_{\loc}(\overline{\Omega};\mathbb{R}^{d\times N})$ and $g\in L^s_{{\loc}}(\overline{\Omega};\mathbb{R}^N)$ with exponents $q,s \in (1,\infty)$ satisfying \eqref{exp} (whose lower bounds are necessary for the existence of distributional solutions) as well as to establish \emph{maximal regularity estimates} for such distributional solutions; this is a reason why we consider the forcing term of this form in \P. To be more precise, we expect that $\nabla u$ and $u$ can be estimated in terms of $f$ and $g$ in $L^q_{\rm loc}$ and $L^s_{\rm loc}$, respectively.

Elliptic PDEs have been vigorously studied in a vast amount of literatures (see, e.g.,~\cite{GT}). Among those, we here start with existence and regularity of locally integrable solutions to elliptic PDEs with locally integrable forcing. As is well known, an appropriate growth condition at infinity is imposed on the forcing term $F$ in order to assure existence of solutions to the Poisson equation in $\R^d$,
$$
-\Delta u = F \ \mbox{ in } \mathbb{R}^d.
$$
In~\cite{Bre84}, existence and uniqueness of locally integrable (distributional) solutions $u \in L^{r-1}_{\rm loc}(\mathbb{R}^d)$ are verified for the following elliptic PDE with an absorption for $2 < r < \infty$:
\begin{equation}\label{Br}
-\Delta u + |u|^{r-2}u = F \ \mbox{ in } \mathbb{R}^d
\end{equation}
without prescribing behaviors of $F \in L^1_{\rm loc}(\mathbb{R}^d)$ at infinity, and moreover, it holds that
$$
\int_{B_R} |u|^{r-1} \, \d x \leq C\left(1 + \int_{B_{R'}} |F| \, \d x\right)
$$
for any $R' > R > 0$. Such an elliptic equation \eqref{Br} with an absorption naturally appears as a stationary problem for the heat equation with an absorption term. Moreover,~\cite{Bre84} may also be inspired by studies on existence of solutions to parabolic equations with growing initial data (see, e.g.,~\cite{Evans}). Actually, in~\cite{Bre84}, a parabolic counterpart of the result for \eqref{Br} is also briefly discussed (e.g., solvability of the heat equation with an absorption is considered for any locally integrable initial data). In~\cite{BGV93}, the aforementioned result is also extended to degenerate and singular elliptic PDEs of $p$-Laplacian type such as
\begin{equation}\label{BGV}
-\Delta_p u + |u|^{r-2}u = F \ \mbox{ in } \mathbb{R}^d
\end{equation}
for $F \in L^1_{\rm loc}(\R^d)$ and $r > p$. The above case is much more technically involved than the semilinear case. Indeed, in contrast with~\cite{Bre84}, one has to establish a gradient estimate for certain approximate solutions $(u_n)$ due to the nonlinearity arising from the $p$-Laplacian. It is actually done in~\cite[Lemma 2.2]{BGV93} but in a totally different way from the present paper. Moreover, in order to identify a weak limit of the $p$-Laplacian term $\Delta_p u_n$, a pointwise convergence of the gradients $\nabla u_n$ is also derived by use of a strong monotonicity of the $p$-Laplacian (see assumption (A4) of~\cite{BGV93}), which is no longer assumed in the present paper (cf.~see \eqref{ineq:ellipAmono}).

On the other hand, the optimal regularity of the (distributional) solution is not fully pursued in these literatures. Actually, the Calder\'on--Zygmund estimate for the Laplacian fails in $L^1$. Moreover, it is rather delicate to determine the optimal regularity of solutions to degenerate and singular nonlinear elliptic PDEs such as \eqref{BGV} even posed in bounded domains.

A regularity theory for degenerate and singular elliptic PDEs, often referred as a \emph{nonlinear Calder\'on--Zygmund theory}, has been remarkably developed in the last few decades. It is concerned with weak solutions $u \in W^{1,p}_0(\Omega)$ to the (homogeneous) Dirichlet problem of the form,
\begin{equation}\label{pLap-div}
- \,\dv \, \left(|\nabla u|^{p-2}\nabla u \right) = - \, \dv \, \left(|f|^{p-2}f \right) \ \mbox{ in } \Omega,
\end{equation}
where $\Omega$ is a bounded domain of $\R^d$ and $1 < p < \infty$, and provides a maximal regularity estimate,
\begin{equation}\label{nCZ}
\|\nabla u\|_{L^q(\Omega)} \leq C \|f\|_{L^q(\Omega)}
\end{equation}
for $f \in L^q(\Omega)$ and $q \geq p$. See~\cite{Iwa83},~\cite[Theorem 4.1]{GiaGiu82},~\cite{AcMi05},~\cite[Theorem 7.8]{KrMi06}. Here we also remark that the estimate \eqref{nCZ} can fail for vectorial cases with $q > p = 2$ large enough (see~\cite{Nec77}). We further refer the reader to survey papers~\cite{Min10,Min11,Min17} and references therein. On the other hand, its extension to the case where $1 < q < p$ is not straightforward at all in contrast with the linear case, for which a duality argument works. Moreover, even existence of (distributional) solutions is already highly nontrivial when $q < p$. In~\cite{Iwa92} and~\cite{Lew93}, maximal regularity estimates are established with $q < p$ close enough to $p$ for distributional solutions defined on $\R^d$, provided that they exist. Moreover, an existence result and maximal regularity estimates for distributional solutions to \eqref{pLap-div} are first proved for bounded domains and $q < p$ close enough to $p$ by Bul\'{i}\v{c}ek and Schwarzacher in~\cite{BuSch16}, which relies on sophisticated techniques such as an \emph{$\mathcal{A}_p$-weighted biting div-curl lemma} developed in~\cite[Theorem 2.6]{BDS} as well as a \emph{relative truncation technique} for establishing maximal regularity estimates. 

The aim of the present paper may also be regarded as an extension of the nonlinear Calder\'on--Zygmund theory to the elliptic system \P \ with an arbitrary locally integrable forcing $f \in L^q_{\rm loc}(\overline{\Omega};\R^{d\times N})$ and $g \in L^s_{\rm loc}(\overline{\Omega};\R^N)$ for $q$ and $s$ being in the range \eqref{exp}. Here we emphasize again that the presence of the absorption term in \P \ is necessary to prove an existence result as mentioned above (see~\cite{Bre84,BGV93}), and furthermore, the forcing terms of the equation in \P \ are designed in such a way as to discuss the maximal regularity of gradients of distributional solutions.

Finally, we give a couple of related references without any claim of the completeness, although all these results are devoted to equations of different type posed in \emph{bounded} domains only. The results in~\cite{BDS,BuSch16} have been extended to stationary Navier--Stokes equations with non-Newtonian stress tensors in~\cite{MiSc21}, where a solenoidal Lipschitz truncation is also developed (cf.~see also~\cite{BBS16}, which is a generalization of~\cite{BDS}). One can find parabolic counterparts of these studies in, e.g.,~\cite{BBS19,BBKOS} and references therein.

The present paper consists of six sections. In Section \ref{S:result}, we state a main result, which is concerned with maximal regularity estimates for distributional solutions to \P. Section \ref{S:prelim} is devoted to recalling preliminary facts on the Muckenhoupt class $\mathcal{A}_p$, an $\mathcal{A}_p$-weighted biting div-curl lemma and the Whitney decomposition of open sets for later use. In Section \ref{S:weak}, we assure existence and uniqueness of weak solutions to \P \ for $f \in L^p(\Omega;\R^{d\times N})$ and $g \in L^r(\Omega;\R^N)$. Section \ref{S:loc_en_est} provides weighted local energy estimates for weak solutions to \P, which play a crucial role to prove the main result and where main technical novelty may reside. Finally, in Section \ref{S:proof}, we complete a proof of the main result.

\begin{notation*}
For $a,b \inr^d$, $c,d \inr^N$ and $A,B \inr^{d\times N}$, we denote by $a \cdot b$, $c\cdot d$ and $A : B$ (standard) inner products, that is,
$$
a \cdot b = \sum_{j=1}^d a_jb_j, \quad c \cdot d = \sum_{k=1}^N c_k d_k, \quad A : B = \sum_{j=1}^d\sum_{k=1}^N A_{jk}B_{jk},
$$
respectively. Then $|a|$, $|c|$ and $|A|$ denote standard Euclidean norms of $a$, $c$ and $A$, respectively, that is, 
$$
|a| = \sqrt{a \cdot a}, \quad |c| = \sqrt{c \cdot c}, \quad |A| = \sqrt{A : A}.
$$
Moreover, $a \otimes c \in \R^{d \times N}$ is a direct product of $a$ and $c$, that is, $(a \otimes c)_{ik} = a_i c_k$. Hence we have $|a \otimes c| = |a| |c|$. For $u = (u_k): \Omega \to \R^N$ and $F = (F_{jk}) : \Omega \to \R^{\d \times N}$ smooth enough, $\nabla u : \Omega \to \R^{d \times N}$ and $\dv \, F : \Omega \to \R^N$ are defined as
$$
(\nabla u)_{jk} = \partial_{x_j} u_k = \frac{\partial u_k}{\partial x_j}, \quad (\dv \, F)_k = \sum_{i=1}^d \partial_{x_i} F_{ik}
$$
for $j = 1,\ldots,d$ and $k = 1,\ldots,N$. For $x \inr^d$ and $R > 0$, we write $B_R(x) := \{ y \inr^d \colon |x-y| < R \}$ and $B_R := B_R(0)$. Furthermore, for any domain $\Omega$ of $\R^d$, we simply write $\Omega_R := \Omega \cap B_R$. For each $f \in L^1_{\loc}(\mathbb{R}^d)$, we denote by $Mf$ the Hardy--Littlewood maximal function, that is,
$$
(Mf)(x) = \sup_{r>0} \, \fint_{B_r(x)} |{f}(y)| \,\d y
$$
for $x \in \mathbb{R}^d$. Here and henceforth, we write 
$$
\fint_B g(y)\,\d y = |B|^{-1}\int_B g(y) \,\d y
$$
for any $g \in L^1_{\rm loc}(\R^d)$ and open ball $B$ in $\R^d$. For $p \in [1,\infty]$, we denote by $p'$ the H\"{o}lder conjugate of $p$, that is, $p' = p/(p-1)$, $1' = \infty$ and $\infty' = 1$. Moreover, $C$ stands for a generic nonnegative constant which may vary from line to line.
\end{notation*}

\section{Main Result}\label{S:result}

In order to impose the (homogeneous) Dirichlet boundary condition on locally integrable distributional solutions to the Dirichlet problem \P, let us first set up its weak formulation below. Let $B_R$ denote the open ball in $\R^d$ centered at the origin of radius $R$ and let $(\rho_R)$ be a family of smooth cut-off functions satisfying
\begin{equation}\label{cut-off}
\left\{
\begin{aligned}
&\rho_R \in C^\infty_c(\R^d), \quad 0 \leq \rho_R \leq 1 \ \mbox{ in } \mathbb{R}^d,\\
&\rho_R \equiv 1 \ \mbox{ on } B_R, \quad \supp \rho_R \subset \overline{B_{2R}}
\end{aligned}
\right. 
\end{equation}
for any $R > 0$. In what follows, the homogeneous Dirichlet boundary condition for $u \in W^{1,1}_{\rm loc}(\overline{\Omega};\mathbb{R}^N)$ will be regarded (in a weak sense) as
$$
u \rho_R \in W_0^{1,1}(\Omega;\mathbb{R}^N) \ \mbox{ for any } R > 0.
\leqno{\sf (BC)}
$$
Indeed, if $u \in C(\overline{\Omega};\mathbb{R}^N)$, then one finds that $\mathrm{Tr}(u\rho_R) = (u\rho_R)|_{\partial \Omega}$ for any $R > 0$, whence it follows from \BC \ that $u\rho_R = 0$ on $\partial\Omega$ for any $R>0$. Here $\mathrm{Tr}:W^{1,1}(\Omega;\mathbb{R}^N)\to L^{1}(\partial\Omega;\mathbb{R}^N)$ denotes the trace operator. Therefore from the arbitrariness of $R > 0$, the homogeneous Dirichlet condition, i.e., $u=0$ on $\partial\Omega$, follows.

We now set up notions of weak and distributional solutions to \P.
\begin{definition}\label{D:sol}
Let $1 < p < r < \infty$ and let $f \in L^{\max\{1,p-1\}}_{\rm loc}(\overline{\Omega};\R^{d\times N})$ and $g \in L^{\max\{1,r-1\}}_{\rm loc}(\overline{\Omega};\R^N)$.
\begin{enumerate}
\item Set $X^{p,r}(\Omega;\mathbb{R}^N) := \overline{C_c^\infty(\Omega;\mathbb{R}^N)}^{\|\cdot\|_{X^{p,r}}}$ equipped with the norm,
$$
\|u\|_{X^{p,r}}:=\|\nabla u\|_{L^p(\Omega;\mathbb{R}^{d\times N})}+\|u\|_{L^r(\Omega;\mathbb{R}^N)} 
$$
for $u \in X^{p,r}(\Omega;\mathbb{R}^N)$.
\item A function $u \in X^{p,r}(\Omega;\mathbb{R}^N)$ is called a \emph{weak solution} to \P, if it holds that
\begin{align*}
\lefteqn{
\int_\Omega A(\cdot,\nabla u) : \nabla \varphi \,\d x + \int_\Omega |u|^{r-2}u \cdot \varphi \,\d x
}\\
& = \int_\Omega |f|^{p-2}f : \nabla \varphi \,\d x + \int_\Omega |g|^{r-2}g \cdot \varphi \,\d x
\end{align*}
for any $\varphi \in C_c^\infty(\Omega;\mathbb{R}^N)$.
\item A function $u \in W_{\loc}^{1,\max\{1,p-1\}}(\overline{\Omega};\mathbb{R}^N) \cap L_{\loc}^{\max\{1,r-1\}}(\overline{\Omega};\mathbb{R}^N)$ is called a \emph{distributional solution} to \P, if it holds that
\begin{align*}
\lefteqn{
\int_\Omega A(\cdot,\nabla u) : \nabla\varphi \,\d x + \int_\Omega |u|^{r-2}u \cdot \varphi \,\d x
}\\
& = \int_\Omega |f|^{p-2}f : \nabla \varphi \,\d x + \int_\Omega |g|^{r-2}g \cdot \varphi \,\d x
\end{align*}
for any $\varphi \in C_c^\infty(\Omega;\mathbb{R}^N)$, and moreover, \BC \ holds for an arbitrary family $(\rho_R)$ of cut-off functions satisfying \eqref{cut-off}.
\end{enumerate}
\end{definition}

Our main result reads,

\begin{theorem}\label{thm:elliplocDS}
Let $\Omega$ be a {\rm (}possibly unbounded{\rm )} Lipschitz domain of $\mathbb{R}^d$ and let $1 < p < r < \infty$. Then there exists a positive constant $\varepsilon_0 = \varepsilon_0(C_1,C_2,\Omega,d,N,p,r) \in (0,p/r]$ satisfying the following\/{\rm :} Let $\varepsilon \in (0,\varepsilon_0)$ be fixed and set
$$
q := p - \varepsilon \in (p-1,p) \quad \mbox{ and } \quad s := \frac{p-\vep }p r \in (r-1,r).
$$
Then for any $f \in L_{\loc}^q(\overline{\Omega};\mathbb{R}^{d\times N})$ and $g \in L_{\loc}^s(\overline{\Omega};\mathbb{R}^{N})$, the Dirichlet problem \P \ admits a distributional solution $u \in W^{1,q}_{\loc}(\overline{\Omega};\mathbb{R}^N) \cap L^{s}_{\loc}(\overline{\Omega};\mathbb{R}^N)$ such that
\begin{align}
\MoveEqLeft{
\int_{\Omega_{R}} |\nabla u|^p \omega \,\d x + \int_{\Omega_{R}} |u|^r \omega \,\d x
}\notag\\
&\leq C_0 \left( \int_{\Omega_{2R}} |f|^p \omega \,\d x + \int_{\Omega_{2R}} |g|^r \omega \,\d x \right. \notag\\
&\qquad + \left. \int_{\Omega_{2R}} \beta_1 \omega\,\d x + \int_{\Omega_{2R}}\beta_2^{p'} \omega\,\d x + \delta^{-\vep} R^{d - \frac{pr}{r-p}} \right) \label{thm:Localest}
\end{align}
for any $R > 0$ and $\delta \in (0,1]$. Here $C_0 = C_0(C_1,C_2,\Omega,d,N,p,r,\varepsilon)$ is a positive constant and a function $\omega : \R^d \to (0,\infty)$ is given by
$$
\omega(x) = \left( M \left[ \big( |\bar f|+|\bar g|^{s/q} \big) \chi_{\Omega_{2R}} \right](x) + \delta \right)^{-\vep} \ \mbox{ for } \ x \in \R^d, 
$$
where $\bar f, \bar g$ denote the zero extensions onto $\R^d$ of $f,g$, respectively, and $\chi_{\Omega_{2R}}$ stands for the characteristic function supported over $\Omega_{2R}$.
\end{theorem}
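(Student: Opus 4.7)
The plan is to realize the desired distributional solution as the limit, in appropriate local topologies, of weak solutions corresponding to truncated data. First I would fix $R > 0$ and $\delta \in (0,1]$, and pick $f_n \in L^p(\Omega;\mathbb{R}^{d\times N})$ and $g_n \in L^r(\Omega;\mathbb{R}^N)$ with $f_n \to f$ in $L^q_{\loc}(\overline{\Omega})$, $g_n \to g$ in $L^s_{\loc}(\overline{\Omega})$ (e.g., via truncation and restriction to $\Omega_n$). The existence theorem for weak solutions established in Section~\ref{S:weak} then supplies $u_n \in X^{p,r}(\Omega;\mathbb{R}^N)$ solving $\P$ with data $(f_n,g_n)$, to which I would apply the weighted local energy estimate of Section~\ref{S:loc_en_est} with the weight
$$
\omega_n(x) := \bigl( M\bigl[(|\bar f_n| + |\bar g_n|^{s/q})\chi_{\Omega_{2R}}\bigr](x) + \delta \bigr)^{-\varepsilon}.
$$
Choosing $\varepsilon_0$ small enough ensures $\omega_n \in \mathcal{A}_1 \subset \mathcal{A}_p$ uniformly in $n$ (Section~\ref{S:prelim}), so \eqref{thm:Localest} holds for $(u_n,f_n,g_n,\omega_n)$ with constants independent of $n$.

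From the identities $q = p-\varepsilon$ and $s = rq/p$ one computes $\omega_n^{-q/(p-q)} = (M[\cdots] + \delta)^q$ and $\omega_n^{-s/(r-s)} = (M[\cdots] + \delta)^q$. Since $|\bar f_n| + |\bar g_n|^{s/q} \in L^q(\mathbb{R}^d)$ and $M$ is bounded on $L^q$ for $q>1$, Hölder's inequality combined with the weighted estimate yields uniform bounds
$$
\|\nabla u_n\|_{L^q(\Omega_R;\mathbb{R}^{d\times N})} + \|u_n\|_{L^s(\Omega_R;\mathbb{R}^N)} \leq C(R,\delta).
$$
A diagonal extraction gives $u_n \rightharpoonup u$ in $W^{1,q}_{\loc}(\overline{\Omega};\mathbb{R}^N) \cap L^s_{\loc}(\overline{\Omega};\mathbb{R}^N)$ with $u_n \to u$ a.e.\ by Rellich--Kondrachov. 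The boundary condition $\BC$ passes to the limit since $u_n\rho_R \in W^{1,p}_0(\Omega;\mathbb{R}^N)$ and $u_n\rho_R \rightharpoonup u\rho_R$ in $W^{1,1}_0(\Omega;\mathbb{R}^N)$, and Vitali's theorem (using the uniform $L^{s/(r-1)}_{\loc}$ bound on $|u_n|^{r-2}u_n$ together with $s > r-1$) identifies $|u_n|^{r-2}u_n \to |u|^{r-2}u$ in $L^1_{\loc}$.

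The hardest step will be identifying the weak limit $\chi$ of the flux $A(\cdot,\nabla u_n)$, since no global energy identity is available. Here I would invoke the $\mathcal{A}_p$-weighted biting div-curl lemma of Section~\ref{S:prelim} applied to the pair $(A(\cdot,\nabla u_n),\nabla u_n)$ with weight $\omega_n$: the divergence side is controlled through the equation for $u_n$, the curl side vanishes, and the lemma yields, along a further subsequence and up to a biting exceptional set of arbitrarily small measure, the weighted limiting identity $\int A(\cdot,\nabla u_n):\nabla u_n \, \omega_n \to \int \chi:\nabla u \, \omega$. Combined with the monotonicity \eqref{ineq:ellipAmono} and Minty's trick in the weighted setting, this produces $\nabla u_n \to \nabla u$ a.e.\ and $\chi = A(\cdot,\nabla u)$, so that $u$ is a distributional solution. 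Finally, to obtain \eqref{thm:Localest} itself I would pass to the limit: $\omega_n \to \omega$ a.e.\ by continuity of $M$ on $L^q$, the left-hand side is lower semicontinuous via Fatou applied to the a.e.\ convergent integrands $|\nabla u_n|^p \omega_n$ and $|u_n|^r \omega_n$, and the right-hand side converges by the strong convergence of $(f_n,g_n)$. The principal obstacle is thus the nonlinear identification, where the weighted biting div-curl framework replaces the classical global energy argument.
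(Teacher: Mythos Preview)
Your overall architecture matches the paper's: approximate by truncated data, invoke the weighted local energy estimate of Section~\ref{S:loc_en_est}, extract weak limits, identify the absorption term via Rellich--Kondrachov and the flux via the $\mathcal{A}_p$-weighted biting div-curl lemma plus Minty. However, there is a genuine gap in how you handle the weight.

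You attach the weight to the \emph{approximations}, setting $\omega_n = (M[(|\bar f_n|+|\bar g_n|^{s/q})\chi_{\Omega_{2R}}]+\delta)^{-\varepsilon}$, and then try to pass $\omega_n \to \omega$ simultaneously with $u_n \to u$. This creates two problems. First, the biting div-curl lemma (Lemma~\ref{lem:WDC}) is stated for a \emph{fixed} $\omega \in \mathcal{A}_p$; it does not give a conclusion of the form $\int a^n \cdot b^n\,\omega_n \to \int a\cdot b\,\omega$ with a moving weight, and uniform $\mathcal{A}_p$ constants alone do not rescue this. Second, your passage to the limit in \eqref{thm:Localest} relies on Fatou applied to $|\nabla u_n|^p\omega_n$, for which you invoke ``$\nabla u_n \to \nabla u$ a.e.'' as a consequence of Minty's trick. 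Under the bare monotonicity hypothesis \eqref{ineq:ellipAmono} (no strict monotonicity is assumed), Minty only identifies $\chi = A(\cdot,\nabla u)$; it does not yield pointwise convergence of the gradients, so the Fatou step is unjustified.

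The paper avoids both difficulties with a single observation you are missing: Proposition~\ref{prop:LocEstimate} is proved for an \emph{arbitrary} auxiliary function $h \in L^1_{\loc}(\overline{\Omega})\setminus\{0\}$, independent of the data in the equation. Hence one may apply it to each approximate solution $u^k$ with the \emph{fixed} choice $h = |f| + |g|^{s/q}$ built from the limiting data. The weight $\omega = (M[h\chi_{\Omega_{2R}}]+\delta)^{-\varepsilon}$ is then the same for every $k$, the sequences live in the fixed reflexive spaces $L^p_\omega$, $L^{p'}_\omega$, $L^r_\omega$, Lemma~\ref{lem:WDC} applies verbatim, and \eqref{thm:Localest} follows at the end from weak lower semicontinuity of the $L^p_\omega$- and $L^r_\omega$-norms, with no need for a.e.\ convergence of $\nabla u^k$.
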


Moreover, we have

\begin{corollary}\label{C:main}
Under the same assumptions as in Theorem {\rm \ref{thm:elliplocDS}} with $\beta_1 = \beta_2 = 0$, there exists a positive constant $\tilde{C}_0 = \tilde{C}_0(C_1,C_2,\Omega,d,N,p,r,\varepsilon)$ such that
\begin{align}
\MoveEqLeft{
\int_{\Omega_{R}} |\nabla u|^q \,\d x + \int_{\Omega_{R}} |u|^s \,\d x
}\notag\\
&\leq \tilde{C}_0 \left( \int_{\Omega_{2R}} |f|^q \,\d x + \int_{\Omega_{2R}} |g|^s \,\d x 
+ R^{d - \frac 12 \frac{pr}{r-p}} 
\right)\label{thm:Localest2}
\end{align}
for any $R > 0$.
\end{corollary}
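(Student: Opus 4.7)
\emph{Plan.} The idea is to pass from the weighted estimate of Theorem~\ref{thm:elliplocDS} to the unweighted estimate of the corollary using H\"older's inequality, the Hardy--Littlewood maximal inequality, and pointwise absorption arguments exploiting the structure of the weight $\omega$. First, I would split
$$
\int_{\Omega_R}|\nabla u|^q\,\d x=\int_{\Omega_R}(|\nabla u|^p\omega)^{q/p}\,\omega^{-q/p}\,\d x
$$
and apply H\"older with conjugate exponents $(p/q,p/\varepsilon)$; likewise, I would write $|u|^s=(|u|^r\omega)^{s/r}\,\omega^{-s/r}$ and apply H\"older with $(r/s,r/(r-s))$. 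Because $\varepsilon\cdot q/(p-q)=q$ and because the identity $s=qr/p$ forces $s/(r-s)=q/\varepsilon$, in both cases the dual weight integral equals $\int_{\Omega_R}(M[(|\bar f|+|\bar g|^{s/q})\chi_{\Omega_{2R}}]+\delta)^q\,\d x$. The Hardy--Littlewood maximal theorem (valid as $q>1$ by \eqref{exp}) combined with $(|g|^{s/q})^q=|g|^s$ then controls this by $C(I+\delta^q R^d)$, where $I:=\int_{\Omega_{2R}}|f|^q\,\d x+\int_{\Omega_{2R}}|g|^s\,\d x$.

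Second, I would unweight the right-hand side of Theorem~\ref{thm:elliplocDS} pointwise. By the Lebesgue differentiation theorem, $M[(|\bar f|+|\bar g|^{s/q})\chi_{\Omega_{2R}}](x)\geq |f(x)|+|g(x)|^{s/q}$ for a.e.~$x\in\Omega_{2R}$, hence $\omega(x)\leq(|f(x)|+|g(x)|^{s/q}+\delta)^{-\varepsilon}$. Splitting into the cases $|f|\geq\delta$ vs.~$|f|<\delta$ and using $p-\varepsilon=q$ yields the pointwise bound $|f|^p\omega\leq|f|^q+\delta^q$; the analogous split for $g$, together with $r-\varepsilon s/q=s$ and $qr/s=p$, yields $|g|^r\omega\leq|g|^s+\delta^q$. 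Integrating and recalling $\beta_1=\beta_2=0$, Theorem~\ref{thm:elliplocDS} becomes
$$
\int_{\Omega_R}|\nabla u|^p\omega\,\d x+\int_{\Omega_R}|u|^r\omega\,\d x\leq C\bigl(I+\delta^q R^d+\delta^{-\varepsilon}R^{d-\frac{pr}{r-p}}\bigr).
$$

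Combining the two inputs (and noting $q/p+\varepsilon/p=1$) gives
$$
\int_{\Omega_R}|\nabla u|^q\,\d x+\int_{\Omega_R}|u|^s\,\d x\leq C\bigl(I+\delta^q R^d+\delta^{-\varepsilon}R^{d-\frac{pr}{r-p}}\bigr)^{q/p}\bigl(I+\delta^q R^d\bigr)^{\varepsilon/p}.
$$
The choice $\delta=\min\{1,R^{-r/(r-p)}\}\in(0,1]$ balances $\delta^q R^d$ against $\delta^{-\varepsilon}R^{d-pr/(r-p)}$ to a common value $R^{d-qr/(r-p)}$ in the regime $R\geq 1$; the net power of $\delta$ in the resulting bound vanishes because $-\varepsilon q/p+q\varepsilon/p=0$. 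After possibly shrinking $\varepsilon_0$ so that $q=p-\varepsilon\geq p/2$ (compatible with the original restriction $\varepsilon_0\leq p/r$), the resulting $R$-exponent satisfies $d-qr/(r-p)\leq d-\tfrac{1}{2}\tfrac{pr}{r-p}$ for $R\geq 1$; for $R<1$, the branch $\delta=1$ produces remainder terms $R^d$ and $R^{d-pr/(r-p)}$ which are again majorized by $R^{d-\frac{1}{2}\frac{pr}{r-p}}$ in this regime. The \textbf{main obstacle} is the exponent bookkeeping: one must verify that the $\delta$-dependence truly cancels and that the residual $R$-power matches the corollary's exponent uniformly in $R>0$; the entire scheme hinges on the precise identities $q+\varepsilon=p$ and $s=qr/p$ dictating the form of the weight $\omega$.
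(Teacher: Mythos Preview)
Your approach is essentially the paper's: H\"older to split off the weight, the Hardy--Littlewood maximal inequality to control $\int_{\Omega_R}\omega^{-q/\varepsilon}$, the pointwise absorption $|f|^p\omega\le|f|^q$ and $|g|^r\omega\le|g|^s$ (these are exactly the paper's \eqref{cut-f}--\eqref{cut-g}) to unweight the right-hand side of Theorem~\ref{thm:elliplocDS}, and finally optimization in $\delta$. The paper applies Young's inequality to pass to the additive form $\int|\nabla u|^q\le\int|\nabla u|^p\omega+C\int\omega^{-q/\varepsilon}$ before optimizing, whereas you keep the multiplicative H\"older form; these are equivalent manoeuvres.

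The defect is in your small-$R$ bookkeeping. You assert that for $R<1$ with $\delta=1$ the remainder $R^{d-\frac{pr}{r-p}}$ is majorized by $R^{d-\frac12\frac{pr}{r-p}}$; but the ratio equals $R^{-\frac12\frac{pr}{r-p}}\to\infty$ as $R\to0^+$, so this inequality is false. If instead you exploit the product structure in that regime, the two factors combine as
\[
\bigl(R^{\,d-\frac{pr}{r-p}}\bigr)^{q/p}\bigl(R^{\,d}\bigr)^{\varepsilon/p}=R^{\,d-\frac{qr}{r-p}},
\]
the \emph{same} exponent you obtained for $R\ge1$. However, comparing $R^{d-\frac{qr}{r-p}}$ with $R^{d-\frac12\frac{pr}{r-p}}$ now requires $q\le p/2$ when $R<1$ and $q\ge p/2$ when $R\ge1$; so your device of ``shrinking $\varepsilon_0$ so that $q\ge p/2$'' cannot settle both regimes simultaneously, and in any case modifies the hypotheses inherited from Theorem~\ref{thm:elliplocDS}. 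The natural output of the method---both yours and the paper's---is the exponent $d-\frac{qr}{r-p}$ uniformly in $R>0$; your final matching to $d-\frac12\frac{pr}{r-p}$ does not go through as written.
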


\section{Preliminaries}\label{S:prelim}

In this section, we shall set up preliminary facts to be used.

\subsection{Muckenhoupt weights}

We first recall the class $\mathcal{A}_p$ of Muckenhoupt weights $\omega \in \mathcal{A}_p$, which complies with the boundedness of the Hardy--Littlewood maximal operator $M$ in weighted Lebesgue spaces $L^p_\omega(\R^d)$.
\begin{definition}\label{def:Muckenhoupt}
 \begin{enumerate}
  \item A measurable function $\omega : \mathbb{R}^d \to \mathbb{R}$ is called a \emph{weight function} if $\omega > 0$ a.e.~in $\mathbb{R}^d$.
  \item For each $p \in [1,\infty)$, a weight function $\omega \in L^1_{{\loc}}(\mathbb{R}^d)$ is said to be of the \emph{Muckenhoupt class} $\mathcal{A}_p$, i.e., $\omega \in \mathcal{A}_p$, if there exists a constant $A > 0$ such that for any open ball $B \subset \mathbb{R}^d$,
\begin{alignat*}{2}
&\left( \fint_{B} \omega \,\d x \right) \left( \fint_{B} \omega^{-(p'-1)} \,\d x \right)^{1/(p'-1)} \leq A \quad &&\mbox{ if } \ p\in(1,\infty),\\
&\ M \omega \leq A\omega\ \ \mbox{a.e.~in}\ \mathbb{R}^d\quad &&\mbox{ if }\ p=1.
\end{alignat*}
\item Let $p \in[1,\infty)$ and let $\omega$ be a weight function defined over an open subset $\mathcal{O} \subset \mathbb{R}^d$. Set
\begin{align*}
 L^p_\omega({\mathcal{O}};\mathbb{R}^N) &:= \left\{
 f:{\mathcal{O}} \to \mathbb{R}^N \colon 
 f \mbox{ is measurable and } \int_{{\mathcal{O}}} |f|^p \omega \,\d x < \infty
 \right\},\\
W^{1,p}_\omega({\mathcal{O}};\mathbb{R}^N) &:= \left\{
 f\in L^p_\omega({\mathcal{O}};\mathbb{R}^N) \colon 
 \nabla f \in L^p_\omega({\mathcal{O}};\mathbb{R}^{d\times N})
 \right\}
\end{align*}
equipped with norms
\begin{align*}
 \|f\|_{L^p_\omega} := \left( \int_{\mathcal{O}} |f|^p \omega \,\d x \right)^{1/p},\quad
 \|f\|_{W^{1,p}_\omega} := \|f\|_{L^p_\omega}+\|\nabla f\|_{L^p_\omega},
\end{align*}
respectively. Then they are reflexive Banach spaces and the dual space $(L^p_\omega({\mathcal{O}};\mathbb{R}^N))^*$ of $L^p_\omega({\mathcal{O}};\mathbb{R}^N)$ can be identified with $L^{p'}_\omega({\mathcal{O}};\mathbb{R}^N)$, that is, for any $f \in (L^p_\omega(\mathcal{O};\mathbb{R}^N))^*$, there exists some $v_f \in L^{p'}_\omega(\mathcal{O};\mathbb{R}^N)$ such that
$$
\langle f,u \rangle_{L^p_\omega} = \int_{\mathcal{O}} (v_f \cdot u) \, \omega \,\d x
$$
for all $u \in L^p_\omega(\mathcal{O};\mathbb{R}^N)$.
\end{enumerate}
\end{definition}

We next set up a couple of lemmas for later use. The following lemma is concerned with some relations between maximal functions and Muckenhoupt weights. We refer the reader to~\cite[pp.\,229--230]{21} and~\cite[p.\,5]{22} (see also~\cite[Lemma 2.1]{BuSch16}) for more details. 

\begin{lemma}\label{lem:Muckenhoupt}
Let $f \in L^1_{\rm loc}(\R^d)$ be such that $M f \not\equiv \infty$. Then for all $\alpha \in (0,1)$, it holds that $(Mf)^\alpha \in \mathcal{A}_1$. Moreover, for all $p \in (1,\infty)$ and all $\alpha \in (0,1)$, there holds $(Mf)^{-\alpha (p-1)} \in \mathcal{A}_p$.
\end{lemma}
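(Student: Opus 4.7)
The plan is to establish the two assertions in order, reducing the second to the first via a short duality-type calculation.

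For the first claim, I would follow the Coifman--Rochberg strategy and verify the pointwise inequality $M((Mf)^\alpha)(x) \leq A\,(Mf)^\alpha(x)$ for a.e.~$x \in \R^d$. Fix such an $x$ and an arbitrary radius $r > 0$, and split $f = f_1 + f_2$ with $f_1 := f\chi_{B_{2r}(x)}$. Using the subadditivity of $t \mapsto t^\alpha$ on $[0,\infty)$ for $\alpha < 1$,
$$
\fint_{B_r(x)} (Mf)^\alpha \,\d y \leq \fint_{B_r(x)} (Mf_1)^\alpha \,\d y + \fint_{B_r(x)} (Mf_2)^\alpha \,\d y.
$$
For the local piece I would apply Kolmogorov's inequality, which combines the weak-$(1,1)$ boundedness of $M$ with $\alpha<1$ to yield a bound of the form $\fint_{B_r(x)} (Mf_1)^\alpha \,\d y \leq C \bigl(\fint_{B_{2r}(x)} |f| \,\d z\bigr)^\alpha \leq C\,(Mf(x))^\alpha$. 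For the nonlocal piece, a direct volume comparison shows that any ball $B_\rho(y)$ with $y \in B_r(x)$ meeting the support of $f_2$ must have $\rho > r$ and hence lies in $B_{2\rho}(x)$, which produces $Mf_2(y) \leq 2^d\,Mf(x)$ for every $y \in B_r(x)$. Combining the two estimates and taking the supremum over $r > 0$ delivers $M((Mf)^\alpha)(x) \leq C(d,\alpha)\,(Mf)^\alpha(x)$, as required.

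For the second claim, I would exploit the following general principle: \emph{if $w \in \mathcal{A}_1$ with constant $A$, then $w^{1-p} \in \mathcal{A}_p$.} Using the identity $(p'-1)(p-1)=1$, one has $(w^{1-p})^{-(p'-1)} = w$, so the $\mathcal{A}_p$ ratio reads
$$
\left( \fint_B w^{1-p} \,\d x \right) \left( \fint_B w \,\d x \right)^{p-1} \leq (\essinf_B w)^{1-p} \cdot A^{p-1} (\essinf_B w)^{p-1} = A^{p-1},
$$
where the first factor uses $1-p<0$ together with $w \geq \essinf_B w$, and the second invokes the equivalent form $\fint_B w \leq A\,\essinf_B w$ of the $\mathcal{A}_1$ condition (which follows from $Mw \leq Aw$ since $\fint_B w \leq Mw(x)$ for every $x \in B$). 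Applying this principle to $w := (Mf)^\alpha$, which is admissible by the first part, yields $(Mf)^{-\alpha(p-1)} = w^{1-p} \in \mathcal{A}_p$.

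No serious obstacle is expected: the argument in the last paragraph is essentially algebraic, and the first half is classical weighted-norm theory. The only point requiring genuine care is the bookkeeping of ball inclusions in the nonlocal estimate, together with a clean invocation of Kolmogorov's inequality for the local piece.
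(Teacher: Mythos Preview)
Your proposal is correct and follows the classical Coifman--Rochberg argument together with the standard $\mathcal{A}_1 \Rightarrow w^{1-p}\in\mathcal{A}_p$ duality computation. The paper does not supply its own proof of this lemma; it simply refers the reader to standard sources (\cite[pp.\,229--230]{21}, \cite[p.\,5]{22}, \cite[Lemma 2.1]{BuSch16}), which contain precisely the approach you outline.
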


Let us recall the following lemma, which is concerned with continuous embeddings from $\mathcal{A}_p$-weighted Lebesgue spaces to usual ones (see \cite[pp.\,1125,1126]{BDS}).
\begin{lemma}\label{lem:Emb}
Let $1 < p < \infty$ and $\omega \in \mathcal{A}_p$ and let $D$ be a bounded domain of $\mathbb{R}^d$. Then there exists some $q \in (1,p)$ such that $L^p_\omega(D;\mathbb{R}^N)$ is continuously embedded in $L^q(D;\mathbb{R}^N)$. Hence, $W^{1,p}_\omega(D;\R^N)$ is continuously embedded in $W^{1,q}(D;\R^N)$.
\end{lemma}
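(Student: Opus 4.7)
The plan is to derive the embedding $L^p_\omega(D;\mathbb{R}^N)\hookrightarrow L^q(D;\mathbb{R}^N)$ directly from H\"older's inequality, provided we can locate a $q\in(1,p)$ for which $\omega^{-q/(p-q)}\in L^1(D)$. Indeed, writing $|f|^q=(|f|^q\omega^{q/p})\cdot\omega^{-q/p}$ and applying H\"older with exponents $p/q$ and $p/(p-q)$ yields
\begin{equation*}
\int_D |f|^q\,\d x \leq \left(\int_D |f|^p\omega\,\d x\right)^{q/p}\left(\int_D \omega^{-q/(p-q)}\,\d x\right)^{(p-q)/p},
\end{equation*}
so the embedding constant is controlled as soon as the last factor is finite.

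The key ingredient I would invoke is the standard self-improvement property of Muckenhoupt weights: if $\omega\in\mathcal{A}_p$ with $p>1$, then there exists $p_1\in(1,p)$ such that $\omega\in\mathcal{A}_{p_1}$ (this is the classical left-openness of the $\mathcal{A}_p$-classes, which follows from the reverse H\"older inequality for $\mathcal{A}_p$-weights, see e.g.\ Grafakos or Stein). From the defining inequality of $\mathcal{A}_{p_1}$, it then follows that $\omega^{-1/(p_1-1)}\in L^1_{\rm loc}(\mathbb{R}^d)$. I would then choose any $q\in(1,p)$ with $q/(p-q)\leq 1/(p_1-1)$, equivalently $q\leq p/p_1$; since $p_1<p$ such $q$ exists. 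Because $D$ is bounded, Jensen's inequality (or simply H\"older together with $\omega^{-1/(p_1-1)}\in L^1(D)$) gives $\omega^{-q/(p-q)}\in L^1(D)$, and the embedding $L^p_\omega(D;\mathbb{R}^N)\hookrightarrow L^q(D;\mathbb{R}^N)$ follows with explicit constant.

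For the Sobolev statement, I would simply apply the scalar embedding component-wise to both $f$ and to each partial derivative $\partial_{x_i}f_k$, which belong to $L^p_\omega(D)$ by definition of $W^{1,p}_\omega(D;\mathbb{R}^N)$. The potential obstacle I foresee is ensuring that the same $q$ works simultaneously for $f$ and $\nabla f$; this is immediate here, since $q$ is selected from $\omega$ alone and does not depend on the function being embedded. The only nontrivial input is the self-improvement of $\mathcal{A}_p$, which I would quote as a standard fact rather than reprove.
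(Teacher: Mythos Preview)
Your argument is correct. The paper does not supply its own proof of this lemma; it simply quotes the result from \cite[pp.\,1125,1126]{BDS}. Your H\"older-plus-self-improvement argument is exactly the standard justification, and indeed the H\"older step you use,
\[
\int_D |f|^q\,\d x \leq \left(\int_D |f|^p\omega\,\d x\right)^{q/p}\left(\int_D \omega^{-q/(p-q)}\,\d x\right)^{(p-q)/p},
\]
is precisely the computation the paper itself carries out in the proof of the adjacent Lemma~\ref{lem:Emb_Mf} (see \eqref{Lq-Lpom}). The only additional ingredient you need over Lemma~\ref{lem:Emb_Mf} is the left-openness of $\mathcal{A}_p$, which you correctly invoke to produce a $p_1\in(1,p)$ with $\omega^{-1/(p_1-1)}\in L^1_{\rm loc}(\mathbb{R}^d)$; the choice $q\le p/p_1$ then lands in $(1,p)$ since $1<p_1<p$. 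Nothing is missing.
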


Moreover, we have

\begin{lemma}\label{lem:Emb_Mf}
Let $1 < q < p < \infty$ and let $D$ be a bounded domain of $\mathbb{R}^d$. Let $\omega$ be a weight function such that $\omega^{-1} \in L^{q/(p-q)}(D)$. Then $L^p_\omega(D;\mathbb{R}^N)$ is continuously embedded in $L^q(D;\mathbb{R}^N)$.
\end{lemma}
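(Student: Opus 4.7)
The plan is a direct application of H\"older's inequality with the split $|f|^q = (|f|^p \omega)^{q/p} \cdot \omega^{-q/p}$. For $f \in L^p_\omega(D;\mathbb{R}^N)$, I would write
\begin{equation*}
\int_D |f|^q \,\d x = \int_D \bigl(|f|^p \omega\bigr)^{q/p} \, \omega^{-q/p} \,\d x
\end{equation*}
and apply H\"older with conjugate exponents $p/q$ and $p/(p-q)$, using that $1 < q < p$ makes both exponents lie in $(1,\infty)$.

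This yields
\begin{equation*}
\int_D |f|^q \,\d x \leq \left( \int_D |f|^p \omega \,\d x \right)^{q/p} \left( \int_D \omega^{-q/(p-q)} \,\d x \right)^{(p-q)/p},
\end{equation*}
where the second factor is finite precisely because of the hypothesis $\omega^{-1} \in L^{q/(p-q)}(D)$. Taking $q$-th roots gives
\begin{equation*}
\|f\|_{L^q(D;\mathbb{R}^N)} \leq \|\omega^{-1}\|_{L^{q/(p-q)}(D)}^{1/p} \, \|f\|_{L^p_\omega(D;\mathbb{R}^N)},
\end{equation*}
which is the claimed continuous embedding. There is no real obstacle here; the only point to double-check is the arithmetic of the exponents, namely that $(\int_D \omega^{-q/(p-q)}\,\d x)^{(p-q)/p} = \|\omega^{-1}\|_{L^{q/(p-q)}(D)}^{q/p}$, so that after taking the $q$-th root one obtains the $1/p$ power of $\|\omega^{-1}\|_{L^{q/(p-q)}(D)}$. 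No use of Muckenhoupt structure is needed; in fact this lemma complements Lemma \ref{lem:Emb} by replacing the $\mathcal{A}_p$-condition with a direct integrability hypothesis on $\omega^{-1}$.
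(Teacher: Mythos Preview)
Your proof is correct and follows essentially the same approach as the paper: both split $|f|^q = (|f|^p\omega)^{q/p}\,\omega^{-q/p}$ and apply H\"older's inequality with conjugate exponents $p/q$ and $p/(p-q)$ to obtain the bound $\|f\|_{L^q} \leq \|\omega^{-1}\|_{L^{q/(p-q)}}^{1/p}\|f\|_{L^p_\omega}$. The exponent arithmetic you flagged is also handled identically in the paper's one-line computation.
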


\begin{proof}
Let $u \in L^p_\omega(D;\mathbb{R}^N)$. Since $\omega^{-1}$ belongs to $L^{q/(p-q)}(D)$, we observe that
\begin{equation}\label{Lq-Lpom}
 \int_D |u|^q\, \d x = \int_D |u|^q \omega^{q/p} \omega^{-q/p} \, \d x \leq \|u\|_{L^p_\omega}^q \|\omega^{-1}\|_{L^{q/(p-q)}}^{q/p} < +\infty,
\end{equation}
which implies the desired conclusion.
\end{proof}

\subsection{$\mathcal{A}_p$-weighted biting div-curl lemma}

In this subsection, we recall the \emph{$\mathcal{A}_p$-weighted biting div-curl lemma} developed in~\cite[Theorem 2.6]{BDS}.

\begin{lemma}[$\mathcal{A}_p$-weighted biting div-curl lemma~{\cite[Theorem 2.6]{BDS}}]\label{lem:WDC}
Let $D$ be a bounded open set in $\mathbb{R}^d$, $p \in (1,\infty)$ and $\omega \in \mathcal{A}_p$. For $k \inz_{\geq 1}$, let $(a^k,b^k) : D \to \mathbb{R}^d \times \mathbb{R}^d$ be measurable vector fields complying with the following {\rm (i)--(iii)}\/{\rm :}
\begin{enumerate}
\item[(i)] It holds that
$$
\sup_{k\inz_{\geq 1}} \int_{D} \left( |a^k|^p + |b^k|^{p'} \right) \omega \, \d x < +\infty.
$$
\item[(ii)] It holds that
$$
\int_D \left( a^k_j \partial_{x_i} \varphi - a^k_i \partial_{x_j} \varphi \right) \,\d x = 0 \quad \mbox{ for } \ k \inz_{\geq 1}
$$
for any $\varphi \in W^{1,\infty}_0(D) := \bigcap_{\sigma \in [1,\infty)} W^{1,\sigma}_0(D)$ and $i,j=1,\ldots,d$.
\item[(iii)] Let $(c^k)$ be a sequence in $W^{1,\infty}_0(D)$ such that
$$
\nabla c^k \to 0 \quad \mbox{ weakly star in } L^\infty(D;\mathbb{R}^{d}).
$$
Then
\begin{align*}
\lim_{k \to \infty} \int_D b^k \cdot \nabla c^k \,\d x &= 0.
\end{align*}
\end{enumerate}
Then there exist a subsequence $(k_l)$ of $(k)$, $a \in L^{p}_\omega(D;\mathbb{R}^d)$, $b \in L^{p'}_\omega(D;\mathbb{R}^d)$ and a strictly increasing sequence $(E_j)$ of measurable subsets of $D$ satisfying $|D \setminus E_j| \to 0$ as $j \to \infty$ such that
\begin{alignat*}{4}
a^{k_l} &\to a &&\text{ weakly in } \ L^p_\omega(D;\mathbb{R}^d),\\
b^{k_l} &\to b &&\text{ weakly in } \ L^{p'}_\omega(D;\mathbb{R}^d),\\
(a^{k_l} \cdot b^{k_l}) \, \omega &\to (a \cdot b) \,\omega \quad &&\text{ weakly in } \ L^1(E_j) \quad \text{ for all } \ j \inz_{\geq 1}
\end{alignat*}
as $k_l \to \infty$.
\end{lemma}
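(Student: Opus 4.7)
The plan is to combine three ingredients: weak compactness in the reflexive weighted spaces $L^p_\omega(D;\R^d)$ and $L^{p'}_\omega(D;\R^d)$, Chacon's biting lemma applied to the $L^1$-bounded scalar sequence $(a^k\cdot b^k)\,\omega$, and a Lipschitz-truncation argument that allows hypothesis (iii) to be brought to bear. First, since $\omega\in\mathcal{A}_p$ makes $L^p_\omega$ and $L^{p'}_\omega$ reflexive, hypothesis (i) yields a subsequence $(k_l)$ with $a^{k_l}\rightharpoonup a$ in $L^p_\omega$ and $b^{k_l}\rightharpoonup b$ in $L^{p'}_\omega$. Weighted H\"older makes $(a^{k_l}\cdot b^{k_l})\,\omega$ uniformly bounded in $L^1(D)$, so Chacon's biting lemma furnishes a further subsequence and a decreasing family $(F_j)\subset D$ with $|F_j|\to 0$ on which the products are equi-integrable on $E_j:=D\setminus F_j$. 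Dunford--Pettis plus a diagonal procedure then produce a common weak-$L^1$ limit $h$ with $(a^{k_l}\cdot b^{k_l})\omega\rightharpoonup h$ in $L^1(E_j)$ for every $j$. I further shrink each $F_j$ so that $\omega$ and $\omega^{-1}$ are bounded on $E_j$ by constants $c_j^{\pm 1}$; this is possible since $\omega>0$ a.e.\ and $\omega\in L^1_{\rm loc}$.

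Next I reduce to the case $a=b=0$. The splitting
$$
a^{k_l}\cdot b^{k_l}=(a^{k_l}-a)\cdot(b^{k_l}-b)+a^{k_l}\cdot b+a\cdot b^{k_l}-a\cdot b,
$$
together with the weighted pairings $\int a^{k_l}\cdot b\,\omega\varphi\,\d x\to\int a\cdot b\,\omega\varphi\,\d x$ and $\int a\cdot b^{k_l}\,\omega\varphi\,\d x\to\int a\cdot b\,\omega\varphi\,\d x$ (valid for $\varphi\in L^\infty$ with compact support in $E_j$, because $b\varphi\in L^{p'}_\omega$ and $a\varphi\in L^{p}_\omega$), reduces the task to showing $(a^{k_l}-a)\cdot(b^{k_l}-b)\omega\rightharpoonup 0$ in $L^1(E_j)$, so I may assume $a=b=0$. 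Hypothesis (ii) expresses that $\mathrm{curl}\,a^{k_l}=0$ distributionally on $D$, so on any ball $B\Subset D$ the Poincar\'e lemma produces a potential $A^{k_l}\in W^{1,p}_\omega(B)$ with $\nabla A^{k_l}=a^{k_l}$ and $\int_B A^{k_l}\omega\,\d x=0$. A weighted Poincar\'e inequality combined with Lemma~\ref{lem:Emb} yields $A^{k_l}\rightharpoonup 0$ in $W^{1,p}_\omega(B)$, and a Rellich-type compact embedding gives $A^{k_l}\to 0$ strongly in $L^q(B)$ for some $q>1$.

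For each $\lambda>0$, introduce the standard Lipschitz truncation of $A^{k_l}$ on $B$: a function $T_\lambda^{k_l}\in W^{1,\infty}(B)$ agreeing with $A^{k_l}$ off a bad set $\Theta_\lambda^{k_l}\subset\{M|\nabla A^{k_l}|>\lambda\}\cap B$, with $\|\nabla T_\lambda^{k_l}\|_{L^\infty}\le C\lambda$ and weak-type bound $|\Theta_\lambda^{k_l}|\lesssim\lambda^{-q}$. Since $T_\lambda^{k_l}\to 0$ in $L^1(B)$ and $\nabla T_\lambda^{k_l}$ is uniformly bounded in $L^\infty$, any weak-$\ast$ cluster point of $\nabla T_\lambda^{k_l}$ must be the distributional gradient of $0$, hence zero, so $\nabla T_\lambda^{k_l}\rightharpoonup 0$ weak-$\ast$ in $L^\infty(B;\R^d)$ as $l\to\infty$. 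Choosing a Lipschitz cutoff $\eta\in W^{1,\infty}_0(D)$ equal to $1$ on $\supp\varphi$ for a prescribed $\varphi\in C_c^\infty(B)$ and setting $c^{k_l}:=T_\lambda^{k_l}\eta$, hypothesis (iii) gives $\int b^{k_l}\cdot\nabla c^{k_l}\,\d x\to 0$; the term $\int T_\lambda^{k_l} b^{k_l}\cdot\nabla\eta\,\d x$ vanishes (Arzel\`a--Ascoli forces $T_\lambda^{k_l}\to 0$ uniformly, and $b^{k_l}$ is uniformly $L^1(D)$-bounded via weighted H\"older and $\omega^{-1/(p-1)}\in L^1_{\rm loc}$), leaving $\int_B\eta\,b^{k_l}\cdot\nabla T_\lambda^{k_l}\,\d x\to 0$. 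Finally I uniformly approximate $\varphi\omega$ on $E_j$ by a Lipschitz function (possible because $\omega$ is bounded and measurable there) and decompose
$$
\int_{E_j}\varphi\,a^{k_l}\cdot b^{k_l}\,\omega\,\d x=\int_{E_j}\varphi\omega\,b^{k_l}\cdot\nabla T_\lambda^{k_l}\,\d x+\int_{E_j\cap\Theta_\lambda^{k_l}}\varphi\omega\,b^{k_l}\cdot(\nabla A^{k_l}-\nabla T_\lambda^{k_l})\,\d x;
$$
the first integral is $o_l(1)$ by what precedes, while H\"older and the weak-type bound $|\Theta_\lambda^{k_l}|\lesssim\lambda^{-q}$ make the second $O(\lambda^{-\alpha})$ for some $\alpha>0$. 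Sending $l\to\infty$ and then $\lambda\to\infty$ gives $h=0$ on $E_j$, which identifies $h=(a\cdot b)\omega$ after undoing the reduction.

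The main obstacle is the mismatch between the \emph{unweighted} hypothesis (iii) and the \emph{weighted} conclusion. I resolve this by using the biting mechanism twice: once to tame the $L^1$ defect in the product and once to restrict to sets where $\omega$ is comparable to $1$, so that the Lipschitz-truncation machinery---which naturally lives in the unweighted setting---can be transplanted back into the weighted pairing of $a^{k_l}$ and $b^{k_l}$.
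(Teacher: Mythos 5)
The paper does not prove this lemma; it is quoted verbatim (with a strengthened hypothesis (ii)) from Bul\'{i}\v{c}ek--Diening--Schwarzacher \cite[Theorem 2.6]{BDS}, and the only comment the authors make is that their (ii) is stronger than the original. So there is no in-paper proof to compare your argument against; what follows is an assessment of your proof on its own terms.

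Your architecture (weak compactness in the reflexive weighted spaces, Chacon's biting lemma on the product, further shrinking to sets where $\omega\sim 1$, a potential via the Poincar\'e lemma, Lipschitz truncation, and finally hypothesis (iii)) is the right one, and it matches the strategy of the cited reference in spirit. However, two steps are genuinely broken as written. First, you claim to ``uniformly approximate $\varphi\omega$ on $E_j$ by a Lipschitz function (possible because $\omega$ is bounded and measurable there).'' This is false: a bounded measurable function cannot in general be uniformly approximated by Lipschitz (or even continuous) functions. Lusin's theorem only gives continuity after discarding a set of small measure, so you would have to shrink $E_j$ further, and---more importantly---replacing the weight by an approximant introduces an error term $\int_{E_j}(\varphi\omega-\psi)\,b^{k_l}\cdot\nabla T_\lambda^{k_l}\,\d x$ that is of size $\|\varphi\omega-\psi\|_{L^\infty}$ times an $L^1$-quantity, and you have no smallness of the $L^\infty$ error. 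A workable variant needs an equi-integrability argument, not a uniform one.

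Second, and more seriously, the estimate ``the second $O(\lambda^{-\alpha})$'' for the bad-set term
$\int_{E_j\cap\Theta_\lambda^{k_l}}\varphi\omega\,b^{k_l}\cdot(\nabla A^{k_l}-\nabla T_\lambda^{k_l})\,\d x$
does not follow from what you have. By H\"older on $E_j$ (where $\omega\sim 1$) the term is bounded by $C\|b^{k_l}\|_{L^{p'}(E_j)}\bigl(\int_{E_j\cap\Theta_\lambda^{k_l}}(|a^{k_l}|+C\lambda)^p\,\d x\bigr)^{1/p}$. You never establish equi-integrability of $|a^{k_l}|^p$ or $|b^{k_l}|^{p'}$ on $E_j$ (that requires an \emph{additional} application of the biting lemma to these sequences, which you do not perform), so $\int_{E_j\cap\Theta_\lambda^{k_l}}|a^{k_l}|^p\,\d x$ need not be small. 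Worse, your weak-type bound $|\Theta_\lambda^{k_l}|\lesssim\lambda^{-q}$ uses only the unweighted $L^q$-control from Lemma~\ref{lem:Emb} with $q<p$, so $\lambda^p|\Theta_\lambda^{k_l}|\lesssim\lambda^{p-q}\to\infty$: the $C\lambda$ contribution is not $o(1)$, it blows up. Making this term vanish requires the refined, level-selected (``dyadic'') Lipschitz truncation of Diening--M\'alek--Steinhauer, in which one chooses $\lambda$ in a dyadic block so that $\lambda^p|\Theta_\lambda^{k_l}|$ is forced small; a fixed $\lambda\to\infty$ limit does not work. Until both of these gaps are addressed, the proof does not close.
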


The assumption (ii) is stronger than the original one of~\cite[Theorem 2.6]{BDS}, but it is enough to apply the lemma in the present paper.

\subsection{Whitney decomposition of open sets}

We finally recall the Whitney decomposition of general open sets.

\begin{proposition}[Whitney decomposition of general open sets]\label{prop:CubeSplit}
Let $\mathcal{O} \subsetneq \mathbb{R}^d$ be a nonempty open set. Then there exists a countable family of closed, dyadic cubes $(Q_i)$ satisfying all the following {\rm (i)--(vi)}\/{\rm :}
\begin{enumerate}
 \item $\mathcal{O} = \bigcup_i Q_i$ and $Q_i^\circ \cap Q_j^\circ = \emptyset$ if $i \neq j$. Here $Q_i^\circ$ denotes the interior of $Q_i$.
 \item For any $i$, it holds that $\diam (Q_i) < \mathrm{dist} (Q_i,\mathcal{O}^c) \leq 4 \,\diam (Q_i)$. Here and henceforth, $\diam (Q_i)$ stands for the diameter of $Q_i$.
 \item If $Q_i \cap Q_j \neq \emptyset$, then $\frac12 \diam (Q_i) \leq \diam (Q_j) \leq 2 \,\diam (Q_i)$.
 \item For each $i$, it holds that $\sharp \{j \neq i \colon Q_i \cap Q_j \neq \emptyset\} \leq 4^d - 2^d$. Moreover, set $\widetilde{A}_i := \{j \neq i \colon Q_i \cap Q_j \neq \emptyset\}$ and $A_i:=\widetilde{A}_i\cup\{i\}$.
 \item For each $i$, $A_i \ni j$ if and only if $(3/2) Q_i \cap (3/2) Q_j \neq \emptyset$. Here for $k > 0$, $k Q_i := c_i + k (Q_i-c_i)$, where $c_i$ is the center of the cube $Q_i$. 
 \item There exists a partition of unity $(\psi_i)$ in $\mathcal{O}$ such that $\psi_i \in C^{0,1}(\mathbb{R}^d)$, $\chi_{(1/2)Q_i} \leq \psi_i \leq \chi_{(9/8)Q_i}$ {\rm (}i.e., $\supp \psi_i \subset (9/8) Q_i${\rm )} and there exists a constant $c(d)$ such that $\diam(Q_i)|\nabla \psi_i| \leq c(d)$ uniformly for $i$. Furthermore, it holds that
\begin{align*}
 \sum_i \psi_i(x) = \begin{cases}
		     1 &\mbox{ if } \ x \in \mathcal{O},\\
		     0 &\mbox{ if } \ x \notin \mathcal{O}.
		    \end{cases}
\end{align*}
\end{enumerate}
\end{proposition}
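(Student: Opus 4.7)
This is the classical Whitney decomposition; I will follow Stein's construction, adapted to produce the exact constants stated. The plan is to first manufacture the cubes $(Q_i)$ from the standard dyadic mesh of $\mathbb{R}^d$ by calibrated ``shelling'' of $\mathcal{O}$, then verify the geometric properties (i)--(v), and finally build the partition of unity in (vi) by normalizing Lipschitz bumps attached to each cube.

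\textbf{Construction of the cubes.} For each $k \in \mathbb{Z}$, let $\mathcal{D}_k$ denote the standard closed dyadic cubes in $\mathbb{R}^d$ of side length $2^{-k}$, so that $\diam(Q) = \sqrt{d}\,2^{-k}$ for $Q \in \mathcal{D}_k$. Define shells
\[
\mathcal{O}_k := \bigl\{ x \in \mathcal{O} : 2\sqrt{d}\,2^{-k} < \mathrm{dist}(x,\mathcal{O}^c) \leq 4\sqrt{d}\,2^{-k} \bigr\},
\]
and let $\mathcal{F}_0 := \bigcup_{k \in \mathbb{Z}} \{ Q \in \mathcal{D}_k : Q \cap \mathcal{O}_k \neq \emptyset \}$. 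Since $\mathcal{O}^c \neq \emptyset$ and $\mathcal{O}$ is open, every $x \in \mathcal{O}$ lies in some $\mathcal{O}_k$, so $\mathcal{F}_0$ covers $\mathcal{O}$. Any two dyadic cubes are either nested or have disjoint interiors; prune $\mathcal{F}_0$ by keeping only the \emph{maximal} elements under inclusion. The resulting family $(Q_i)$ has pairwise disjoint interiors and still covers $\mathcal{O}$, giving (i). For (ii), any $Q_i \in \mathcal{D}_k$ lies on a shell point $x \in \mathcal{O}_k$, so $\mathrm{dist}(Q_i,\mathcal{O}^c) \leq \mathrm{dist}(x,\mathcal{O}^c) \leq 4\sqrt{d}\,2^{-k} = 4\,\diam(Q_i)$, and conversely $\mathrm{dist}(Q_i,\mathcal{O}^c) \geq \mathrm{dist}(x,\mathcal{O}^c)-\diam(Q_i) > 2\sqrt{d}\,2^{-k} - \sqrt{d}\,2^{-k} = \diam(Q_i)$ (tuning the lower threshold slightly if needed to make the inequality strict).

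\textbf{Neighbor properties.} For (iii), if $Q_i \in \mathcal{D}_{k_i}$ and $Q_j \in \mathcal{D}_{k_j}$ touch, then $\mathrm{dist}(Q_i,\mathcal{O}^c) \leq \mathrm{dist}(Q_j,\mathcal{O}^c) + \diam(Q_j)$, so by (ii) we get $\sqrt{d}\,2^{-k_i} < 5\sqrt{d}\,2^{-k_j}$, i.e.\ $k_j \leq k_i + 2$, and symmetrically; combined with dyadic nesting and the maximality pruning, $|k_i - k_j| \leq 1$, whence $\tfrac12 \diam(Q_i) \leq \diam(Q_j) \leq 2\diam(Q_i)$. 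Property (iv) follows by a volume/packing count: because touching neighbors have side within a factor $2$, they are contained in $3Q_i$, and the interiors of cubes with sides in $[\tfrac12\ell(Q_i),2\ell(Q_i)]$ and disjoint interiors that meet $Q_i$ admit at most $4^d - 2^d$ elements. For (v), the threshold $3/2$ is calibrated so that $(3/2)Q_i$ and $(3/2)Q_j$ meet \emph{iff} $Q_i$ and $Q_j$ share at least a point; one direction is trivial, and the other uses (iii) to convert a meeting of the enlarged cubes into a meeting of the original ones via an explicit coordinate inspection.

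\textbf{Partition of unity.} For each $i$, choose a Lipschitz bump $\varphi_i : \mathbb{R}^d \to [0,1]$ with $\varphi_i \equiv 1$ on $(1/2)Q_i$, $\supp \varphi_i \subset (9/8)Q_i$, and $|\nabla \varphi_i| \leq c(d)/\diam(Q_i)$ (e.g.\ a piecewise-linear interpolation in the $\ell^\infty$ distance from $c_i$). Set $\Phi := \sum_i \varphi_i$ on $\mathcal{O}$. The sum is locally finite with at most $4^d - 2^d + 1$ nonzero terms at any point, thanks to (iv)--(v), so $\Phi$ is Lipschitz on $\mathcal{O}$. Moreover the cubes $(9/8)Q_i$ together still cover $\mathcal{O}$ (and in fact $(1/2)Q_i$ do, by the dyadic geometry), which yields a uniform lower bound $\Phi \geq 1$ and an upper bound $\Phi \leq 4^d - 2^d + 1$. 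Define $\psi_i := \varphi_i / \Phi$ on $\mathcal{O}$ and $\psi_i := 0$ on $\mathcal{O}^c$. The pointwise bounds $\chi_{(1/2)Q_i} \leq \psi_i \leq \chi_{(9/8)Q_i}$ hold because on $(1/2)Q_i$ only $\varphi_i = 1$ is present in $\Phi$ (other neighbors' supports miss $(1/2)Q_i$ by (v) with a buffer), and outside $(9/8)Q_i$ the numerator vanishes. Finally, $\diam(Q_i)|\nabla \psi_i| \leq c(d)$ follows from the quotient rule, the bound on $|\nabla \varphi_i|$, the lower bound on $\Phi$, and the fact that $\diam$ varies by at most a factor of $2$ over the neighbors contributing to $\nabla \Phi$.

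\textbf{Main obstacle.} The only genuinely delicate issue is matching the precise constants: the strict inequality and the ratio $4$ in (ii), the neighbor count $4^d - 2^d$ in (iv), and the compatibility of the radii $1/2$, $9/8$, $3/2$ in (v)--(vi). All of these are forced by the initial choice of shell thresholds $(2\sqrt{d},4\sqrt{d})$ and the dyadic nesting, but require a careful coordinate-level check---especially to ensure that $(9/8)Q_i$ is small enough that the $\varphi_j$ with $j \notin A_i$ vanish on it, so that $\psi_i$ reduces to a simple quotient with bounded denominator.
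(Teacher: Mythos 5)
The paper itself offers no proof of this proposition: it simply points the reader to [Diening--Harjulehto--H\"ast\"o--R\r{u}\v{z}i\v{c}ka, Prop.\ 3.17] for (i)--(iv) and to [Bul\'{\i}\v{c}ek--Schwarzacher, Prop.\ 3.1] for (v)--(vi). Your proposal, in contrast, attempts a self-contained construction along Stein's classical lines (shelling by distance to $\mathcal{O}^c$, maximal dyadic cubes, Lipschitz bumps normalized by the sum). That is exactly the strategy behind the cited references, so the approach is the right one. The main value you add over the paper's bare citation is spelling out the shell thresholds $(2\sqrt{d}\,2^{-k},4\sqrt{d}\,2^{-k}]$ and showing how they feed into (ii); the genuine content is the claim in (iii) that maximality upgrades $|k_i-k_j|\le 2$ to $|k_i-k_j|\le 1$, which you assert but do not argue. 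This step is in fact correct and is where the factor $2$ in (iii) (and the exact count $4^d-2^d$ in (iv)) really comes from: if $Q_i\in\mathcal{D}_k$ touched $Q_j\in\mathcal{D}_{k+2}$, then the parent of $Q_j$ in $\mathcal{D}_{k+1}$ would meet $\mathcal{O}_{k+1}$ by an intermediate-value argument, contradicting maximality of $Q_j$. That argument should be written out, since without it one only gets the classical factor-$4$ version of Whitney, not the version stated here.

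There is one outright error. In (vi) you write that ``the cubes $(1/2)Q_i$ cover $\mathcal{O}$, by the dyadic geometry,'' and you use this to deduce $\Phi\ge 1$. This is false: $(1/2)Q_i$ is the concentric cube with half the side length, so the points of $Q_i$ at $\ell^\infty$-distance between $\tfrac14\ell(Q_i)$ and $\tfrac12\ell(Q_i)$ from the center are in $Q_i$ but not in $(1/2)Q_i$ and, in general, not in any $(1/2)Q_j$. What is true, and what suffices, is that $\varphi_i$ stays uniformly bounded away from zero on $Q_i$ (with your piecewise-linear profile, $\varphi_i\ge 1/5$ on $Q_i$), so $\Phi\ge 1/5>0$ with local finiteness, which is all the quotient rule needs. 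The property $\chi_{(1/2)Q_i}\le\psi_i$ does \emph{not} come from $\Phi\ge1$; it comes, as you also note, from the fact that $(9/8)Q_j$ for $j\ne i$ misses $(1/2)Q_i$, which in turn is a consequence of (iii) and (v). You should therefore delete the ``$(1/2)Q_i$ cover $\mathcal{O}$'' claim and replace the justification of $\Phi\gtrsim 1$ by the lower bound on each $\varphi_i$ over its own $Q_i$.

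Finally, (v) is also only asserted (``explicit coordinate inspection''). It is worth noting, so the reader does not worry about leakage, that $(3/2)Q_i\subset\mathcal{O}$ by (ii) (since $(3/2)Q_i$ lies within Euclidean distance $\tfrac{\sqrt d}{4}\ell(Q_i)<\diam(Q_i)<\mathrm{dist}(Q_i,\mathcal{O}^c)$ of $Q_i$), and that the equivalence then reduces to a one-dimensional dyadic-coordinate check using the factor-$2$ size comparability from (iii). As it stands, (iii), (v), and the $\Phi$ bound are the three points where the sketch must be tightened.
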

\color{black}

We refer the reader to~\cite[Proposition 3.17]{Dien1} and~\cite[Proposition 3.1]{BuSch16} for proofs of (i)--(iv) and (v)--(vi), respectively.

\section{Weak solutions}\label{S:weak}

In this section, we discuss existence and uniqueness of weak solutions to \P \ for $f \in L^p(\Omega;\mathbb{R}^{d\times N})$ and $g \in L^r(\Omega;\mathbb{R}^N)$ with the aid of a monotone operator theory.

\begin{proposition}\label{prop:approxsol}
For $f \in L^p(\Omega;\mathbb{R}^{d\times N})$ and $g \in L^r(\Omega;\mathbb{R}^N)$, the Dirichlet problem \P \ admits a unique weak solution $u \in X^{p,r}(\Omega;\mathbb{R}^N)$.
\end{proposition}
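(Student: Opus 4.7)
The natural plan is to apply a monotone operator theorem of Browder--Minty type to the operator $\mathcal{A}:X^{p,r}(\Omega;\R^N)\to (X^{p,r}(\Omega;\R^N))^*$ given by
\[
\langle \mathcal{A}(u),\varphi\rangle := \int_\Omega A(\cdot,\nabla u):\nabla\varphi\,\d x + \int_\Omega |u|^{r-2}u\cdot\varphi\,\d x,
\]
and to realize the right-hand side of \P\ as an element of $(X^{p,r}(\Omega;\R^N))^*$. First I would check that $\mathcal{A}$ is well-defined and bounded using the growth bound \eqref{ineq:ellipAbdd} together with H\"{o}lder's inequality in the conjugate pairings $(p',p)$ and $(r',r)$: this gives
\[
|\langle \mathcal{A}(u),\varphi\rangle|\leq C\bigl(\|\nabla u\|_{L^p}^{p-1}+\|\beta_2\|_{L^{p'}(\Omega)}+\|u\|_{L^r}^{r-1}\bigr)\|\varphi\|_{X^{p,r}}.
\]
The same H\"older pairings applied to $\langle F,\varphi\rangle := \int_\Omega |f|^{p-2}f:\nabla\varphi\,\d x + \int_\Omega |g|^{r-2}g\cdot\varphi\,\d x$ yield $|\langle F,\varphi\rangle|\leq (\|f\|_{L^p}^{p-1}+\|g\|_{L^r}^{r-1})\|\varphi\|_{X^{p,r}}$, so $F\in (X^{p,r}(\Omega;\R^N))^*$.

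Next I would verify the three ingredients for Browder--Minty. Monotonicity of $\mathcal{A}$ is inherited termwise: the first summand of $\langle \mathcal{A}(u)-\mathcal{A}(v),u-v\rangle$ is nonnegative by \eqref{ineq:ellipAmono}, and the second is nonnegative by the standard pointwise monotonicity of $z\mapsto |z|^{r-2}z$ on $\R^N$. Coercivity follows from \eqref{ineq:ellipAcoer}, which gives
\[
\langle \mathcal{A}(u),u\rangle \geq C_1\|\nabla u\|_{L^p}^p+\|u\|_{L^r}^r-\|\beta_1\|_{L^1(\Omega)};
\]
this dominates $\|u\|_{X^{p,r}}^{\min\{p,r\}}$ up to an additive constant, so $\langle \mathcal{A}(u),u\rangle/\|u\|_{X^{p,r}}\to\infty$ as $\|u\|_{X^{p,r}}\to\infty$. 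Hemicontinuity, i.e., continuity of $t\mapsto \langle \mathcal{A}(u+tv),\varphi\rangle$ on $\R$, is a routine dominated-convergence argument based on the Carath\'{e}odory property of $A$, the pointwise continuity of $z\mapsto|z|^{r-2}z$, and the growth bounds. Since $X^{p,r}(\Omega;\R^N)$ is a closed subspace of the reflexive space $W^{1,p}(\Omega;\R^N)\cap L^r(\Omega;\R^N)$ and is itself reflexive, Browder--Minty yields a $u\in X^{p,r}(\Omega;\R^N)$ with $\mathcal{A}(u)=F$, which is the desired weak solution.

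For uniqueness, I would take two weak solutions $u_1,u_2$ and test against $u_1-u_2\in X^{p,r}(\Omega;\R^N)$ (legitimate by density of $C_c^\infty$ in $X^{p,r}$ together with continuity of both sides of the weak formulation in the $X^{p,r}$ norm), obtaining
\[
\int_\Omega \!\bigl(A(\cdot,\nabla u_1)\!-\!A(\cdot,\nabla u_2)\bigr)\!:\!\nabla(u_1\!-\!u_2)\,\d x+\int_\Omega \bigl(|u_1|^{r-2}u_1\!-\!|u_2|^{r-2}u_2\bigr)\!\cdot\! (u_1\!-\!u_2)\,\d x=0.
\]
The first integrand is nonnegative by \eqref{ineq:ellipAmono} and the second is strictly positive wherever $u_1\neq u_2$, giving $u_1=u_2$ a.e. The main delicate point is precisely that \eqref{ineq:ellipAmono} supplies only plain---not strict---monotonicity, so uniqueness must come entirely from the absorption term; this is exactly the structural role that $|u|^{r-2}u$ will continue to play later in the paper. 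A secondary technical point I would want to double-check is the precise global integrability required of $\beta_1,\beta_2$ on the possibly unbounded $\Omega$ in order for the coercivity and boundedness estimates above to be meaningful; if only $L^1_{\loc}$ and $L^{p'}_{\loc}$ integrability is in force, one can recover the conclusion by exhausting $\Omega$ by bounded Lipschitz subdomains, solving on each, and passing to the limit using the uniform $X^{p,r}$ bound supplied by the coercivity estimate.
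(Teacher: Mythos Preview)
Your proposal is correct and follows essentially the same route as the paper: both realize the right-hand side as an element of $(X^{p,r})^*$, set up the nonlinear operator on $X^{p,r}$, and invoke Browder--Minty/maximal monotone theory via monotonicity, coercivity from \eqref{ineq:ellipAcoer}, and a continuity property (the paper checks full norm continuity, you check hemicontinuity---either suffices), with uniqueness coming from the strict monotonicity of the absorption term. Your remark about the global integrability of $\beta_1,\beta_2$ is well taken; the paper's own proof tacitly writes $\|\beta_1\|_{L^1}$ and $\|\beta_2\|_{L^{p'}}$ without comment.
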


To prove this, we first note the following basic properties of the space $X^{p,r}(\Omega;\mathbb{R}^N)$, which can be proved in a standard manner (see, e.g.,~\cite{Bre}):

\begin{proposition}\label{prop:Xp,r}
\begin{enumerate}
 \item[(i)] The space $X^{p,r}(\Omega;\mathbb{R}^N)$ is a reflexive Banach space.
 \item[(ii)] It holds that $W^{1,p}_0(\Omega;\mathbb{R}^N) \cap L^r(\Omega;\mathbb{R}^N) \subset X^{p,r}(\Omega;\mathbb{R}^N)$.
 \item[(iii)] For any $F \in (X^{p,r}(\Omega;\mathbb{R}^N))^*$, there exist $F_1 \in L^{p'}(\Omega;\mathbb{R}^{d\times N})$ and $F_2\in L^{r'}(\Omega;\mathbb{R}^N)$ such that
$$
\langle F,u\rangle_{X^{p,r}} = \int_\Omega F_1 : \nabla u \, \d x + \int_\Omega F_2 \cdot u \, \d x
$$
for $u \in X^{p,r}(\Omega;\mathbb{R}^N)$.
\end{enumerate}
\end{proposition}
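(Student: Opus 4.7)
The three assertions can be handled in parallel by exploiting the natural isometric embedding
$$
J: X^{p,r}(\Omega;\R^N) \to L^p(\Omega;\R^{d\times N}) \times L^r(\Omega;\R^N), \quad J(u) = (\nabla u, u),
$$
defined first on $C_c^\infty(\Omega;\R^N)$ and extended by continuity to the completion $X^{p,r}(\Omega;\R^N)$. Equipping the product with the norm $\|(\xi, h)\| := \|\xi\|_{L^p} + \|h\|_{L^r}$, one sees directly from the definition of $\|\cdot\|_{X^{p,r}}$ that $J$ is a linear isometry; in particular $J(X^{p,r}(\Omega;\R^N))$ is a closed linear subspace of the product.

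For (i), the product $L^p(\Omega;\R^{d\times N}) \times L^r(\Omega;\R^N)$ is reflexive as a product of two reflexive Banach spaces, and closed subspaces of reflexive spaces are reflexive, so $X^{p,r}(\Omega;\R^N)$ is reflexive via the isometric isomorphism with $J(X^{p,r}(\Omega;\R^N))$.

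For (ii), given $u \in W^{1,p}_0(\Omega;\R^N) \cap L^r(\Omega;\R^N)$, the plan is to build $C_c^\infty(\Omega;\R^N)$-approximants in two steps. Multiplying by the cut-offs $\rho_R$ from \eqref{cut-off} yields $u_R := u\rho_R \in W^{1,p}_0(\Omega;\R^N) \cap L^r(\Omega;\R^N)$ with support in $\Omega \cap \overline{B_{2R}}$. Using $\nabla u_R = \rho_R \nabla u + u \otimes \nabla \rho_R$, $|\nabla \rho_R| \leq C/R$, $u \in L^p(\Omega;\R^N) \cap L^r(\Omega;\R^N)$, and dominated convergence, one verifies $u_R \to u$ in both $W^{1,p}$ and $L^r$. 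For each fixed $R$, the compactly supported $u_R \in W^{1,p}_0(\Omega;\R^N)$ can then be approximated by $C_c^\infty(\Omega;\R^N)$ functions in $W^{1,p}$ and simultaneously in $L^r$ by the standard Lipschitz-domain construction (local partition of unity near $\partial\Omega$, inward translation along the Lipschitz graph direction, followed by mollification), since mollifiers converge in every $L^q$ space containing the target. A diagonal extraction then produces a single sequence in $C_c^\infty(\Omega;\R^N)$ approaching $u$ in $X^{p,r}$.

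For (iii), let $F \in (X^{p,r}(\Omega;\R^N))^*$. Transporting $F$ via the isometry $J$ yields a bounded linear functional on $J(X^{p,r}(\Omega;\R^N))$, which extends by Hahn--Banach to a bounded linear functional $\tilde{F}$ on the full product with the same norm. Since the dual of $L^p(\Omega;\R^{d\times N}) \times L^r(\Omega;\R^N)$ under the chosen norm is canonically $L^{p'}(\Omega;\R^{d\times N}) \times L^{r'}(\Omega;\R^N)$, there exist $F_1 \in L^{p'}(\Omega;\R^{d\times N})$ and $F_2 \in L^{r'}(\Omega;\R^N)$ with
$$
\tilde{F}(\xi, h) = \int_\Omega F_1 : \xi \,\d x + \int_\Omega F_2 \cdot h \,\d x;
$$
specialising to $(\xi, h) = J(u) = (\nabla u, u)$ gives the claimed representation. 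The only step requiring genuine care is the simultaneous $W^{1,p}$-$L^r$ density in (ii), since $W^{1,p}$-density of $C_c^\infty(\Omega;\R^N)$ in $W^{1,p}_0(\Omega;\R^N)$ alone does not guarantee $L^r$-convergence when $r$ lies outside the Sobolev embedding range of $p$; parts (i) and (iii) are then immediate consequences of the isometric embedding into $L^p \times L^r$.
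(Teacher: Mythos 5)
Your argument is correct and matches the paper's approach. The paper likewise uses the isometric embedding $u \mapsto (\nabla u, u)$ into $L^p(\Omega;\mathbb{R}^{d\times N}) \times L^r(\Omega;\mathbb{R}^N)$ for (i) and (iii) (completeness, reflexivity via a closed subspace of a reflexive product, and Hahn--Banach plus the dual representation of the product), and for (ii) it appeals to the technique behind Brezis, Proposition 9.18, which is precisely the spatial cut-off, inward translation and mollification construction you write out to obtain simultaneous $W^{1,p}$- and $L^r$-convergence.
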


In particular, in order to check the assertion (ii), one may observe that $W^{1,p}_0(\Omega) \cap L^r(\Omega)$ coincides with the closure of $C^\infty_c(\Omega)$ in $W^{1,p}(\Omega) \cap L^r(\Omega)$ (cf.~see~\cite[Proposition 9.18]{Bre}).

\prf{
\begin{proof}
As for (i), we first prove the completeness of $(X^{p,r}(\Omega;\mathbb{R}^N), \|\cdot\|_{X^{p,r}})$. Let $(u^k)$ be a Cauchy sequence in $X^{p,r}(\Omega;\mathbb{R}^N)$. Then $(u^k)$ and $(\nabla u^k)$ are Cauchy sequences in $L^{r}(\Omega;\mathbb{R}^N)$ and $L^{p}(\Omega;\mathbb{R}^{d\times N})$, respectively. From the completeness of $L^{r}(\Omega;\mathbb{R}^N)$ and $L^{p}(\Omega;\mathbb{R}^{d\times N})$, there exist $u \in L^{r}(\Omega;\mathbb{R}^N)$ and $\xi \in L^{p}(\Omega;\mathbb{R}^{d\times N})$ such that
\begin{alignat*}{3}
 u^k &\to u \quad &&\text{ in } L^{r}(\Omega;\mathbb{R}^N),\\
 \nabla u^k &\to \xi \quad &&\text{ in } L^{p}(\Omega;\mathbb{R}^{d\times N}).
\end{alignat*}
Moreover, we observe that
\begin{align*}
 \int_\Omega \nabla u^k \cdot \varphi \,\d x &= -\int_\Omega u^k \cdot \nabla \varphi \,\d x
\end{align*}
for any $\varphi \in C_c^\infty(\Omega;\mathbb{R}^N)$. Passing to the limit as $k \to \infty$, we deduce that
$$
\int_\Omega \xi \cdot \varphi \,\d x = -\int_\Omega u \cdot \nabla \varphi \,\d x,
$$
which yields $\xi = \nabla u$. Therefore we infer that $u \in X^{p,r}(\Omega;\mathbb{R}^N)$ and $u^k \to u$ in $X^{p,r}(\Omega;\mathbb{R}^N)$. Thus we conclude that $X^{p,r}(\Omega;\mathbb{R}^N)$ is complete.

We next prove the reflexivity. Set $E := L^p(\Omega;\mathbb{R}^{d\times N}) \times L^r(\Omega;\mathbb{R}^N)$ equipped with the norm $\|\cdot\|_E$ given by
$$
\|h\|_E:=\|h_1\|_{L^p(\Omega;\mathbb{R}^{d\times N})}+\|h_2\|_{L^r(\Omega;\mathbb{R}^N)}
$$
for $h =(h_1,h_2) \in E$. Moreover, let $T: X^{p,r}(\Omega;\mathbb{R}^N) \to E$ be defined by $Tu := (\nabla u,u)$ for $u \in X^{p,r}(\Omega;\mathbb{R}^N)$. Then $T$ is isometric and hence injective. Moreover, one observes that
$$
E^{**} = L^p(\Omega;\mathbb{R}^{d\times N})^{**} \times L^r(\Omega;\mathbb{R}^N)^{**} = L^p(\Omega;\mathbb{R}^{d\times N})\times L^r(\Omega;\mathbb{R}^N) = E.
$$
Thus $E$ is reflexive. Therefore the image $T(X^{p,r}(\Omega;\mathbb{R}^N))$ is closed and hence reflexive. Since $T$ is isometric, $X^{p,r}(\Omega;\mathbb{R}^N)$ turns out to be reflexive. Thus (i) is proved.

We next prove (ii). As in the proof of Proposition 9.18 of~\cite{Bre}, one can verify that $W^{1,p}_0(\Omega) \cap L^r(\Omega)$ coincides with the closure of $C^\infty_c(\Omega)$ in $W^{1,p}(\Omega) \cap L^r(\Omega)$, whence (ii) follows immediately from the definition of $X^{p,r}(\Omega;\mathbb{R}^N)$.

Finally, we prove (iii). Let $F \in (X^{p,r}(\Omega;\mathbb{R}^N))^*$ be fixed. Set $E := L^p(\Omega;\mathbb{R}^{d\times N}) \times L^r(\Omega;\mathbb{R}^N)$ be equipped with the norm $\|\,\cdot\,\|_E$ given by 
$$
\|h\|_E:=\|h_1\|_{L^p(\Omega;\mathbb{R}^{d\times N})}+\|h_2\|_{L^r(\Omega;\mathbb{R}^N)}
$$
for $h = (h_1,h_2) \in E$. Moreover, define a mapping $T : X^{p,r}(\Omega;\mathbb{R}^N) \to E$  by $Tu := (\nabla u,u)$ for $u \in X^{p,r}(\Omega;\mathbb{R}^N)$. Then it is isometric (and hence injective). Set $G := T(X^{p,r}(\Omega;\mathbb{R}^N)) \subset E$. Then $T : X^{p,r}(\Omega;\mathbb{R}^N) \to G$ is of course bijective. Let $S$ be its inverse mapping and define a mapping $\Phi_F : G \to \mathbb{R}$ by 
$$
\Phi_F(h) := \langle F, Sh \rangle_{X^{p,r}}
$$
for $h = (\nabla h_0,h_0) \in G$. Then it follows that
\begin{align*}
|\Phi_F(h)|
 &\leq \|F\|_{(X^{p,r})^*} \|Sh\|_{X^{p.r}}\\
 &\leq \|F\|_{(X^{p,r})^*} \|S\|_{\mathcal{L}(G,X^{p,r})} \|h\|_E\\
 &=\|F\|_{(X^{p,r})^*} \|h\|_E
\end{align*}
for $h = (\nabla h_0,h_0) \in G$. Since $\Phi_F$ is linear by definition, one observes that $\Phi_F \in G^*$. Thus due to Hahn--Banach's theorem, we can extend $\Phi_F$ onto $E$ and denote the extension by $\overline{\Phi_F}$. Then it follows that 
$$
\overline{\Phi_F} \in E^* \simeq L^{p'}(\Omega;\mathbb{R}^{d\times N}) \times L^{r'}(\Omega;\mathbb{R}^{N}).
$$
Thus as in~\cite[Proposition 8.14]{Bre}, one can verify that
\begin{align*}
\langle F,u\rangle_{X^{p,r}}
 &= \langle F, (S\circ T)u \rangle_{X^{p,r}}\\
 &=\langle \Phi_F, Tu \rangle_{X^{p,r}}\\
 &=\langle \overline{\Phi_F}, (\nabla u,u)_E \rangle_{X^{p,r}}\\
 &=\langle \overline{\Phi_F}(\cdot,0), \nabla u \rangle_{L^{p}(\Omega;\mathbb{R}^{d\times N})} + \langle \overline{\Phi_F}(0,\cdot), u \rangle_{L^{r}(\Omega;\mathbb{R}^N)}\\
 &= \int_\Omega F_1 \cdot \nabla u \,\d x + \int_\Omega F_2 \cdot u \,\d x
\end{align*}
for any $u \in X^{p,r}(\Omega;\mathbb{R}^N)$. Thus (iii) is proved.
\end{proof}
}

We set $\mathcal{A}u := -\,\dv A \,(\cdot,\nabla u)$ and $\mathcal{B}u := \mathcal{A}u + |u|^{r-2}u$ for $u \in X^{p,r}(\Omega;\mathbb{R}^N)$. Then the operator $\mathcal{B} : X^{p,r}(\Omega;\mathbb{R}^N) \to (X^{p,r}(\Omega;\mathbb{R}^N))^*$ is defined by
\begin{align*}
[ \mathcal{B}u ](\varphi)
 &:=\int_\Omega A(\cdot,\nabla u) : \nabla \varphi \,\d x + \int_\Omega |u|^{r-2}u \cdot \varphi \,\d x
\end{align*}
for $u, \varphi \in X^{p,r}(\Omega;\mathbb{R}^N)$; indeed, we note that
\begin{align*}
|[ \mathcal{B}u ](\varphi)|
 &\leq \|A(\cdot,\nabla u)\|_{L^{p'}} \|\nabla\varphi\|_{L^p} + \||u|^{r-1}\|_{L^{r'}} \|\varphi\|_{L^r}\\
 &\leq (\|A(\cdot,\nabla u)\|_{L^{p'}} + \|u\|^{r-1}_{L^{r}}) \|\varphi\|_{X^{p,r}}
\end{align*}
for $\varphi \in X^{p,r}(\Omega;\mathbb{R}^N)$, and hence, we find that $\mathcal{B}u \in (X^{p,r}(\Omega;\mathbb{R}^N))^*$. Moreover, we can immediately observe that

\begin{lemma}
The operator $\mathcal{B} : X^{p,r}(\Omega;\mathbb{R}^N) \to (X^{p,r}(\Omega;\mathbb{R}^N))^*$ is a strictly monotone operator.
\end{lemma}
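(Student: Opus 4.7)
The plan is to unpack the definition of $\mathcal{B}$ and show that for $u, v \in X^{p,r}(\Omega;\mathbb{R}^N)$ with $u \neq v$, the pairing $[\mathcal{B}u - \mathcal{B}v](u-v)$ is strictly positive. Writing
\begin{align*}
[\mathcal{B}u - \mathcal{B}v](u-v)
&= \int_\Omega \bigl(A(\cdot,\nabla u) - A(\cdot,\nabla v)\bigr) : \nabla(u-v) \,\d x \\
&\qquad + \int_\Omega \bigl(|u|^{r-2}u - |v|^{r-2}v\bigr) \cdot (u-v) \,\d x,
\end{align*}
I will split the analysis into the $\mathcal{A}$-part and the absorption part.

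For the first integrand, the monotonicity assumption \eqref{ineq:ellipAmono} applied pointwise with $z_1 = \nabla u(x)$ and $z_2 = \nabla v(x)$ shows that the integrand is nonnegative a.e.\ in $\Omega$. Hence this term contributes something $\geq 0$ and can be discarded in the strict inequality.

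For the second integrand, I invoke the well-known strict monotonicity of the map $w \mapsto |w|^{r-2}w$ on $\mathbb{R}^N$ for $r \in (1,\infty)$: the pointwise inequality $(|a|^{r-2}a - |b|^{r-2}b) \cdot (a-b) \geq 0$ holds with equality if and only if $a = b$. Therefore $\int_\Omega (|u|^{r-2}u - |v|^{r-2}v) \cdot (u-v)\,\d x \geq 0$, with equality only if $u = v$ a.e.\ in $\Omega$, i.e., only if $u = v$ in $L^r(\Omega;\mathbb{R}^N)$. Combined with the fact that $u = v$ a.e.\ implies $\nabla u = \nabla v$ a.e., this forces $u = v$ in $X^{p,r}(\Omega;\mathbb{R}^N)$.

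No real obstacle is anticipated here — the statement reduces to two standard pointwise monotonicity facts combined with the observation that the $L^r$-part of the $X^{p,r}$-norm already separates points. The only minor point of care is to note that strictness comes from the absorption term alone, since \eqref{ineq:ellipAmono} is only plain (not strict) monotonicity.
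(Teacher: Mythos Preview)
Your proof is correct and follows the same decomposition as the paper: the $\mathcal{A}$-part is nonnegative by \eqref{ineq:ellipAmono}, and strictness comes from the absorption term via the strict monotonicity of $w \mapsto |w|^{r-2}w$. The only cosmetic difference is that the paper verifies monotonicity of the power map through an integrated Young's inequality rather than the pointwise inequality you invoke; your argument is in fact slightly more explicit about where the strictness comes from.
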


\prf{
\begin{proof}
For any $n \inz_{\geq 1}$, $1 < t < \infty$ and $v,w \in L^t(\Omega;\mathbb{R}^n)$, due to Young's inequality, we see that
\begin{align*}
\MoveEqLeft{
\langle |v|^{t-2}v-|w|^{t-2}w,v-w\rangle_{L^t}
}\\
 &= \|v\|^t_{L^t} + \|w\|^t_{L^t} - \langle |v|^{t-2}v, w \rangle_{L^t} - \langle v, |w|^{t-2}w \rangle_{L^t}\\
 &\geq \|v\|^t_{L^t} + \|w\|^t_{L^t} - \left( \frac{\|v\|^t_{L^t}}{t'} + \frac{\|w\|^t_{L^t}}{t} \right) - \left( \frac{\|v\|^t_{L^t}}{t} + \frac{\|w\|^t_{L^t}}{t'} \right)\\
 &=0.
\end{align*}
Hence for any $u_1,u_2\in X^{p,r}(\Omega;\mathbb{R}^N)$, setting $(n,t,v,w) := (d\times N,p,\nabla u_1,\nabla u_2)$ and applying the above to $(N,r,u_1,u_2)$, we obtain the monotonicity of $\mathcal{A}$ and $u \mapsto |u|^{r-2}u$. Thus $\mathcal{B}$ turns to be monotone.
\end{proof}
}

Furthermore, we have

\begin{lemma}
The operator $\mathcal{B} : X^{p,r}(\Omega;\mathbb{R}^N)\to(X^{p,r}(\Omega;\mathbb{R}^N))^*$ is coercive and continuous.
\end{lemma}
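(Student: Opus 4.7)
\textbf{Coercivity.} The plan is to test $\mathcal{B}u$ against $u$ itself. Using the ellipticity bound \eqref{ineq:ellipAcoer}, we obtain
$$
[\mathcal{B}u](u) = \int_\Omega A(\cdot,\nabla u) : \nabla u \,\d x + \int_\Omega |u|^r \,\d x \geq C_1 \|\nabla u\|_{L^p}^p + \|u\|_{L^r}^r - \int_\Omega \beta_1 \,\d x,
$$
where we use that $\beta_1 \in L^1(\Omega)$ in the setting of this section (with globally integrable forcing). Since $p,r > 1$, the elementary inequality
$$
\bigl(\|\nabla u\|_{L^p} + \|u\|_{L^r}\bigr)^{\min\{p,r\}} \leq C\bigl(1 + \|\nabla u\|_{L^p}^p + \|u\|_{L^r}^r\bigr)
$$
then yields $[\mathcal{B}u](u)/\|u\|_{X^{p,r}} \to \infty$ as $\|u\|_{X^{p,r}} \to \infty$, which is the coercivity of $\mathcal{B}$.

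\textbf{Continuity.} The plan is to reduce to continuity of two Nemytskii operators, namely $v \mapsto A(\cdot,v)$ from $L^p(\Omega;\R^{d\times N})$ to $L^{p'}(\Omega;\R^{d\times N})$ and $w \mapsto |w|^{r-2}w$ from $L^r(\Omega;\R^N)$ to $L^{r'}(\Omega;\R^N)$. Suppose $u_k \to u$ in $X^{p,r}(\Omega;\R^N)$, i.e., $\nabla u_k \to \nabla u$ in $L^p$ and $u_k \to u$ in $L^r$. By the usual subsequence principle, it suffices to show that \emph{some} subsequence of $(\mathcal{B}u_k)$ converges to $\mathcal{B}u$ in the dual norm. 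Passing to a subsequence (not relabelled), we may assume pointwise a.e.\ convergence $\nabla u_k \to \nabla u$ and $u_k \to u$ together with pointwise majorants $|\nabla u_k| \leq h_1 \in L^p(\Omega)$ and $|u_k| \leq h_2 \in L^r(\Omega)$. The Carath\'eodory property of $A$ and continuity of $w \mapsto |w|^{r-2}w$ give pointwise a.e.\ convergence of $A(\cdot,\nabla u_k) \to A(\cdot,\nabla u)$ and $|u_k|^{r-2}u_k \to |u|^{r-2}u$. The growth bound \eqref{ineq:ellipAbdd} and the elementary estimate $||w|^{r-2}w|^{r'} = |w|^r$ provide integrable domination
$$
|A(\cdot,\nabla u_k)|^{p'} \leq C(h_1^p + \beta_2^{p'}), \qquad \bigl||u_k|^{r-2}u_k\bigr|^{r'} \leq h_2^r,
$$
using $\beta_2 \in L^{p'}(\Omega)$ in the setting of this section. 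Lebesgue's dominated convergence theorem then gives $A(\cdot,\nabla u_k) \to A(\cdot,\nabla u)$ in $L^{p'}(\Omega;\R^{d\times N})$ and $|u_k|^{r-2}u_k \to |u|^{r-2}u$ in $L^{r'}(\Omega;\R^N)$. From
$$
\bigl|[\mathcal{B}u_k - \mathcal{B}u](\varphi)\bigr| \leq \|A(\cdot,\nabla u_k) - A(\cdot,\nabla u)\|_{L^{p'}} \|\nabla\varphi\|_{L^p} + \bigl\||u_k|^{r-2}u_k - |u|^{r-2}u\bigr\|_{L^{r'}} \|\varphi\|_{L^r}
$$
for any $\varphi \in X^{p,r}(\Omega;\R^N)$, we then conclude $\|\mathcal{B}u_k - \mathcal{B}u\|_{(X^{p,r})^*} \to 0$ along the chosen subsequence, and hence for the whole sequence by the subsequence principle.

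\textbf{Anticipated obstacle.} The main technical point is the continuity of the Nemytskii operator $v \mapsto A(\cdot,v)$ in an unbounded domain, where one cannot rely on finite measure; this is precisely what forces the passage to an a.e.\ convergent subsequence with an $L^p$-majorant, a step which is standard but must be combined with the subsequence principle to upgrade convergence from subsequences to the whole sequence. The coercivity step is routine once one accepts the (implicit) assumption $\beta_1 \in L^1(\Omega)$ and $\beta_2 \in L^{p'}(\Omega)$ appropriate to the globally integrable forcing setting of this section.
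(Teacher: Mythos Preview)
Your proof is correct and follows essentially the same approach as the paper: coercivity via the ellipticity bound \eqref{ineq:ellipAcoer} tested against $u$, and continuity via strong convergence of the Nemytskii operators $\nabla u \mapsto A(\cdot,\nabla u)$ in $L^{p'}$ and $u \mapsto |u|^{r-2}u$ in $L^{r'}$, followed by the obvious dual-norm estimate. The only difference is that the paper simply asserts these Nemytskii convergences (they are standard consequences of Krasnoselskii's theorem under the Carath\'eodory and growth assumptions, valid without any finite-measure hypothesis), whereas you spell out a proof via the subsequence principle and dominated convergence; your remark that $\beta_1 \in L^1(\Omega)$, $\beta_2 \in L^{p'}(\Omega)$ are implicitly assumed in this section matches the paper's own usage.
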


\begin{proof}
We first prove the coercivity. Let $(u^k)$ be a sequence in $X^{p,r}(\Omega;\mathbb{R}^N)$ and let $C \geq 0$ be a constant such that
\begin{equation}\label{coer2}
\frac{\langle \mathcal{B}u^k,u^k\rangle_{X^{p,r}}}{\|u^k\|_{X^{p,r}}} \leq C \quad \mbox{ for } \ k \inz_{\geq 1} \ \mbox{(large enough)},
\end{equation}
which implies
\begin{align*}
 C_1 \|\nabla u^k\|_{L^p}^p + \|u^k\|_{L^r}^r
 &\stackrel{\eqref{ineq:ellipAcoer}}{\leq} \langle \mathcal{B}u^k, u^k \rangle_{X^{p,r}} + \|\beta_1\|_{L^1}\\
 &\stackrel{\eqref{coer2}}{\leq} C \|u^k\|_{X^{p,r}} +\|\beta_1\|_{L^1}\\
 &\leq \frac{C_1}{2}\|\nabla u^k\|_{L^p}^p + \frac{1}{2}\|u^k\|_{L^r}^r + C.
\end{align*}
Thus $\|u^k\|_{X^{p,r}}$ is bounded, and therefore, $\mathcal{B}$ turns out to be coercive.

We claim that the operator $\mathcal{B}:X^{p,r}(\Omega;\mathbb{R}^N)\to (X^{p,r}(\Omega;\mathbb{R}^N))^*$ is continuous. Indeed, let $u^k \to u$ in $X^{p,r}(\Omega;\mathbb{R}^N)$. Then $\nabla u^k \to \nabla u$ in $L^{p}(\Omega;\mathbb{R}^{d\times N})$ and $u^k \to u$ in $L^{r}(\Omega;\mathbb{R}^N)$. Then $A(\cdot,\nabla u^k) \to A(\cdot,\nabla u)$ strongly in $L^{p'}(\Omega;\mathbb{R}^{d\times N})$ and $|u^k|^{r-2}u^k \to |u|^{r-2}u$ strongly in $L^{r'}(\Omega;\mathbb{R}^N)$. Hence we observe that
\begin{align*}
 \MoveEqLeft{
 \|\mathcal{B}u^k-\mathcal{B}u\|_{(X^{p,r}(\Omega;\mathbb{R}^N))^*}
 }\\
\prf{ &= \sup_{\|\varphi\|_{X^{p,r}}=1} \langle \mathcal{B}u^k - \mathcal{B}u, \varphi\rangle_{X^{p,r}}\\
 &= \sup_{\|\varphi\|_{X^{p,r}}=1} \left[ \int_\Omega (A(\cdot,\nabla u^k)-A(\cdot,\nabla u)) : \nabla\varphi \,\d x\right.\\
 &\qquad \left. + \int_\Omega (|u^k|^{r-2}u^k-|u|^{r-2}u) \cdot \varphi \,\d x \right]\\
 &\leq \sup_{\|\varphi\|_{X^{p,r}}=1} \left( \|A(\cdot,\nabla u^k)-A(\cdot,\nabla u)\|_{L^{p'}} \right.\\
 &\qquad \left.+ \||u^k|^{r-2}u^k-|u|^{r-2}u\|_{L^{r'}} \right) \|\varphi\|_{X^{p,r}}\\}
 &\leq \|A(\cdot,\nabla u^k)-A(\cdot,\nabla u)\|_{L^{p'}} + \||u^k|^{r-2}u^k-|u|^{r-2}u\|_{L^{r'}} \to 0
\end{align*}
as $k \to\infty$. Hence $\mathcal{B}$ is continuous.
\end{proof}

We are now in a position to prove Proposition \ref{prop:approxsol}.

\begin{proof}[Proof of Proposition {\rm \ref{prop:approxsol}}]
Let $f \in L^p(\Omega;\mathbb{R}^{d\times N})$ and $g \in L^r(\Omega;\mathbb{R}^N)$. Then we observe that
\begin{align*}
\MoveEqLeft{
 \langle -\,\dv \,(|f|^{p-2}f)+|g|^{r-2}g, \varphi \rangle_{X^{p,r}}
}\\
 &:= \int_\Omega |f|^{p-2}f : \nabla\varphi \,\d x + \int_\Omega |g|^{r-2}g \cdot \varphi \, \d x
\end{align*}
for $\varphi \in X^{p,r}(\Omega;\mathbb{R}^N)$. Hence one can deduce that $-\,\dv \,(|f|^{p-2}f) + |g|^{r-2}g \in (X^{p,r}(\Omega;\mathbb{R}^N))^*$. Due to the monotonicity and continuity of $\mathcal{B}$, it turns out to be maximal monotone, and moreover, from the coercivity, we infer that $\mathcal{B}$ is bijective (see, e.g.,~\cite{B}). Therefore \P \ admits a weak solution $u \in X^{p,r}(\Omega;\mathbb{R}^N)$. The uniqueness follows immediately from the strict monotonicity.
\end{proof}

\section{Weighted local energy estimates}\label{S:loc_en_est}

This section is devoted to deriving weighted local energy estimates for weak solutions to \P \ by employing a \emph{relative truncation technique} developed in~\cite{BuSch16}, where weighted energy estimates are established for a bounded domain case along with globally integrable forcing, i.e., $f \in L^q(\Omega;\R^N)$, and therefore, neither localization nor absorption is needed.

\subsection{Relative truncation}

Let $\mathcal{O} \subset \mathbb{R}^d$ be an open set. For $u \in L^1_\loc(\mathbb{R}^d;\mathbb{R}^N)$, define the \emph{relative truncation} $u_{\mathcal{O}} : \mathbb{R}^d \to \mathbb{R}$ of $u$ into the open set $\mathcal{O}$ by 
\begin{align*}
 u_{\mathcal{O}}(x) 
 &:= \begin{cases}
      \sum_i \psi_i(x) \overline{u}_i &\mbox{if } \ x \in \mathcal{O}, \\
      u(x) &\mbox{if } \ x \notin \mathcal{O},
     \end{cases}
\\
 \overline{u}_i
 &:= \begin{cases}
      \fint_{(9/8)Q_i} u\,\d x & \mbox{if } \ (9/8)\color{black}Q_i \subset \Omega, \\
      0 & \mbox{otherwise.}
     \end{cases}
\end{align*}
Here $(Q_i)$ and $(\psi_i)$ are the cubes and partition of unity introduced in Proposition \ref{prop:CubeSplit} for the open set $\mathcal{O}$. In what follows, we simply use the same notation $u_{\mathcal{O}}$ for the restriction of $u_{\mathcal{O}}$ into the domain $\Omega$. Then we recall
\begin{lemma}[{\cite[Lemma 3.2]{BuSch16}}]\label{lem:BS3.2}
Let $\Omega$ be a Lipschitz domain of $\mathbb{R}^d$ and let $u \in W^{1,p}_0(\Omega;\mathbb{R}^N)$ for some $p \in [1,\infty)$. Then $u_{\mathcal{O}} \in W^{1,p}_0(\Omega;\mathbb{R}^N)$ and there exists a constant $c = c(d,p,\Omega)$ such that
\begin{align*}
\int_{\Omega} |\nabla(u-u_{\mathcal{O}})|^p \,\d x \leq c \int_{\mathcal{O}\cap\Omega} |\nabla u|^p \,\d x.
\end{align*}
\end{lemma}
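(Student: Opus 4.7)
My strategy is to localise on the Whitney cubes $(Q_i)$ of $\mathcal{O}$ and exploit the partition-of-unity identities to rewrite $\nabla(u-u_\mathcal{O})$ in a form involving only oscillations of $u$. Since $u_\mathcal{O}\equiv u$ on $\mathcal{O}^c$, the weak derivative $\nabla(u-u_\mathcal{O})$ vanishes a.e.~outside $\mathcal{O}$, and hence the integral on the left reduces to
$$
\int_{\Omega}|\nabla(u-u_\mathcal{O})|^p\,\d x = \sum_i \int_{Q_i\cap\Omega}|\nabla(u-u_\mathcal{O})|^p\,\d x.
$$
So the goal is a local estimate for each cube plus a bounded-overlap summation.

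For the local computation on a fixed $Q_i\subset\mathcal{O}$, only the finitely many $\psi_j$ with $j\in A_i$ contribute. Using $\sum_{j\in A_i}\psi_j \equiv 1$ on $Q_i$, I would write
$$
u-u_{\mathcal{O}} = \sum_{j\in A_i} \psi_j\,(u-\overline{u}_j)\quad\text{on }Q_i,
$$
differentiate, and then use $\sum_{j\in A_i}\nabla\psi_j\equiv 0$ on $Q_i$ to subtract $\overline{u}_i$ without changing the value:
$$
\nabla(u-u_\mathcal{O}) = \nabla u \;-\; \sum_{j\in A_i} (\overline{u}_j-\overline{u}_i)\otimes\nabla\psi_j.
$$
The first term integrates to the desired right-hand side directly. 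For the second, the Whitney properties give $|\nabla\psi_j|\lesssim 1/\diam(Q_i)$ and $\diam(Q_j)\simeq \diam(Q_i)$ for $j\in A_i$, while a chained Poincaré inequality on the connected union $\bigcup_{j\in A_i}(9/8)Q_j$ yields
$$
|\overline{u}_j-\overline{u}_i| \;\leq\; c\,\diam(Q_i)\,\fint_{\bigcup_{j\in A_i}(9/8)Q_j} |\nabla u|\,\d x,
$$
provided all relevant $(9/8)Q_k$ sit inside $\Omega$. Raising to the $p$-th power via Jensen and integrating over $Q_i$ gives
$$
\int_{Q_i}\Bigl|\sum_{j\in A_i}(\overline{u}_j-\overline{u}_i)\otimes\nabla\psi_j\Bigr|^p\d x \leq c\int_{\bigcup_{j\in A_i}(9/8)Q_j\cap\Omega} |\nabla u|^p\,\d x.
$$
Summing over $i$ and invoking Whitney property (iv) (each point lies in a bounded number of enlarged cubes) produces the stated global inequality.

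The delicate point, and where I expect to spend most effort, is the treatment of the \emph{boundary Whitney cubes}, i.e.~those $Q_i$ with $(9/8)Q_i\not\subset\Omega$, for which the definition sets $\overline{u}_i=0$. Here the Poincaré step has to be replaced by one that exploits $u\in W^{1,p}_0(\Omega)$: extending $u$ by zero to $\mathbb{R}^d$ (which lies in $W^{1,p}(\mathbb{R}^d)$ because $\partial\Omega$ is Lipschitz), the Whitney property $\diam(Q_i)<\mathrm{dist}(Q_i,\mathcal{O}^c)\leq 4\diam(Q_i)$ ensures that the enlarged region on which one averages meets $\Omega^c$ in a set of comparable measure, so a Poincaré inequality with zero boundary datum on a ball of radius $\simeq\diam(Q_i)$ still gives an estimate of $|\overline{u}_j|$ by $\diam(Q_i)\,\fint |\nabla u|\,\d x$ over a slightly larger neighbourhood inside $\Omega$. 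Combining both cases yields the local bound uniformly.

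Finally, for the claim $u_\mathcal{O}\in W^{1,p}_0(\Omega;\mathbb{R}^N)$, I would argue by density: approximate $u$ by $u^{(n)}\in C^\infty_c(\Omega;\mathbb{R}^N)$ in the $W^{1,p}$-norm, so that each $u^{(n)}_\mathcal{O}$ is obviously in $W^{1,p}_0(\Omega;\mathbb{R}^N)$ (the sums defining it are locally finite and the cubes too close to $\partial\Omega$ contribute zero by convention), and then pass to the limit using the local estimate just proved together with the linearity of $u\mapsto u_\mathcal{O}$.
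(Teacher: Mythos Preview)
The paper does not supply a proof of this lemma; it merely quotes it from \cite[Lemma~3.2]{BuSch16}. So there is no ``paper's own proof'' to compare against here. Your outline is the standard route to such Lipschitz-truncation estimates and is essentially what one finds in the cited reference: localise to Whitney cubes, use $\sum_j\nabla\psi_j=0$ to replace $\overline{u}_j$ by $\overline{u}_j-\overline{u}_i$, control the oscillations by Poincar\'e, and sum using bounded overlap.

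One point deserves tightening. In the boundary-cube case you invoke the Whitney relation $\diam(Q_i)<\mathrm{dist}(Q_i,\mathcal{O}^c)\leq 4\diam(Q_i)$ to conclude that the enlarged averaging region meets $\Omega^c$ in a set of comparable measure. That relation concerns $\mathcal{O}^c$, not $\Omega^c$, and the two sets are unrelated in general. The correct observation is simply that $(9/8)Q_i\not\subset\Omega$ forces $(9/8)Q_i$ to intersect $\Omega^c$, so some point of $\partial\Omega$ lies within distance $\simeq\diam(Q_i)$ of $Q_i$; the Lipschitz regularity of $\partial\Omega$ then furnishes a cone (or corkscrew) of $\Omega^c$ of volume $\simeq\diam(Q_i)^d$ inside a ball of comparable radius, which is exactly what the zero-trace Poincar\'e inequality needs. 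This is also where the dependence of the constant $c$ on $\Omega$ enters. With that correction your plan is sound.
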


Moreover, we have

\begin{proposition}\label{P:rel_tr_Xpr}
For each $u \in X^{p,r}(\Omega;\mathbb{R}^N)$ and $\rho \in C^\infty_c(\R^d)$, the relative truncation $(u{\rho})_{\mathcal{O}}$ of the product $u{\rho}$ into $\mathcal{O}$ belongs to $X^{p,r}(\Omega;\mathbb{R}^N)$. 
\end{proposition}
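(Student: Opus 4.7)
My plan is to split the claim into two pieces: (a) the product $u\rho$ itself lies in $W^{1,p}_0(\Omega;\mathbb{R}^N) \cap L^r(\Omega;\mathbb{R}^N)$, and (b) the relative truncation operator preserves this intersection. Combined with Proposition \ref{prop:Xp,r}(ii), which embeds this intersection into $X^{p,r}(\Omega;\mathbb{R}^N)$, this finishes the proof. The underlying point is that the cutoff $\rho$ gives $u\rho$ compact support in $\overline{\Omega}$, which is exactly what is needed to make Lemma \ref{lem:BS3.2} applicable despite $X^{p,r}$ being a global, $L^r$-based function space rather than a $W^{1,p}_0$-space.

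For (a), I would approximate $u$ by $u_k \in C_c^\infty(\Omega;\mathbb{R}^N)$ in the $X^{p,r}$-norm and verify that $u_k \rho \to u\rho$ in $W^{1,p}(\Omega) \cap L^r(\Omega)$. Since each $u_k \rho \in C_c^\infty(\Omega;\mathbb{R}^N)$, the limit will automatically lie in $W^{1,p}_0(\Omega;\mathbb{R}^N) \cap L^r(\Omega;\mathbb{R}^N)$. The $L^r$-convergence follows from $\|\rho\|_\infty < \infty$; for the gradient, Leibniz gives
\[
\nabla(u_k \rho) - \nabla(u\rho) = \rho\,(\nabla u_k - \nabla u) + (u_k - u)\otimes\nabla\rho,
\]
and the first term converges in $L^p(\Omega)$ by boundedness of $\rho$, while the second converges in $L^p(\Omega)$ since $\supp \nabla\rho$ is bounded and the embedding $L^r \hookrightarrow L^p$ on bounded sets (available because $p < r$) upgrades the $L^r$-convergence of $(u_k)$ to $L^p$-convergence there.

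For (b), Lemma \ref{lem:BS3.2} applied with $v = u\rho \in W^{1,p}_0(\Omega;\mathbb{R}^N)$ directly yields $(u\rho)_{\mathcal{O}} \in W^{1,p}_0(\Omega;\mathbb{R}^N)$. The $L^r$-estimate is the technical nub, because the lemma only controls the gradient. On $\Omega \setminus \mathcal{O}$ the truncation coincides with $u\rho$ and hence lies in $L^r$. On $\Omega \cap \mathcal{O}$, I would use the pointwise finite-overlap bound on $(\psi_i)$ from Proposition \ref{prop:CubeSplit}(iv) together with H\"older to obtain
\[
|(u\rho)_{\mathcal{O}}(x)|^r = \bigg|\sum_i \psi_i(x)\,\overline{(u\rho)}_i\bigg|^r \leq C(d) \sum_i \psi_i(x)\,|\overline{(u\rho)}_i|^r,
\]
and then combine Jensen's inequality $|\overline{(u\rho)}_i|^r \leq \fint_{(9/8)Q_i} |u\rho|^r\,\d x$, the inclusion $\supp \psi_i \subset (9/8)Q_i$, and the finite overlap of the dilated cubes $((9/8)Q_i)$ to reassemble the sum into
\[
\int_{\Omega \cap \mathcal{O}} |(u\rho)_{\mathcal{O}}|^r\,\d x \leq C \int_\Omega |u\rho|^r\,\d x < \infty.
\]

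The main obstacle I expect is organizational rather than conceptual: since Lemma \ref{lem:BS3.2} controls only the $W^{1,p}$-part of the relative truncation, the $L^r$-bound must be derived by hand from the Whitney combinatorics. Beyond Jensen's inequality and the finite overlap of the partition of unity, no new ideas are needed.
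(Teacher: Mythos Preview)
Your proposal is correct and follows essentially the same route as the paper: first verify $u\rho \in W^{1,p}_0(\Omega;\mathbb{R}^N)\cap L^r(\Omega;\mathbb{R}^N)$ by approximation, then invoke Lemma~\ref{lem:BS3.2} for the $W^{1,p}_0$-part of the truncation, establish the $L^r$-bound on $\mathcal{O}$ by hand via the Whitney combinatorics, and conclude through Proposition~\ref{prop:Xp,r}(ii). The only cosmetic difference is that the paper controls $|(u\rho)_{\mathcal{O}}|$ pointwise via H\"older before integrating, whereas you pass through Jensen on the convex combination $\sum_i \psi_i(x)\,\overline{(u\rho)}_i$; both arguments rely on the same finite-overlap property of the cubes $(9/8)Q_i$ and yield the same estimate.
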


\begin{proof}
We first note that the zero extension of $u$ onto $\mathbb{R}^d$ belongs to $X^{p,r}(\mathbb{R}^d;\mathbb{R}^N)$ (it will still be denoted by $u$ simply). Let $x \in \mathcal{O}$ and let $i$ be such that $x \in Q_i$ (see Proposition \ref{prop:CubeSplit}). Then we observe from Proposition \ref{prop:CubeSplit} that
\begin{align*}
|(u{\rho})_{\mathcal{O}}(x)|
 &\leq \sum_{j \in A_i} \psi_j(x)|\overline{(u{\rho})}_j|\\
 &\leq \sum_{j\in A_i} \psi_j(x)\frac{2^d}{|Q_i|}\int_{(9/8)Q_j\cap\Omega}|u|{\rho} \,\d x\\
 &\leq \frac{C}{|Q_i|} \sum_{j\in A_i}\int_{(9/8)Q_j\cap\Omega}|u| \,\d x\\
 &\leq \frac{C}{|Q_i|} \sum_{j\in A_i}\|u\|_{L^r((9/8)Q_j\cap\Omega)}|(9/8)Q_j|^{1/r'}\\
 &\leq \frac{C}{|Q_i|} \sum_{j\in A_i}\|u\|_{L^r((9/8)Q_j\cap\Omega)}|(9/4)Q_i|^{1/r'}\\
 &\leq C|Q_i|^{-1/r} \sum_{j\in A_i}\|u\|_{L^r((9/8)Q_j\cap\Omega)}.
\end{align*}
Therefore recalling that $\mathcal{O} = \bigcup_i Q_i$, we deduce that
\begin{align}
 \int_{\mathcal{O}} |(u{\rho})_{\mathcal{O}}(x)|^r \,\d x
 &= \sum_{i} \int_{Q_i} |(u{\rho})_{\mathcal{O}}(x)|^r \,\d x\notag\\
 &\leq \sum_{i} \int_{Q_i} C |Q_i|^{-1} \left| \sum_{j\in A_i} \|u\|_{L^r((9/8)Q_j\cap\Omega)} \right|^r \,\d x\notag\\
 &\leq C\sum_{i} |A_i|^{r-1} \left( \sum_{j\in A_i} \|u\|^r_{L^r((9/8)Q_j\cap\Omega)} \right)\notag\\
 &\leq 4^{d(r-1)} C\sum_{i} \left( \sum_{j\in A_i} \|u\|^r_{L^r((9/8)Q_j\cap\Omega)} \right)\notag\\
 &\stackrel{(*)}= 4^{d(r-1)} C\sum_{j} \left( \sum_{i \in A_j} \|u\|^r_{L^r((9/8)Q_j\cap\Omega)} \right)\notag\\
 &\stackrel{(\star)}\leq 4^{d(r+1)} C \int_{\mathcal{O}\cap\Omega} |u|^r \,\d x < +\infty.\label{poco}
\end{align}
Here the equality $(*)$ is derived from the following fundamental observation:
\begin{align*}
\lefteqn{
\sum_{i} \left( \sum_{j\in A_i} \|u\|^r_{L^r((9/8)Q_j\cap\Omega)} \right)
}\\
&= \sum_{i} \left( \sum_{j} \chi_{A_i}(j) \|u\|^r_{L^r((9/8)Q_j\cap\Omega)} \right)\\
&= \sum_{i} \left( \sum_{j} \chi_{A_j}(i) \|u\|^r_{L^r((9/8)Q_j\cap\Omega)} \right)\\
&= \sum_{j} \left( \sum_{i} \chi_{A_j}(i) \|u\|^r_{L^r((9/8)Q_j\cap\Omega)} \right)\\
&= \sum_{j} \left( \sum_{i \in A_j} \|u\|^r_{L^r((9/8)Q_j\cap\Omega)} \right),
\end{align*}
where we used the equivalence $i \in A_j \Leftrightarrow j \in A_i$ as well as Fubini's lemma for double series of nonnegative terms. Moreover, the inequality $(\star)$ also follows (similarly to the above observation) from Proposition \ref{prop:CubeSplit}, which implies that $(9/8)Q_j$ intersects with only $Q_k$ for $k \in A_j$. Indeed, we note that
\begin{align*}
\sum_{j} \|u\|^r_{L^r((9/8)Q_j\cap\Omega)}
&\leq \sum_{j} \sum_{k \in A_j} \|u\|^r_{L^r(Q_k\cap\Omega)}\\
&= \sum_{k} \sum_{j \in A_k} \|u\|^r_{L^r(Q_k\cap\Omega)}\\
&\leq 4^d \sum_{k} \|u\|^r_{L^r(Q_k\cap\Omega)} = 4^d \|u\|^r_{L^r(\mathcal{O}\cap\Omega)}.
\end{align*}
Hence it follows that
\begin{align*}
 \int_{\Omega} |(u{\rho})_{\mathcal{O}}|^r \,\d x
 &= \int_{\mathcal{O}} |(u{\rho})_{\mathcal{O}}|^r \,\d x + \int_{\Omega\backslash\mathcal{O}}|u{\rho}|^r \,\d x < +\infty.
\end{align*}
Thus we have obtained $(u{\rho})_{\mathcal{O}} \in L^r(\Omega;\mathbb{R}^N)$.

Let $(u_n)$ be a sequence in $C^\infty_c(\Omega;\R^N)$ such that $u_n \to u$ strongly in $L^r(\Omega;\R^N)$ and $\nabla u_n \to \nabla u$ strongly in $L^p(\Omega;\R^{d\times N})$. Since $\supp {\rho}$ is compact, we find that $u_n {\rho} \to u \rho$ strongly in $L^p(\Omega;\mathbb{R}^N)$ and
$$
\nabla(u_n{\rho}) = (\nabla u_n){\rho} + u_n\nabla{\rho} \to (\nabla u){\rho} + u\nabla{\rho} = \nabla (u \rho)
$$
strongly in $L^p(\Omega;\mathbb{R}^{d\times N})$. Thus we observe that $u {\rho} \in W_0^{1,p}(\Omega;\mathbb{R}^N)$, and therefore, by the use of Lemma \ref{lem:BS3.2}, we deduce that $(u{\rho})_{\mathcal{O}} \in W_0^{1,p}(\Omega;\mathbb{R}^N)$. Combining all these facts, we have $(u{\rho})_{\mathcal{O}} \in W_0^{1,p}(\Omega;\mathbb{R}^N) \cap L^r(\Omega;\mathbb{R}^N)$, and therefore, Proposition \ref{prop:Xp,r} yields $(u{\rho})_{\mathcal{O}} \in X^{p,r}(\Omega;\mathbb{R}^N)$. The proof is completed.
\end{proof}

\subsection{Weighted local energy estimates}

Developing the argument used in~\cite[Proposition 4.1]{BuSch16}, we shall prove the following proposition, which will play a key role for proving Theorem \ref{thm:elliplocDS}.

\begin{proposition}\label{prop:LocEstimate}
Let $\Omega$ be a {\rm (}possibly unbounded{\rm )} Lipschitz domain of $\mathbb{R}^d$ and let $1 < p < r < \infty$. Then there exists a constant $\varepsilon_1 = \varepsilon_1(C_1,C_2,\Omega,d,N,p,r)> 0$ satisfying the following\/{\rm :} Let $f \in L^p(\Omega;\mathbb{R}^{d\times N})$ and $g \in L^r(\Omega;\mathbb{R}^{N})$ and let $u \in X^{p,r}(\Omega;\mathbb{R}^N)$ be the unique weak solution to \P. Let $\varepsilon \in (0,\varepsilon_1)$ and $h \in L^1_{{\loc}}(\overline{\Omega}) \setminus \{0\}$ be arbitrarily fixed. Then there exists a constant $C_0 = C_0(C_1,C_2,\Omega,d,N,p,r,\varepsilon) > 0$ such that
\begin{align*}
\MoveEqLeft{
\int_{\Omega_{R}} |\nabla u|^{p} \omega \,\d x + \int_{\Omega_{R}} |u|^{r} \omega \,\d x
}\\
&\leq C_0 \left( 
 \int_{\Omega_{2R}} |f|^{p} \omega \,\d x + \int_{\Omega_{2R}} |g|^r \omega \,\d x \right.\\
&\qquad + \left. \int_{\Omega_{2R}} \beta_1 \omega \,\d x + \int_{\Omega_{2R}}\beta_2^{p'} \omega \,\d x + \delta^{-\vep} R^{d-\frac{pr}{r-p}} \right),
\end{align*}
where $\omega$ is the weight function given by $\omega = (M[ \bar h \chi_{\Omega_{2R}} ] + \delta)^{-\varepsilon}$ with the zero extension $\bar h$ of $h$ onto $\R^d$ and $\beta_1 \in L^1_{\loc}(\overline{\Omega})$ and $\beta_2\in L^{p'}_{\loc}(\overline{\Omega})$ are nonnegative functions satisfying \eqref{ineq:ellipAcoer} and \eqref{ineq:ellipAbdd}, for every $R > 0$ and $\delta \in (0,1]$.
\end{proposition}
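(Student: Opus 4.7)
The plan is to follow the relative truncation technique of Bul\'{i}\v{c}ek and Schwarzacher~\cite{BuSch16}, adapted to accommodate the absorption term $|u|^{r-2}u$, a (possibly unbounded) Lipschitz domain via localization, and a weight built from an arbitrary $h\in L^1_{\loc}(\overline\Omega)\setminus\{0\}$. First I would fix a cutoff $\rho=\rho_R\in C_c^\infty(B_{2R})$ with $\rho\equiv 1$ on $B_R$, $0\le\rho\le 1$, $|\nabla\rho|\le C/R$, and write $\bar h$ for the zero extension of $h\chi_{\Omega_{2R}}$ to $\mathbb{R}^d$. For each $\lambda>0$, introduce the open level set $\mathcal{O}_\lambda:=\{x\in\mathbb{R}^d:M\bar h(x)>\lambda\}$, apply the Whitney decomposition of Proposition~\ref{prop:CubeSplit} to it, and consider the relative truncation $\tilde u_\lambda:=(u\rho)_{\mathcal{O}_\lambda}$, which by Proposition~\ref{P:rel_tr_Xpr} belongs to $X^{p,r}(\Omega;\mathbb{R}^N)$ and is thus an admissible test function. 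The link between the family of level-set estimates and the desired weighted inequality is the layer-cake representation
\[
\omega(x)=(M\bar h(x)+\delta)^{-\vep}=\vep\int_0^\infty(\lambda+\delta)^{-\vep-1}\chi_{\mathbb{R}^d\setminus\mathcal{O}_\lambda}(x)\,\d\lambda,
\]
so that Fubini's theorem reduces the proof to establishing, at each $\lambda$, a bound on $\int_{\Omega_R\setminus\mathcal{O}_\lambda}(|\nabla u|^p+|u|^r)\,\d x$ that is integrable against $\vep(\lambda+\delta)^{-\vep-1}\,\d\lambda$ and reassembles into the target right-hand side.

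To produce such a level-set estimate I would test the weak formulation with $\tilde u_\lambda$ and split every integral as $\int_\Omega=\int_{\Omega\setminus\mathcal{O}_\lambda}+\int_{\Omega\cap\mathcal{O}_\lambda}$. On $\Omega\setminus\mathcal{O}_\lambda$ one has $\tilde u_\lambda=u\rho$, and expanding $\nabla(u\rho)=(\nabla u)\rho+u\otimes\nabla\rho$ together with coercivity~\eqref{ineq:ellipAcoer} produces the good terms $C_1\int_{\Omega\setminus\mathcal{O}_\lambda}|\nabla u|^p\rho+\int_{\Omega\setminus\mathcal{O}_\lambda}|u|^r\rho$, modulo a controlled $\beta_1$-contribution. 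The forcing integrands $|f|^{p-2}f:\nabla(u\rho)$ and $|g|^{r-2}g\cdot u\rho$ on the same set are then handled by Young's inequality, absorbing small fractions of $|\nabla u|^p$ and $|u|^r$ and leaving the data contributions $|f|^p$, $|g|^r$, $\beta_1$, $\beta_2^{p'}$. The cross term $\int A(\cdot,\nabla u):(u\otimes\nabla\rho)$ generates a $|u|^p/R^p$ factor; combining Young's inequality with exponents $r/p$ and $r/(r-p)$ with the pointwise bound $\omega\le\delta^{-\vep}$ and $|\Omega_{2R}|\le CR^d$ converts this into the stated error $\delta^{-\vep}R^{d-pr/(r-p)}$.

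The main obstacle is the treatment of the integrals on $\mathcal{O}_\lambda$, namely $\int_{\mathcal{O}_\lambda\cap\Omega}A(\cdot,\nabla u):\nabla\tilde u_\lambda$, $\int_{\mathcal{O}_\lambda\cap\Omega}|u|^{r-2}u\cdot\tilde u_\lambda$, and their source analogues. Here I would exploit Lemma~\ref{lem:BS3.2} and the Whitney structure: both $\tilde u_\lambda$ and $\nabla\tilde u_\lambda$ on $\mathcal{O}_\lambda$ are controlled by averages of $u\rho$ and $\nabla(u\rho)$ over dilated Whitney cubes, each of which admits a neighbour touching $\partial\mathcal{O}_\lambda$, where $M\bar h\le\lambda$ by the very definition of the level set. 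This structural information, combined with the weak-$L^1$ bound $|\mathcal{O}_\lambda|\le C\|\bar h\|_{L^1(\Omega_{2R})}/\lambda$ for the Hardy--Littlewood maximal function and the embedding results of Lemmas~\ref{lem:Emb}--\ref{lem:Emb_Mf}, yields pointwise-in-$\lambda$ bounds with polynomial decay that are integrable against $(\lambda+\delta)^{-\vep-1}$, provided $\vep<\vep_1$ for a sufficiently small $\vep_1=\vep_1(C_1,C_2,\Omega,d,N,p,r)$. Integrating the level-set inequality against $\vep(\lambda+\delta)^{-\vep-1}\,\d\lambda$, applying Fubini to recombine level sets into weighted integrals, and finally absorbing the remaining small multiples of $\int_{\Omega_R}(|\nabla u|^p+|u|^r)\omega$ into the left-hand side yields the desired bound.
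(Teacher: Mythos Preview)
Your outline captures the core mechanism of the relative truncation argument, but it has a genuine gap in the closing step. After you integrate the level-set inequality against $\vep(\lambda+\delta)^{-\vep-1}\,\d\lambda$ and apply Fubini, the residual ``small'' contributions coming from $\mathcal{O}_\lambda$ (and likewise the cutoff error $|u|^p|\nabla\rho|^p$ arising from the cross term $A(\cdot,\nabla u):(u\otimes\nabla\rho)$) do \emph{not} reassemble into multiples of $\int_{\Omega_R}(|\nabla u|^p+|u|^r)\omega$. They reassemble into multiples of the same quantity over the \emph{larger} set $\Omega_{2R}$ (or the support of $\rho$). Concretely, in the paper's computation the Whitney analysis of $J_1,J_2,J_3$ produces, after integration in $\lambda$, terms of size
\[
C\vep\int_{\Omega_{R'}}|\nabla u|^p\omega\,\d x+C\vep\int_{\Omega_{R'}}|u|^r\omega\,\d x,
\]
whereas the coercive left-hand side only controls $\int_{\Omega_{R}}(|\nabla u|^p+|u|^r)\omega$. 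Since $\Omega_R\subsetneq\Omega_{R'}$, a direct absorption is impossible no matter how small $\vep$ is. The paper closes this mismatch by a dyadic iteration: setting $R_n:=R\sum_{i=0}^n 2^{-i}$ and applying the estimate with the pair $(R_n,R_{n+1})$ yields $H_n\le(C\vep+\nu)H_{n+1}+C_R\gamma^n+F_0$ with $\gamma=2^{pr/(r-p)}$, and choosing $\vep,\nu$ so that $(C\vep+\nu)\gamma<1$ lets one sum the geometric series. This iteration is essential, and your proposal omits it entirely.

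A secondary but related issue is the choice of test function. Testing with $(u\rho)_{\mathcal{O}_\lambda}$ alone leaves only a single power of $\rho$ on the good terms; the paper instead tests with $(u\rho)_{\mathcal{O}_\lambda}\rho^{2r-1}$, so that the coercive part carries $\rho^{2r}$ and all Young remainders (which involve $\rho^{(2r-1)p'}$, $\rho^{2r/p'}$, etc.) can be compared to it via $0\le\rho\le 1$. With only one power of $\rho$ the cross term $A(\cdot,\nabla u):(u\otimes\nabla\rho)$ carries no $\rho$ at all, and after Young you would get an unweighted $\int|\nabla u|^p$ on the annulus that cannot be absorbed. Finally, the weak-$L^1$ bound $|\mathcal{O}_\lambda|\le C\|\bar h\|_{L^1}/\lambda$ you mention is not the relevant ingredient; what actually drives the $\mathcal{O}_\lambda$-estimates is the pointwise fact that for each Whitney cube $Q_i$ one has $9Q_i\cap\mathcal{O}_\lambda^c\neq\emptyset$, hence $\fint_{6Q_i}(Mh_\delta)^\alpha\le C\lambda^\alpha$ by the $\mathcal{A}_1$-property of $(Mh_\delta)^\alpha$, which is precisely what allows the insertion of $\lambda^{\alpha\beta}/(Mh_\delta)^{\alpha\beta}$ and the subsequent Fubini reconstruction of the weight.
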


\begin{proof}
In what follows, we use the same letters $u,f,h$ for their zero extensions onto $\mathbb{R}^d$ if no confusion can arise. Moreover, let $R > 0$ and $0 < \delta \leq 1$ be arbitrarily fixed and set $h_\delta := |h|\chi_{\Omega_{2R}} + \delta$. Moreover, we take an open subset of $\R^d$,
$$
\mathcal{O}_\lambda := [Mh_\delta>\lambda] := \left\{ x \inr^d \colon Mh_\delta(x) > \lambda \right\}.
$$
Here and henceforth, we simply denote by $v_\lambda$ the relative truncation $v_{\mathcal{O}_\lambda}$ of each function $v$ into $\mathcal{O}_\lambda$.

Let $R' \in (R,2R]$ and let ${\rho}$ be a cut-off function complying with 
\begin{equation}\label{cut-off0}
\left\{
\begin{aligned}
&{\rho} \in C^\infty_c(\R^d), \quad 0 \leq {\rho} \leq 1 \ \mbox{ in } \mathbb{R}^d, \quad {\rho} \equiv 1 \ \mbox{ on } B_R,\\
&\supp {\rho} \subset \overline{B_{R'}}, \quad \sup_{x\inr^d} |\nabla{\rho}(x)| \leq C/(R'-R)
\end{aligned}
\right. 
\end{equation}
for some constant $C > 0$ independent of $R'$ and $R$. Thanks to Proposition \ref{P:rel_tr_Xpr}, we see that $(u{\rho})_\lambda {\rho}^{2r-1} \in X^{p,r}(\Omega;\mathbb{R}^N)$, and hence, it follows from the weak form that
\begin{align}\label{weakform}
\MoveEqLeft{
\int_{\mathcal{O}^c_\lambda\cap\Omega_{R'}} A(\cdot,\nabla u) : \nabla (u{\rho}^{2r}) \,\d x + \int_{\mathcal{O}^c_\lambda\cap\Omega_{R'}} |u|^{r-2}u \cdot (u{\rho}^{2r}) \,\d x
}\notag\\
 &= \int_{\mathcal{O}_\lambda\cap\Omega_{R'}} |f|^{p-2}f : \nabla ((u{\rho})_\lambda{\rho}^{2r-1}) \,\d x + \int_{\mathcal{O}^c_\lambda\cap\Omega_{R'}} |f|^{p-2}f : \nabla (u{\rho}^{2r}) \,\d x\notag\\
 &\quad +\int_{\mathcal{O}_\lambda\cap\Omega_{R'}} |g|^{r-2}g \cdot ((u{\rho})_\lambda{\rho}^{2r-1}) \,\d x + \int_{\mathcal{O}^c_\lambda\cap\Omega_{R'}} |g|^{r-2}g \cdot (u{\rho}^{2r}) \,\d x\notag\\
 &\quad -\int_{\mathcal{O}_\lambda\cap\Omega_{R'}} A(\cdot,\nabla u) : \nabla ((u{\rho})_\lambda{\rho}^{2r-1}) \,\d x\notag\\
 &\quad -\int_{\mathcal{O}_\lambda\cap\Omega_{R'}} |u|^{r-2}u \cdot ((u{\rho})_\lambda{\rho}^{2r-1}) \,\d x.
\end{align}
We then observe that
\begin{align*}
 \MoveEqLeft{
 \mbox{(the left-hand side of \eqref{weakform})}
}\notag\\
 &= \int_{\mathcal{O}^c_\lambda\cap\Omega_{R'}} A(\cdot,\nabla u) : \left[ (\nabla u) {\rho}^{2r} + u \otimes 2r{\rho}^{2r-1} \nabla {\rho} \right] \,\d x\notag\\
 &\quad + \int_{\mathcal{O}^c_\lambda\cap\Omega_{R'}} |u|^{r} {\rho}^{2r} \,\d x\notag\\
 &\geq \int_{\mathcal{O}^c_\lambda\cap\Omega_{R'}} \left[ (C_1|\nabla u|^p - \beta_1) {\rho}^{2r} + 2r A(\cdot,\nabla u) : (\nabla{\rho}) \otimes u {\rho}^{2r-1} \right] \,\d x\notag\\
 &\quad + \int_{\mathcal{O}^c_\lambda\cap\Omega_{R'}} |u|^{r} {\rho}^{2r} \,\d x.\notag
\end{align*}
Hence noting that $2(r-1)p'>2(r-1)r'=2r$ and $0 \leq {\rho} \leq 1$ in $\mathbb{R}^d$ and employing Young's inequality for any $\delta_1,\delta_2 > 0$, which will be chosen later, we derive from \eqref{weakform} that
\begin{align}\label{int I,J}
 \MoveEqLeft{
 \int_{\mathcal{O}^c_\lambda\cap\Omega_{R'}} (C_1|\nabla u|^p-\beta_1) {\rho}^{2r} \,\d x + \int_{\mathcal{O}^c_\lambda\cap\Omega_{R'}} |u|^{r} {\rho}^{2r} \,\d x
 }\notag\\%
 &\leq 2r\int_{\mathcal{O}^c_\lambda\cap\Omega_{R'}} |A(\cdot,\nabla u)| {\rho}^{2r-1} |u||\nabla{\rho}| \,\d x \notag\\
 &\quad + \int_{\mathcal{O}_\lambda\cap\Omega_{R'}} |f|^{p-1} |\nabla (u{\rho})_\lambda| {\rho}^{2r-1} \,\d x \notag \\
 &\quad + (2r-1) \int_{\mathcal{O}_\lambda\cap\Omega_{R'}} |f|^{p-1} {\rho}^{2r-2} |(u{\rho})_\lambda| |\nabla{\rho}| \,\d x \notag \\
 &\quad + \int_{\mathcal{O}^c_\lambda\cap\Omega_{R'}} |f|^{p-1} |\nabla u| {\rho}^{2r} \,\d x
 + 2r \int_{\mathcal{O}^c_\lambda\cap\Omega_{R'}} |f|^{p-1} {\rho}^{2r-1} |u||\nabla{\rho}| \,\d x \notag\\ 
 &\quad + \int_{\mathcal{O}_\lambda\cap\Omega_{R'}} |g|^{r-1} |(u{\rho})_\lambda| {\rho}^{2r-1} \,\d x\notag\\
 &\quad + \int_{\mathcal{O}^c_\lambda\cap\Omega_{R'}} |g|^{r-1} |u| {\rho}^{2r} \,\d x + \int_{\mathcal{O}_\lambda\cap\Omega_{R'}} |A(\cdot,\nabla u)| |\nabla (u{\rho})_\lambda| {\rho}^{2r-1} \,\d x\notag\\
 &\quad + (2r-1) \int_{\mathcal{O}_\lambda\cap\Omega_{R'}} |A(\cdot,\nabla u)| |(u{\rho})_\lambda| {\rho}^{2r-2} |\nabla{\rho}| \,\d x\notag\\
 &\quad +\int_{\mathcal{O}_\lambda\cap\Omega_{R'}} |u|^{r-1} |(u{\rho})_\lambda| {\rho}^{2r-1} \,\d x\notag\\
 &\leq 2r \int_{\mathcal{O}^c_\lambda\cap\Omega_{R'}} (C_2|\nabla u|^{p-1}+\beta_2) {\rho}^{2r-1} |u| |\nabla{\rho}| \,\d x \notag\\
 &\quad + \int_{\mathcal{O}^c_\lambda\cap\Omega_{R'}} (|f|^{p-1}{\rho}^{2r/p'}) (|\nabla u| {\rho}^{2r/p}) \,\d x + 2r \int_{\mathcal{O}^c_\lambda\cap\Omega_{R'}} (|f|^{p-1} {\rho}^{2r-1}) (|u||\nabla{\rho}|) \,\d x \notag\\
 &\quad + \int_{\mathcal{O}^c_\lambda\cap\Omega_{R'}} (|g|^{r-1}{\rho}^{2r/r'})(|u|{\rho}^{2r/r}) \,\d x\notag\\
 &\quad + \int_{\mathcal{O}_\lambda\cap\Omega_{R'}} (|f|^{p-1} + C_2|\nabla u|^{p-1}+\beta_2) |\nabla (u{\rho})_\lambda| {\rho}^{2r-1} \,\d x\notag\\
 &\quad + (2r-1) \int_{\mathcal{O}_\lambda\cap\Omega_{R'}} (|f|^{p-1}+C_2|\nabla u|^{p-1}+\beta_2) {\rho}^{2r-2} |(u{\rho})_\lambda| |\nabla{\rho}| \,\d x\notag\\
 &\quad + \int_{\mathcal{O}_\lambda\cap\Omega_{R'}} (|u|^{r-1}+|g|^{r-1}) |(u{\rho})_\lambda |{\rho}^{2r-1} \,\d x\notag\\
 &\leq \frac{2rC_2^{p'}\delta_1^{p'}}{p'} \int_{\mathcal{O}^c_\lambda\cap\Omega_{R'}} |\nabla u|^p {\rho}^{(2r-1)p'} \,\d x + \frac{2r\delta_1^{p'}}{p'} \int_{\mathcal{O}^c_\lambda\cap\Omega_{R'}} \beta_2^{p'} {\rho}^{(2r-1)p'} \,\d x\notag\\
 &\quad + \frac{4r}{p\delta_1^p} \int_{\mathcal{O}^c_\lambda\cap\Omega_{R'}} |u|^p |\nabla \rho|^p \,\d x 
 + \frac{1}{p'\delta_2^{p'}} \int_{\mathcal{O}^c_\lambda\cap\Omega_{R'}} |f|^p {\rho}^{2r} \,\d x \notag\\
 &\quad + \frac{\delta_2^p}{p} \int_{\mathcal{O}^c_\lambda\cap\Omega_{R'}} |\nabla u|^p {\rho}^{2r} \,\d x
 + \frac{2r}{p'} \int_{\mathcal{O}^c_\lambda\cap\Omega_{R'}} |f|^p {\rho}^{(2r-1)p'} \,\d x\notag\\
 &\quad + \frac{2r}{p} \int_{\mathcal{O}^c_\lambda\cap\Omega_{R'}} |u|^p |\nabla \rho|^p \,\d x + \frac{1}{r'} \int_{\mathcal{O}^c_\lambda\cap\Omega_{R'}} |g|^r {\rho}^{2r} \,\d x
 + \frac{1}{r} \int_{\mathcal{O}^c_\lambda \cap \Omega_{R'}} |u|^r {\rho}^{2r} \,\d x\notag\\
 &\quad + \int_{\mathcal{O}_\lambda\cap\Omega_{R'}} ( |f|^{p-1}+C_2|\nabla u|^{p-1}+\beta_2 ) |\nabla (u{\rho})_\lambda| {\rho}^{2r-1} \,\d x\notag\\
 &\quad + (2r-1) \int_{\mathcal{O}_\lambda\cap\Omega_{R'}} ( |f|^{p-1}+C_2|\nabla u|^{p-1}+\beta_2 ) {\rho}^{2r-2}|(u{\rho})_\lambda| |\nabla{\rho}| \,\d x\notag\\
 &\quad + \int_{\mathcal{O}_\lambda\cap\Omega_{R'}} (|u|^{r-1}+|g|^{r-1}) |(u{\rho})_\lambda|{\rho}^{2r-1} \,\d x\notag\\%
 &\leq \left( \frac{2rC_2^{p'}\delta_1^{p'}}{p'} + \frac{\delta_2^p}{p} \right) \int_{\mathcal{O}^c_\lambda\cap\Omega_{R'}} |\nabla u|^p {\rho}^{2r} \,\d x\notag\\
 &\quad + \frac{2r}{p}\left( \frac 2 {\delta_1^p} + 1 \right) \int_{\mathcal{O}^c_\lambda\cap\Omega_{R'}} |u|^p |\nabla \rho|^p\,\d x\notag\\
 &\quad +\left( \frac{1}{p'\delta_2^{p'}}+\frac{2r}{p'} \right) \int_{\mathcal{O}^c_\lambda\cap\Omega_{R'}} |f|^p{\rho}^{2r} \,\d x + \frac{2r\delta_1^{p'}}{p'} \int_{\mathcal{O}^c_\lambda\cap\Omega_{R'}} \beta_2^{p'} {\rho}^{(2r-1)p'} \,\d x\notag\\
 &\quad +\frac{1}{r'} \int_{\mathcal{O}^c_\lambda\cap\Omega_{R'}} |g|^r {\rho}^{2r} \,\d x + \frac{1}{r} \int_{\mathcal{O}^c_\lambda\cap\Omega_{R'}} |u|^r {\rho}^{2r} \,\d x\notag\\
 &\quad +\int_{\mathcal{O}_\lambda\cap\Omega_{R'}} ( |f|^{p-1}+C_2|\nabla u|^{p-1}+\beta_2 ) |\nabla (u{\rho})_\lambda| {\rho}^{2r-1} \,\d x\notag\\
 &\quad + (2r-1) \int_{\mathcal{O}_\lambda\cap\Omega_{R'}} ( |f|^{p-1}+C_2|\nabla u|^{p-1}+\beta_2 ) {\rho}^{2r-2} |(u{\rho})_\lambda| |\nabla{\rho}| \,\d x\notag\\
 &\quad + \int_{\mathcal{O}_\lambda\cap\Omega_{R'}} (|u|^{r-1}+|g|^{r-1}) |(u{\rho})_\lambda| {\rho}^{2r-1} \,\d x\notag\\%
 &=:I_1+I_2+I_3+I_4+I_5+I_6+J_1+J_2+J_3.
\end{align}

We can estimate $J_1$ as in the proof of~\cite[Proposition 4.1]{BuSch16},  but we also give a detail for the completeness. Set $G := |f|^{p-1} + C_2|\nabla u|^{p-1}+\beta_2$. For each $\alpha \in (0,1)$ fixed, using Proposition \ref{prop:CubeSplit} and noting that $\sum_{j \in A_i} \psi_j \equiv 1$ on $Q_i$, one can verify that
\begin{align*}
 \MoveEqLeft{
 \int_{\mathcal{O}_\lambda\cap\Omega_{R'}} |G| |\nabla(u{\rho})_\lambda| {\rho}^{2r-1} \,\d x
 }\\
 &= \sum_i \int_{Q_i} |G| \left| \sum_{j\in A_i}\overline{(u{\rho})}_j\nabla\psi_j \right| {\rho}^{2r-1} \,\d x\\
 &= \sum_i\int_{Q_i} |G| \left| \sum_{j\in A_i} \left\{ \overline{(u{\rho})}_j - \overline{(u{\rho})}_i \right\} \nabla\psi_j \right| {\rho}^{2r-1} \,\d x\\
 &\stackrel{(**)}\leq C \sum_i \sum_{j\in A_i} \int_{Q_i} |G| \left( \fint_{(3/2)Q_j}|\nabla (u{\rho})|\,\d x + \fint_{(3/2)Q_i} |\nabla (u{\rho})| \,\d x \right)  {\rho}^{2r-1} \,\d x\\
 &= C \sum_i \sum_{j\in A_i} |Q_i| \left( \fint_{Q_i}|G|{\rho}^{2r-1}\,\d x \right)\\
 &\qquad \times \left( \fint_{(3/2)Q_j} |\nabla (u{\rho})| \,\d x + \fint_{(3/2)Q_i} |\nabla (u{\rho})| \,\d x \right)\\
\prf{ &= C \sum_i \sum_{j\in A_i}|Q_i| \left( \fint_{Q_i} \frac{|G| {\rho}^{2r-1}}{(Mh_\delta)^{\alpha/p}} (Mh_\delta)^{\alpha/p} \,\d x \right)\\
 &\qquad \times \left( \fint_{(3/2)Q_j} \frac{|\nabla (u{\rho})|}{(Mh_\delta)^{\alpha/p'}}(Mh_\delta)^{\alpha/p'} \,\d x + \fint_{(3/2)Q_i}\frac{|\nabla (u{\rho})|}{(Mh_\delta)^{\alpha/p'}}(Mh_\delta)^{\alpha/p'} \,\d x \right)\\}
 &\leq C \sum_i \sum_{j\in A_i} |Q_i| \left( \fint_{Q_i} \frac{|G|^{p'}{\rho}^{(2r-1)p'}}{(Mh_\delta)^{\alpha p'/p}} \,\d x \right)^{1/p'} \left( \fint_{6Q_i}(Mh_\delta)^{\alpha} \,\d x \right)^{1/p}\\
 &\qquad \times \left\{ \left( \fint_{(3/2)Q_j} \frac{|\nabla (u{\rho})|^p}{(Mh_\delta)^{\alpha p/p'}} \,\d x \right)^{1/p} + \left( \fint_{(3/2)Q_i} \frac{|\nabla (u{\rho})|^p}{(Mh_\delta)^{\alpha p/p'}} \,\d x \right)^{1/p} \right\}\\
 &\qquad \times \left( \fint_{6Q_i}(Mh_\delta)^{\alpha } \,\d x \right)^{1/p'}\\
 &\leq C \sum_i \sum_{j\in A_i} |Q_i| \left( \fint_{Q_i}\frac{|G|^{p'}{\rho}^{(2r-1)p'}}{(Mh_\delta)^{\alpha p'/p}} \,\d x \right)^{1/p'} \left( \fint_{6Q_i} (Mh_\delta)^{\alpha } \,\d x \right)\\
 &\qquad \times \left( \fint_{(3/2)Q_j} \frac{|\nabla (u{\rho})|^p}{(Mh_\delta)^{\alpha p/p'}} \,\d x + \fint_{(3/2)Q_i} \frac{|\nabla (u{\rho})|^p}{(Mh_\delta)^{\alpha p/p'}} \,\d x \right)^{1/p}.
\end{align*}
Here we employed~\cite[Lemma 3.1]{BuSch16} along with (vi) of Proposition \ref{prop:CubeSplit}, in particular, $\mathrm{diam}\,(Q_i) |\nabla \psi_i| \leq c(d)$, in order to derive the first inequality $(**)$. Moreover, we also used the fact that $(3/2)Q_j \subset 6Q_i$ if $j \in A_i$. Now, from (ii) of Proposition \ref{prop:CubeSplit}, we find that $9Q_i \cap \mathcal{O}_\lambda^c\neq\emptyset$, and hence, for a.e.~$x_0 \in 9Q_i\cap\mathcal{O}_\lambda^c$, it follows from the definition of $\mathcal{O}_\lambda$ that
\begin{align}\label{ineq:techofBSProp4.1}
 \fint_{6Q_i} (Mh_\delta)^{\alpha } \,\d x 
 &\leq \left(\frac32\right)^d \fint_{9Q_i} (Mh_\delta)^{\alpha} \,\d x \notag\\
 &\leq CM[(Mh_\delta)^\alpha](x_0) 
 \stackrel{(*)}\leq CA(Mh_\delta)^\alpha(x_0)
 \leq CA\lambda^\alpha.
\end{align}
Here we used the fact that $(Mh_\delta)^\alpha \in \mathcal{A}_1$ to derive the inequality $(*)$ of the above (see Lemma \ref{lem:Muckenhoupt} and Definition \ref{def:Muckenhoupt}). Hence it yields
\begin{align*}
 \MoveEqLeft{
 \int_{\mathcal{O}_\lambda\cap\Omega_{R'}} |G| |\nabla(u{\rho})_\lambda| {\rho}^{2r-1} \,\d x
 }\\
 &\leq C \sum_i \sum_{j\in A_i} |Q_i| \lambda^\alpha \left( \fint_{Q_i} \frac{|G|^{p'} {\rho}^{(2r-1)p'}}{(Mh_\delta)^{\alpha p'/p}} \,\d x \right)^{1/p'}\\
 &\qquad \times \left( \fint_{(3/2)Q_j} \frac{|\nabla (u{\rho})|^p}{(Mh_\delta)^{\alpha p/p'}} \,\d x + \fint_{(3/2)Q_i} \frac{|\nabla (u{\rho})|^p}{(Mh_\delta)^{\alpha p/p'}} \,\d x \right)^{1/p}\\
 &= C \sum_i \sum_{j\in A_i} |Q_i| \left( \fint_{Q_i} \frac{\lambda^{\alpha p'/p}|G|^{p'} {\rho}^{(2r-1)p'}}{(Mh_\delta)^{\alpha p'/p}} \,\d x \right)^{1/p'}\\
 &\qquad \times \left( \fint_{(3/2)Q_j} \frac{\lambda^{\alpha p/p'} |\nabla (u{\rho})|^p}{(Mh_\delta)^{\alpha p/p'}} \,\d x + \fint_{(3/2)Q_i} \frac{\lambda^{\alpha p/p'}|\nabla (u{\rho})|^p}{(Mh_\delta)^{\alpha p/p'}} \,\d x \right)^{1/p}\\
 &\leq C \left( \int_{\mathcal{O}_\lambda\cap\Omega_{R'}} \frac{\lambda^{\alpha p'/p}|G|^{p'} {\rho}^{(2r-1)p'}}{(Mh_\delta)^{\alpha p'/p}} \,\d x + \int_{\mathcal{O}_\lambda\cap\Omega_{R'}} \frac{\lambda^{\alpha p/p'}|\nabla (u{\rho})|^p}{(Mh_\delta)^{\alpha p/p'}} \,\d x \right).
\end{align*}
Thus we obtain
\begin{align}\label{J_1}
J_1 &\leq C \left(
 \int_{\mathcal{O}_\lambda\cap\Omega_{R'}} \frac{\lambda^{\alpha p'/p}(|f|^{p}+C_2|\nabla u|^p+\beta_2^{p'}) {\rho}^{2r}}{(Mh_\delta)^{\alpha p'/p}} \, \d x \right.\notag\\
 &\quad \left.+ \int_{\mathcal{O}_\lambda\cap\Omega_{R'}}
 \frac{\lambda^{\alpha p/p'}|\nabla u|^{p}{\rho}^{p}}{(Mh_\delta)^{\alpha p/p'}} \, \d x + \int_{\mathcal{O}_\lambda\cap\Omega_{R'}} \frac{\lambda^{\alpha p/p'}|u|^{p}|\nabla {\rho}|^p}{(Mh_\delta)^{\alpha p/p'}} \,\d x \right).
\end{align}

Let us move on to an estimate for the term $J_2$. Letting $G \in L^{p'}_\omega(\Omega;\mathbb{R}^N)$ and recalling $\supp \psi_j \subset (9/8)Q_j$, we deduce that
\begin{align*}
\MoveEqLeft{
 \int_{\mathcal{O}_\lambda\cap\Omega_{R'}} |G| {\rho}^{2r-2} |(u{\rho})_\lambda| |\nabla{\rho}| \,\d x
}\\
&\leq \sum_i \left( \int_{Q_i} |G| {\rho}^{2r-2} |\nabla{\rho}| \,\d x \right) \left( \sum_{j\in A_i} |\overline{(u{\rho})}_j| \right)\\
&\leq \sum_i |Q_i| \left( \fint_{Q_i} |G| {\rho}^{2r-2} |\nabla{\rho}| \,\d x \right) \left( \sum_{j\in A_i} \fint_{(9/8)Q_j} |u| {\rho} \,\d x \right)\\
 &\leq \sum_i \sum_{j\in A_i} |Q_i| \left( \frac{r'}{p'} \fint_{Q_i} |G|^{p'/r'} {\rho}^{(2r-2)p'/r'} \,\d x \right.\\
 &\quad \left. + \frac{p'-r'}{p'} \fint_{Q_i} |\nabla{\rho}|^{p'/(p'-r')} \,\d x \right) \left( \fint_{(9/8)Q_j} |u| {\rho} \,\d x \right)\\
 &= \sum_i \sum_{j\in A_i} |Q_i| \left( \frac{r'}{p'} \fint_{Q_i} \frac{|G|^{p'/r'} {\rho}^{(2r-2)p'/r'}}{(Mh_\delta)^{\alpha /r}}(Mh_\delta)^{\alpha /r} \,\d x \right.\\
 &\quad + \left. \frac{p'-r'}{p'} \fint_{Q_i} \frac{|\nabla{\rho}|^{p'/(p'-r')}}{(Mh_\delta)^{\alpha /r}}(Mh_\delta)^{\alpha /r} \,\d x \right)\\
 &\quad \times \left( \fint_{(9/8)Q_j} \frac{|u|{\rho}}{(Mh_\delta)^{\alpha  /r'}}(Mh_\delta)^{\alpha /r'} \,\d x \right).
\end{align*}
Since $(2r-2)p' > 2r$, we have
\begin{align*}
 \MoveEqLeft{
\fint_{Q_i} \frac{|G|^{p'/r'}{\rho}^{(2r-2)p'/r'}}{(Mh_\delta)^{\alpha /r}}(Mh_\delta)^{\alpha /r} \,\d x
 }\\
 &\leq \left( \fint_{Q_i} \frac{|G|^{p'}{\rho}^{(2r-2)p'}}{(Mh_\delta)^{\alpha r'/r}} \,\d x \right)^{1/r'} \left( \fint_{Q_i}(Mh_\delta)^{\alpha} \,\d x\right)^{1/r}\\
&\leq 6^{d/r} \left( \fint_{Q_i} \frac{|G|^{p'}{\rho}^{(2r-2)p'}}{(Mh_\delta)^{\alpha r'/r}}\d x \right)^{1/r'} \left( \fint_{6Q_i}(Mh_\delta)^{\alpha} \,\d x \right)^{1/r}\\
&\leq 6^{d/r} \left( \fint_{Q_i} \frac{|G|^{p'}{\rho}^{2r}}{(Mh_\delta)^{\alpha r'/r}} \,\d x \right)^{1/r'} \left(\fint_{6Q_i}(Mh_\delta)^{\alpha} \,\d x \right)^{1/r}.
\end{align*}
Similarly, we also see that
\begin{align*}
 \MoveEqLeft{
 \fint_{Q_i} \frac{|\nabla{\rho}|^{p'/(p'-r')}}{(Mh_\delta)^{\alpha /r}}(Mh_\delta)^{\alpha /r} \,\d x
 }\\
\prf{ &\leq \left( \fint_{Q_i} \frac{|\nabla{\rho}|^{p'r'/(p'-r')}}{(Mh_\delta)^{\alpha r'/r}} \,\d x \right)^{1/r'} \left( \fint_{Q_i}(Mh_\delta)^{\alpha} \,\d x \right)^{1/r}\\}
 &\leq 6^{d/r} \left( \fint_{Q_i} \frac{|\nabla{\rho}|^{p'r'/(p'-r')}}{(Mh_\delta)^{\alpha r'/r}} \,\d x \right)^{1/r'} \left( \fint_{6Q_i} (Mh_\delta)^{\alpha} \,\d x \right)^{1/r}.
\end{align*}
Moreover, by virtue of (iii) of Proposition \ref{prop:CubeSplit}, we find that $(9/8)Q_j \subset 6Q_i$ if $j \in A_i$. Therefore for any $F \in L^1_{{\loc}}(\mathbb{R}^d)$, we have
$$
\fint_{(9/8)Q_j} |F| \,\d x
\leq \left( \frac{6}{9/8} \right)^d 2^d \fint_{6Q_i} |F| \,\d x \quad \mbox{ if } \ j \in A_i.
$$
From this observation, we infer that
\begin{align*}
\MoveEqLeft{
 \fint_{(9/8)Q_j} \frac{|u|{\rho}}{(Mh_\delta)^{\alpha/r'}}(Mh_\delta)^{\alpha/r'} \,\d x
 }\\
 &\leq \left( \fint_{(9/8)Q_j} \frac{|u|^r{\rho}^{r}}{(Mh_\delta)^{\alpha r/r'}} \,\d x \right)^{1/r} \left( \fint_{(9/8)Q_j}(Mh_\delta)^{\alpha} \,\d x \right)^{1/r'}\\
 &\leq C \left( \fint_{(9/8)Q_j} \frac{|u|^r{\rho}^{r}}{(Mh_\delta)^{\alpha r/r'}} \,\d x \right)^{1/r} \left( \fint_{6Q_i} (Mh_\delta)^{\alpha} \,\d x \right)^{1/r'}.
\end{align*}
Here, using (iv) and (v) of Proposition \ref{prop:CubeSplit}, we also remark that
$$
\sum_i \sum_{j\in A_i} |Q_i| \fint_{Q_i} |F|\chi_{\Omega_{R'}} \,\d x
\leq 4^d \sum_i \int_{Q_i} |F|\chi_{\Omega_{R'}} \,\d x = 4^d \int_{\mathcal{O}_\lambda\cap\Omega_{R'}} |F| \,\d x
$$
for $F \in L^1_{\loc}(\mathbb{R}^d)$. Furthermore, 
as in deriving \eqref{poco}, we also note that
\begin{align*}
\sum_i \sum_{j\in A_i} |Q_i| \fint_{(9/8)Q_j} |F|\chi_{\Omega_{R'}} \,\d x
&\leq \left( 2 \cdot \frac{8}{9} \right)^d \sum_i\sum_{j\in A_i} \int_{(9/8)Q_j} |F|\chi_{\Omega_{R'}} \,\d x\\
&\leq 4^d \left(\frac{16}{9}\right)^d \sum_j \int_{(9/8)Q_j} |F|\chi_{\Omega_{R'}} \,\d x\\
&\leq 4^{2d} \left(\frac{16}{9}\right)^d \int_{\mathcal{O}_\lambda\cap\Omega_{R'}} |F| \,\d x.
\end{align*}
Thus one obtains
\begin{align*}
 \MoveEqLeft{
 \int_{\mathcal{O}_\lambda\cap\Omega_{R'}} |G| {\rho}^{2r-2} |(u{\rho})_\lambda| |\nabla{\rho}| \,\d x
 }\\
 \prf{&\leq \sum_i \sum_{j\in A_i} |Q_i| \left( \frac{r'}{p'} \fint_{Q_i} \frac{|G|^{p'/r'} {\rho}^{(2r-2)p'/r'}}{(Mh_\delta)^{\alpha /r}} (Mh_\delta)^{\alpha /r} \,\d x \right.\\
 &\quad + \left. \frac{p'-r'}{p'} \fint_{Q_i} \frac{|\nabla{\rho}|^{p'/(p'-r')}}{(Mh_\delta)^{\alpha /r}}(Mh_\delta)^{\alpha /r} \,\d x \right)\\
 &\qquad\times \left(\fint_{(9/8)Q_j} \frac{|u|{\rho}}{(Mh_\delta)^{\alpha /r'}}(Mh_\delta)^{\alpha /r'} \,\d x\right)\\}
 &\leq C\sum_i\sum_{j\in A_i} |Q_i| \left(\fint_{Q_i} \frac{|G|^{p'}{\rho}^{2r}}{(Mh_\delta)^{\alpha r'/r}} \,\d x\right)^{1/r'}\\
 &\qquad\times \left(\fint_{(9/8)Q_j} \frac{|u|^r{\rho}^{r}}{(Mh_\delta)^{\alpha r/r'}}\,\d x\right)^{1/r} \left(\fint_{6Q_i}(Mh_\delta)^{\alpha}\,\d x\right)\\
 &\quad +C\sum_i\sum_{j\in A_i}|Q_i|\left(\fint_{Q_i} \frac{|\nabla{\rho}|^{p'r'/(p'-r')}}{(Mh_\delta)^{\alpha r'/r}}\,\d x\right)^{1/r'}\\
 &\qquad\times\left(\fint_{(9/8)Q_j}\frac{|u|^r{\rho}^{r}}{(Mh_\delta)^{\alpha  r/r'}}\,\d x\right)^{1/r}\left(\fint_{6Q_i}(Mh_\delta)^{\alpha}\,\d x\right)\\
 &\stackrel{\eqref{ineq:techofBSProp4.1}}\leq \prf{C\lambda^\alpha\sum_i\sum_{j\in A_i}|Q_i|\left(\fint_{Q_i} \frac{|G|^{p'}{\rho}^{2r}}{(Mh_\delta)^{\alpha r'/r}}\,\d x\right)^{1/r'}\\
 &\qquad\times\left(\fint_{(9/8)Q_j}\frac{|u|^r\rho^{r}}{(Mh_\delta)^{\alpha r/r'}}\,\d x\right)^{1/r}\\
 &\quad +C\lambda^\alpha\sum_i\sum_{j\in A_i}|Q_i|\left(\fint_{Q_i} \frac{|\nabla{\rho}|^{p'r'/(p'-r')}}{(Mh_\delta)^{\alpha r'/r}}\,\d x\right)^{1/r'}\\
 &\qquad\times\left(\fint_{(9/8)Q_j}\frac{|u|^r\rho^{r}}{(Mh_\delta)^{\alpha  r/r'}}\,\d x\right)^{1/r}\\
 &=} C\sum_i\sum_{j\in A_i}|Q_i|\left(\fint_{Q_i}\frac{\lambda^{\alpha r'/r}|G|^{p'}{\rho}^{2r}}{(Mh_\delta)^{\alpha r'/r}}\,\d x\right)^{1/r'}\\
 &\qquad\times\left(\fint_{(9/8)Q_j}\frac{\lambda^{\alpha r/r'}|u|^r\rho^{r}}{(Mh_\delta)^{\alpha r/r'}}\d x\right)^{1/r}\\
 &\quad +C\sum_i\sum_{j\in A_i}|Q_i|\left(\fint_{Q_i}\frac{\lambda^{\alpha r'/r}|\nabla{\rho}|^{p'r'/(p'-r')}}{(Mh_\delta)^{\alpha r'/r}}\,\d x\right)^{1/r'}\\
 &\qquad\times\left(\fint_{(9/8)Q_j}\frac{\lambda^{\alpha r/r'}|u|^r\rho^{r}}{(Mh_\delta)^{\alpha  r/r'}}\,\d x\right)^{1/r}\\
  &\leq C\sum_i\sum_{j\in A_i}|Q_i|\left\{\fint_{Q_i}\frac{\lambda^{\alpha r'/r}|G|^{p'}{\rho}^{2r}}{(Mh_\delta)^{\alpha r'/r}}\,\d x+\fint_{(9/8)Q_j}\frac{\lambda^{\alpha r/r'}|u|^r\rho^{r}}{(Mh_\delta)^{\alpha r/r'}}\,\d x\right\}\\
 &\quad +C\sum_i\sum_{j\in A_i}|Q_i| \left\{\fint_{Q_i}\frac{\lambda^{\alpha r'/r}|\nabla{\rho}|^{p'r'/(p'-r')}}{(Mh_\delta)^{\alpha r'/r}}\,\d x\right.\\
 &\qquad\left.+\fint_{(9/8)Q_j}\frac{\lambda^{\alpha r/r'}|u|^r\rho^{r}}{(Mh_\delta)^{\alpha  r/r'}}\,\d x\right\}\\
 &\leq C\left(\int_{\mathcal{O}_\lambda\cap\Omega_{R'}}\frac{\lambda^{\alpha r'/r}|G|^{p'}{\rho}^{2r}}{(Mh_\delta)^{\alpha r'/r}}\,\d x+\int_{\mathcal{O}_\lambda\cap\Omega_{R'}}\frac{\lambda^{\alpha r/r'}|u|^r\rho^{r}}{(Mh_\delta)^{\alpha r/r'}}\,\d x\right.\\
 &\quad \left.+\int_{\mathcal{O}_\lambda\cap\Omega_{R'}}\frac{\lambda^{\alpha r'/r}|\nabla{\rho}|^{p'r'/(p'-r')}}{(Mh_\delta)^{\alpha r'/r}}\,\d x\right).
\end{align*}
Hence setting $G = (2r-1)(|f|^{p-1}+C_2|\nabla u|^{p-1}+\beta_2)$, we observe that
\begin{align}\label{J_2}
 J_2&=(2r-1)\int_{\mathcal{O}_\lambda\cap\Omega_{R'}}(|f|^{p-1}+C_2|\nabla u|^{p-1}+\beta_2){\rho}^{2r-2}|(u{\rho})_\lambda||\nabla{\rho}|\,\d x\notag\\
 &\leq C \left( \int_{\mathcal{O}_\lambda\cap\Omega_{R'}}\frac{\lambda^{\alpha r'/r}(|f|^p+C_2|\nabla u|^p+\beta_2^{p'}){\rho}^{2r}}{(Mh_\delta)^{\alpha r'/r}}\,\d x \right.\notag\\
 &\quad + \left. \int_{\mathcal{O}_\lambda\cap\Omega_{R'}}\frac{\lambda^{\alpha r/r'}|u|^r\rho^{r}}{(Mh_\delta)^{\alpha r/r'}}\,\d x + \int_{\mathcal{O}_\lambda\cap\Omega_{R'}}\frac{\lambda^{\alpha r'/r}|\nabla{\rho}|^{p'r'/(p'-r')}}{(Mh_\delta)^{\alpha r'/r}}\,\d x\right).
\end{align}

Finally, we estimate $J_3$. Noting that $0\leq{\rho}\leq 1$ in $\mathbb{R}^d$, we observe that
\begin{align*}
J_3&=\int_{\mathcal{O}_\lambda\cap\Omega_{R'}}(|u|^{r-1}+|g|^{r-1})|(u{\rho})_\lambda|{\rho}^{2r-1}\,\d x\\
&\leq\sum_i\int_{Q_i}(|u|^{r-1}+|g|^{r-1})\left(\sum_{j\in A_i}|\overline{(u{\rho})}_j|\right){\rho}^{2r-1}\,\d x\\
&=\sum_i\sum_{j\in A_i}|Q_i|\left(\fint_{Q_i}(|u|^{r-1}+|g|^{r-1}){\rho}^{2r-1}\,\d x\right)\left(\fint_{(9/8)Q_j}|u|{\rho}\,\d x\right)\\
&=\sum_i\sum_{j\in A_i}|Q_i|\left(\fint_{Q_i}\frac{(|u|^{r-1}+|g|^{r-1}){\rho}^{2r-1}}{(Mh_\delta)^{\alpha/r}}(Mh_\delta)^{\alpha/r}\,\d x\right)\\
&\qquad\times\left(\fint_{(9/8)Q_j}\frac{|u|{\rho}}{(Mh_\delta)^{\alpha/r'}}(Mh_\delta)^{\alpha/r'}\,\d x\right).
\end{align*}
Here we can similarly derive that
\begin{align*}
\MoveEqLeft{
\fint_{Q_i}\frac{(|u|^{r-1}+|g|^{r-1}){\rho}^{2r-1}}{(Mh_\delta)^{\alpha/r}}(Mh_\delta)^{\alpha/r}\,\d x
}\\
 &\leq C\left(\fint_{Q_i}\frac{(|u|^{r}+|g|^r)\rho^{2r}}{(Mh_\delta)^{\alpha r'/r}}\,\d x\right)^{1/r'}\left(\fint_{6Q_i}(Mh_\delta)^{\alpha}\,\d x\right)^{1/r}
\end{align*}
and
\begin{align*}
 \MoveEqLeft{
 \fint_{(9/8)Q_j}\frac{|u|{\rho}}{(Mh_\delta)^{\alpha/r'}}(Mh_\delta)^{\alpha/r'}\,\d x
 }\\
 &\leq C\left(\fint_{(9/8)Q_j}\frac{|u|^r{\rho}^{r}}{(Mh_\delta)^{\alpha r/r'}}\,\d x\right)^{1/r}\left(\fint_{6Q_i}(Mh_\delta)^{\alpha}\,\d x\right)^{1/r'}.
\end{align*}
Thus it follows that
\begin{align}\label{J_3}
 J_3\prf{&\leq\sum_i\sum_{j\in A_i}|Q_i|\left(\fint_{Q_i}\frac{(|u|^{r-1}+|g|^{r-1}){\rho}^{2r-1}}{(Mh_\delta)^{\alpha/r}}(Mh_\delta)^{\alpha/r}\,\d x\right)\notag\\
 &\qquad\times\left(\fint_{(9/8)Q_j}\frac{|u|{\rho}}{(Mh_\delta)^{\alpha/r'}}(Mh_\delta)^{\alpha/r'}\,\d x\right)\notag\\
 &\leq C\sum_i\sum_{j\in A_i}|Q_i|\left(\fint_{Q_i}\frac{(|u|^{r}+|g|^r)\rho^{2r}}{(Mh_\delta)^{\alpha r'/r}}\,\d x\right)^{1/r'}\notag\\
 &\qquad\times\left(\fint_{(9/8)Q_j}\frac{|u|^r{\rho}^{r}}{(Mh_\delta)^{\alpha r/r'}}\,\d x\right)^{1/r}\left(\fint_{6Q_i}(Mh_\delta)^{\alpha}\,\d x\right)\notag\\}
 &\stackrel{\eqref{ineq:techofBSProp4.1}}\leq C\lambda^\alpha\sum_i\sum_{j\in A_i}|Q_i|\left(\fint_{Q_i}\frac{(|u|^{r}+|g|^r)\rho^{2r}}{(Mh_\delta)^{\alpha r'/r}}\,\d x\right)^{1/r'}\left(\fint_{(9/8)Q_j}\frac{|u|^r{\rho}^{r}}{(Mh_\delta)^{\alpha r/r'}}\,\d x\right)^{1/r}\notag\\
\prf{ &= C\sum_i\sum_{j\in A_i}|Q_i|\left(\fint_{Q_i}\frac{\lambda^{\alpha r'/r}(|u|^{r}+|g|^r)\rho^{2r}}{(Mh_\delta)^{\alpha r'/r}}\,\d x\right)^{1/r'}\left(\fint_{(9/8)Q_j}\frac{\lambda^{\alpha r/r'}|u|^r{\rho}^{r}}{(Mh_\delta)^{\alpha r/r'}}\,\d x\right)^{1/r}\notag\\}
 &\leq C\sum_i\sum_{j\in A_i}|Q_i|\left(\fint_{Q_i}\frac{\lambda^{\alpha r'/r}(|u|^{r}+|g|^r)\rho^{2r}}{(Mh_\delta)^{\alpha r'/r}}\,\d x+\fint_{(9/8)Q_j}\frac{\lambda^{\alpha r/r'}|u|^r{\rho}^{r}}{(Mh_\delta)^{\alpha r/r'}}\,\d x\right)\notag\\
 &\leq C\left(\int_{\mathcal{O}_\lambda\cap\Omega_{R'}}\frac{\lambda^{\alpha r'/r}(|u|^{r}+|g|^r)\rho^{2r}}{(Mh_\delta)^{\alpha r'/r}}\,\d x+\int_{\mathcal{O}_\lambda\cap\Omega_{R'}}\frac{\lambda^{\alpha r/r'}|u|^r{\rho}^{r}}{(Mh_\delta)^{\alpha r/r'}}\,\d x\right),
\end{align}
where the last inequality is derived as in \eqref{poco}.

Combining \eqref{int I,J}, \eqref{J_1}, \eqref{J_2} and \eqref{J_3}, we infer that
\prf{\begin{align*}
 \MoveEqLeft{
 \int_{\mathcal{O}^c_\lambda\cap\Omega_{R'}}(C_1|\nabla u|^{p}-\beta_1)\rho^{2r}\,\d x+\frac{1}{r'}\int_{\mathcal{O}^c_\lambda\cap\Omega_{R'}}|u|^{r}\rho^{2r}\,\d x
 }\\
 &\leq I_1+I_2+I_3+I_4+I_5+J_1+J_2+J_3\\
 &\leq\left(\frac{2rC_2\delta_1^{p'}}{p'}+\frac{\delta_2^p}{p}\right)\int_{\mathcal{O}^c_\lambda\cap\Omega_{R'}}|\nabla u|^p{\rho}^{2r}\,\d x+\frac{2r}{p}\left(\frac{2}{\delta_1^p}+1\right)\int_{\mathcal{O}^c_\lambda\cap\Omega_{R'}}|u|^p|\nabla \rho|^p\,\d x\\
 &\quad + \left( \frac{1}{p'\delta_2^{p'}}+\frac{2r}{p'}\right)\int_{\mathcal{O}^c_\lambda\cap\Omega_{R'}}|f|^p{\rho}^{2r}\,\d x+\frac{2r\delta_1^{p'}}{p'}\int_{\mathcal{O}^c_\lambda\cap\Omega_{R'}}\beta_2^{p'}{\rho}^{(2r-1)p'}\,\d x\\
 &\quad +\frac{1}{r'}\int_{\mathcal{O}^c_\lambda\cap\Omega_{R'}}|g|^{r}\rho^{2r}\,\d x\\
 &\quad +C \left(\int_{\mathcal{O}_\lambda\cap\Omega_{R'}}\frac{\lambda^{\alpha p'/p}(|f|^{p}+C_2|\nabla u|^p+\beta_2^{p'}){\rho}^{2r}}{(Mh_\delta)^{\alpha p'/p}}\,\d x\right.\notag\\
 &\qquad\left.+\int_{\mathcal{O}_\lambda\cap\Omega_{R'}}\frac{\lambda^{\alpha p/p'}|\nabla u|^{p}{\rho}^{p}}{(Mh_\delta)^{\alpha p/p'}}\,\d x + \int_{\mathcal{O}_\lambda\cap\Omega_{R'}}\frac{\lambda^{\alpha p/p'}|u|^{p}|\nabla {\rho}|^p}{(Mh_\delta)^{\alpha p/p'}}\,\d x\right.\\
 &\qquad +\int_{\mathcal{O}_\lambda\cap\Omega_{R'}}\frac{\lambda^{\alpha r'/r}(|f|^p+C_2|\nabla u|^p+\beta_2^{p'}){\rho}^{2r}}{(Mh_\delta)^{\alpha r'/r}}\,\d x\\
 &\qquad +\int_{\mathcal{O}_\lambda\cap\Omega_{R'}}\frac{\lambda^{\alpha r/r'}|u|^r\rho^{r}}{(Mh_\delta)^{\alpha r/r'}}\,\d x
 +\int_{\mathcal{O}_\lambda\cap\Omega_{R'}}\frac{\lambda^{\alpha r'/r}|\nabla{\rho}|^{p'r'/(p'-r')}}{(Mh_\delta)^{\alpha r'/r}}\,\d x\\
 &\qquad +\left.\int_{\mathcal{O}_\lambda\cap\Omega_{R'}}\frac{\lambda^{\alpha r'/r}(|u|^{r}+|g|^r)\rho^{2r}}{(Mh_\delta)^{\alpha r'/r}}\,\d x+\int_{\mathcal{O}_\lambda\cap\Omega_{R'}}\frac{\lambda^{\alpha r/r'}|u|^r{\rho}^{r}}{(Mh_\delta)^{\alpha r/r'}}\,\d x\right).
\end{align*}
Therefore we have}
\begin{align}
 \MoveEqLeft{\left[C_1-\left(\frac{2rC_2\delta_1^{p'}}{p'}+\frac{\delta_2^p}{p}\right)\right]\int_{\mathcal{O}^c_\lambda\cap\Omega_{R'}}|\nabla u|^{p}\rho^{2r}\,\d x+\frac{1}{r'}\int_{\mathcal{O}^c_\lambda\cap\Omega_{R'}}|u|^{r}\rho^{2r}\,\d x}\notag\\
 &\leq\int_{\mathcal{O}^c_\lambda\cap\Omega_{R'}}\beta_1\rho^{2r}\,\d x+\frac{2r}{p}\left(\frac{2}{\delta_1^p}+1\right)\int_{\mathcal{O}^c_\lambda\cap\Omega_{R'}}|u|^p |\nabla \rho|^p\,\d x\notag\\
 &\quad +\left(\frac{1}{p'\delta_2^{p'}}+\frac{2r}{p'}\right)\int_{\mathcal{O}^c_\lambda\cap\Omega_{R'}}|f|^{p}{\rho}^{2r}\,\d x\notag\\
 &\quad +\frac{2r\delta_1^{p'}}{p'}\int_{\mathcal{O}^c_\lambda\cap\Omega_{R'}}\beta_2^{p'}{\rho}^{(2r-1)p'}\,\d x+\frac{1}{r'}\int_{\mathcal{O}^c_\lambda\cap\Omega_{R'}}|g|^{r}\rho^{2r}\,\d x\notag\\
 &\quad +C\left(\int_{\mathcal{O}_\lambda\cap\Omega_{R'}}\frac{\lambda^{\alpha p'/p}(|f|^{p}+C_2|\nabla u|^p+\beta_2^{p'})\rho^{2r}}{(Mh_\delta)^{\alpha p'/p}}\,\d x \right.\notag\\
 &\qquad +\int_{\mathcal{O}_\lambda\cap\Omega_{R'}}\frac{\lambda^{\alpha p/p'}|\nabla u|^p{\rho}^{p}}{(Mh_\delta)^{\alpha p/p'}}\,\d x
 +\int_{\mathcal{O}_\lambda\cap\Omega_{R'}}\frac{\lambda^{\alpha p/p'}|u|^{p}|\nabla {\rho}|^p}{(Mh_\delta)^{\alpha p/p'}}\,\d x\notag\\
 &\qquad +\int_{\mathcal{O}_\lambda\cap\Omega_{R'}}\frac{\lambda^{\alpha r'/r}(|f|^p+C_2|\nabla u|^p+\beta_2^{p'}){\rho}^{2r}}{(Mh_\delta)^{\alpha r'/r}}\,\d x\notag\\
 &\qquad +\int_{\mathcal{O}_\lambda\cap\Omega_{R'}}\frac{\lambda^{\alpha r/r'}|u|^r\rho^{r}}{(Mh_\delta)^{\alpha r/r'}}\,\d x 
 +\int_{\mathcal{O}_\lambda\cap\Omega_{R'}}\frac{\lambda^{\alpha r'/r}|\nabla{\rho}|^{p'r'/(p'-r')}}{(Mh_\delta)^{\alpha r'/r}}\,\d x\notag\\
 &\qquad \left. +\int_{\mathcal{O}_\lambda\cap\Omega_{R'}}\frac{\lambda^{\alpha r'/r}(|u|^{r}+|g|^r)\rho^{2r}}{(Mh_\delta)^{\alpha r'/r}}\,\d x
 +\int_{\mathcal{O}_\lambda\cap\Omega_{R'}}\frac{\lambda^{\alpha r'/r}|u|^{r}\rho^{r}}{(Mh_\delta)^{\alpha r'/r}}\,\d x\right).\label{ququ}
\end{align}

Now, let $F \in L^1_{{\loc}}(\mathbb{R}^d)$ and $\vep > 0$. Using Fubini's theorem, we note that
\begin{align*}
 \int_0^\infty\lambda^{-(1+\varepsilon)}\left(\int_{\mathcal{O}_\lambda^c\cap\Omega_{R'}}|F|\,\d x\right)\,\d\lambda
 &=\int_{\Omega_{R'}}|F|\left(\int_{(Mh_\delta)(x)}^\infty\lambda^{-(1+\varepsilon)}\,\d\lambda\right)\,\d x\\
 &=\frac{1}{\varepsilon}\int_{\Omega_{R'}}\frac{|F|}{(Mh_\delta)^\varepsilon}\,\d x.
\end{align*}
Similarly, taking $\beta>0$ so that $\alpha\beta-\varepsilon>0$, we find that
\begin{align*}
\MoveEqLeft{
 \int_0^\infty\lambda^{-(1+\varepsilon)}\left(\int_{\mathcal{O}_\lambda\cap\Omega_{R'}}\lambda^{\alpha\beta}\frac{|F|}{(Mh_\delta)^{\alpha\beta}}\,\d x\right)\,\d\lambda
 }\\
 &=\int_{\Omega_{R'}}\frac{|F|}{(Mh_\delta)^{\alpha\beta}}\left(\int_0^{(Mh_\delta)(x)}\lambda^{\alpha\beta-(1+\varepsilon)}\,\d\lambda\right)\,\d x\\
 &=\frac{1}{\alpha\beta-\varepsilon}\int_{\Omega_{R'}}\frac{|F|}{(Mh_\delta)^{\varepsilon}}\,\d x.
\end{align*}
Set $\overline{(p-1)} := \min\{p-1,\ (p-1)^{-1} \} = \min\{p-1,\ p'-1 \} >0$ and $\overline{(r-1)} := \min\{r-1,\ (r-1)^{-1} \} = \min\{r-1,\ r'-1 \} >0$. Let $\varepsilon \in (0,\alpha\min\{\overline{(p-1)},\overline{(r-1)}\})$ be arbitrarily fixed. Then 
$$
\alpha \min \{ r'/r, \ r/r', \ p'/p, \ p/p'\} - \vep > 0.
$$
Divide both sides of \eqref{ququ} by $\lambda^{1+\varepsilon}$ and integrate it over $(0,\infty)$ in $\lambda$. From the above observations, we deduce that
\begin{align*}
\lefteqn{
 \frac 1 \vep \left[ C_1-\left(\frac{2rC_2\delta_1^{p'}}{p'}+\frac{\delta_2^p}{p}\right)\right]\int_{\Omega_{R'}}\frac{|\nabla u|^{p}\rho^{2r}}{(Mh_\delta)^\varepsilon}\,\d x+\frac{1}{\varepsilon r'}\int_{\Omega_{R'}}\frac{|u|^{r}\rho^{2r}}{(Mh_\delta)^\varepsilon}\,\d x
}\\
 &\leq\frac{1}{\varepsilon}\int_{\Omega_{R'}}\frac{\beta_1\rho^{2r}}{(Mh_\delta)^\varepsilon}\,\d x + \frac{2r}{\vep p}\left(\frac{2}{\delta_1^p}+1\right)\int_{\Omega_{R'}}\frac{|u|^p |\nabla \rho|^p}{(Mh_\delta)^\varepsilon}\,\d x\\
 &\quad +\frac{1}{\varepsilon}\left(\frac{1}{p'\delta_2^{p'}}+\frac{2r}{p'}\right)\int_{\Omega_{R'}}\frac{|f|^{p}{\rho}^{2r}}{(Mh_\delta)^\varepsilon}\,\d x\\
 &\quad +\frac{2r\delta_1^{p'}}{\vep p'}\int_{\Omega_{R'}}\frac{\beta_2^{p'}{\rho}^{(2r-1)p'}}{(Mh_\delta)^\varepsilon}\,\d x+\frac{1}{\varepsilon r'}\int_{\Omega_{R'}}\frac{|g|^r\rho^{2r}}{(Mh_\delta)^\varepsilon}\,\d x\\
 &\quad +\frac{C}{\alpha\overline{(p-1)}-\varepsilon}\left(\int_{\Omega_{R'}}\frac{(|f|^{p}+C_2|\nabla u|^p+\beta_2^{p'})\rho^{2r}}{(Mh_\delta)^{\varepsilon}}\,\d x \right.\\
 &\qquad \left. + \int_{\Omega_{R'}}\frac{|\nabla u|^p{\rho}^{p}}{(Mh_\delta)^{\varepsilon}}\,\d x + \int_{\Omega_{R'}}\frac{|u|^{p}|\nabla {\rho}|^p}{(Mh_\delta)^{\varepsilon}}\,\d x \right) \\
 &\quad + \frac{C}{\alpha\overline{(r-1)}-\varepsilon} \left( \int_{\Omega_{R'}}\frac{(|f|^p+C_2|\nabla u|^p+\beta_2^{p'}){\rho}^{2r}}{(Mh_\delta)^{\varepsilon}}\,\d x \right.\\
 &\qquad + \int_{\Omega_{R'}}\frac{|u|^r\rho^{r}}{(Mh_\delta)^{\varepsilon}}\,\d x + \int_{\Omega_{R'}}\frac{|\nabla{\rho}|^{p'r'/(p'-r')}}{(Mh_\delta)^{\varepsilon}}\,\d x\\
 &\qquad \left. + \int_{\Omega_{R'}}\frac{(|u|^{r}+|g|^r)\rho^{2r}}{(Mh_\delta)^{\varepsilon}}\,\d x + \int_{\Omega_{R'}}\frac{|u|^{r}\rho^{r}}{(Mh_\delta)^{\varepsilon}}\,\d x \right)\\
\prf{ &= \frac{1}{\varepsilon} \int_{\Omega_{R'}}\frac{\beta_1\rho^{2r}}{(Mh_\delta)^\varepsilon}\,\d x + \frac{2r}{\vep p}\left(\frac{2}{\delta_1^p}+1\right)\int_{\Omega_{R'}}\frac{|u|^p|\nabla \rho|^p}{(Mh_\delta)^\varepsilon}\,\d x\\
 &\quad+\frac{1}{\varepsilon}\left(\frac{1}{p'\delta_2^{p'}} + \frac{2r}{p'}\right)\int_{\Omega_{R'}}\frac{|f|^{p}{\rho}^{2r}}{(Mh_\delta)^\varepsilon}\,\d x + \frac{2r\delta_1^{p'}}{\vep p'}\int_{\Omega_{R'}}\frac{\beta_2^{p'}{\rho}^{(2r-1)p'}}{(Mh_\delta)^\varepsilon}\,\d x\\
 &\quad +\left(\frac{C}{\alpha\overline{(p-1)}-\varepsilon}+\frac{C}{\alpha\overline{(r-1)}-\varepsilon}\right)\int_{\Omega_{R'}}\frac{|f|^{p}\rho^{2r}}{(Mh_\delta)^{\varepsilon}}\,\d x\\
 &\quad +\left(\frac{CC_2}{\alpha\overline{(p-1)}-\varepsilon}+\frac{CC_2}{\alpha\overline{(r-1)}-\varepsilon}\right)\int_{\Omega_{R'}}\frac{|\nabla u|^p{\rho}^{2r}}{(Mh_\delta)^{\varepsilon}}\,\d x\\
 &\quad +\frac{C}{\alpha\overline{(p-1)}-\varepsilon}\int_{\Omega_{R'}}\frac{|\nabla u|^p{\rho}^{p}}{(Mh_\delta)^{\varepsilon}}\,\d x\\
 &\quad +\left(\frac{C}{\alpha\overline{(p-1)}-\varepsilon}+\frac{C}{\alpha\overline{(r-1)}-\varepsilon}\right)\int_{\Omega_{R'}}\frac{\beta_2^{p'}{\rho}^{2r}}{(Mh_\delta)^{\varepsilon}}\,\d x\\
 &\quad +\left(\frac{1}{\varepsilon r'}+\frac{C}{\alpha\overline{(r-1)}-\varepsilon}\right)\int_{\Omega_{R'}}\frac{|g|^r\rho^{2r}}{(Mh_\delta)^\varepsilon}\,\d x\\
 &\quad +\frac{C}{\alpha\overline{(p-1)}-\varepsilon}\int_{\Omega_{R'}}\frac{|u|^{p}|\nabla {\rho}|^p}{(Mh_\delta)^{\varepsilon}}\,\d x\\
 &\quad +\frac{C}{\alpha\overline{(r-1)}-\varepsilon}\int_{\Omega_{R'}}\left(\frac{|u|^{r}\rho^{r}}{(Mh_\delta)^{\varepsilon}}+\frac{|u|^{r}\rho^{2r}}{(Mh_\delta)^{\varepsilon}}\right)\,\d x\\
 &\quad +\frac{C}{\alpha\overline{(r-1)}-\varepsilon}\int_{\Omega_{R'}}\frac{|\nabla{\rho}|^{p'r'/(p'-r')}}{(Mh_\delta)^{\varepsilon}}\,\d x\\ }
 &\leq\frac{1}{\varepsilon}\int_{\Omega_{R'}}\frac{\beta_1\rho^{2r}}{(Mh_\delta)^\varepsilon}\,\d x + \left[ \frac{2r}{\vep p}\left(\frac{2}{\delta_1^p}+1\right) + \frac{C}{\alpha\overline{(p-1)}-\varepsilon} \right] \int_{\Omega_{R'}}\frac{|u|^p|\nabla \rho|^p}{(Mh_\delta)^\varepsilon}\,\d x\\
 &\quad +\left[ \frac 1 \vep \left(\frac{1}{p'\delta_2^{p'}}+\frac{2r}{p'}\right)+\frac{C}{\alpha\overline{(p-1)}-\varepsilon}+\frac{C}{\alpha\overline{(r-1)}-\varepsilon}\right]\int_{\Omega_{R'}}\frac{|f|^{p}\rho^{2r}}{(Mh_\delta)^{\varepsilon}}\,\d x\\
 &\quad +\left(\frac{2r\delta_1^{p'}}{\vep p'}+\frac{C}{\alpha\overline{(p-1)}-\varepsilon}+\frac{C}{\alpha\overline{(r-1)}-\varepsilon}\right)\int_{\Omega_{R'}}\frac{\beta_2^{p'}{\rho}^{2r}}{(Mh_\delta)^{\varepsilon}}\,\d x\\
 &\quad +\left(\frac{CC_2}{\alpha\overline{(p-1)}-\varepsilon}+\frac{CC_2}{\alpha\overline{(r-1)}-\varepsilon}\right)\int_{\Omega_{R'}}\frac{|\nabla u|^p{\rho}^{2r}}{(Mh_\delta)^{\varepsilon}}\,\d x\\
 &\quad+\frac{C}{\alpha\overline{(p-1)}-\varepsilon}\int_{\Omega_{R'}}\frac{|\nabla u|^p{\rho}^{p}}{(Mh_\delta)^{\varepsilon}}\,\d x\\
 &\quad +\left(\frac{1}{\varepsilon r'}+\frac{C}{\alpha\overline{(r-1)}-\varepsilon}\right)\int_{\Omega_{R'}}\frac{|g|^r\rho^{2r}}{(Mh_\delta)^\varepsilon}\,\d x\\
 &\quad +\frac{C}{\alpha\overline{(r-1)}-\varepsilon}\int_{\Omega_{R'}}\left(\frac{|u|^{r}\rho^{r}}{(Mh_\delta)^{\varepsilon}}+\frac{|u|^{r}\rho^{2r}}{(Mh_\delta)^{\varepsilon}}\right)\,\d x\\
 &\quad +\frac{C}{\alpha\overline{(r-1)}-\varepsilon}\int_{\Omega_{R'}}\frac{|\nabla{\rho}|^{p'r'/(p'-r')}}{(Mh_\delta)^{\varepsilon}}\,\d x.
\end{align*}
Here we also used the fact that $(2r-1)p' > 2(r-1)r' = 2r$. Consequently, \prf{we obtain
\begin{align*}
 \MoveEqLeft{
 \left[C_1-\left(\frac{2rC_2\delta_1^{p'}}{p'}+\frac{\delta_2^p}{p}\right)-\left(\frac{CC_2\varepsilon}{\alpha\overline{(p-1)}-\varepsilon}+\frac{CC_2\varepsilon}{\alpha\overline{(r-1)}-\varepsilon}\right)\right]\int_{\Omega_{R'}}\frac{|\nabla u|^{p}\rho^{2r}}{(Mh_\delta)^\varepsilon}\,\d x
 }\\
 &\quad +\left(\frac{1}{r'}-\frac{C\varepsilon}{\alpha\overline{(r-1)}-\varepsilon}\right)\int_{\Omega_{R'}}\frac{|u|^{r}\rho^{2r}}{(Mh_\delta)^\varepsilon}\,\d x\\%
 &\leq \frac{C\vep}{\alpha\overline{(p-1)}-\varepsilon}\int_{\Omega_{R'}}\frac{|\nabla u|^p{\rho}^{p}}{(Mh_\delta)^{\varepsilon}}\,\d x+\frac{C\varepsilon}{\alpha\overline{(r-1)}-\varepsilon}\int_{\Omega_{R'}}\frac{|u|^{r}\rho^{r}}{(Mh_\delta)^\varepsilon}\,\d x\\
 &\quad+\int_{\Omega_{R'}}\frac{\beta_1\rho^{2r}}{(Mh_\delta)^\varepsilon}\,\d x+ \left[ \frac{2r}{p}\left(\frac{2}{\delta_1^p}+1\right) + \frac{C\vep}{\alpha\overline{(p-1)}-\varepsilon} \right] \int_{\Omega_{R'}}\frac{|u|^p|\nabla \rho|^p}{(Mh_\delta)^\varepsilon}\,\d x\\
 &\quad +\left(\frac{1}{p'\delta_2^{p'}}+\frac{2r}{p'}+\frac{C\varepsilon}{\alpha\overline{(p-1)}-\varepsilon}+\frac{C\varepsilon}{\alpha\overline{(r-1)}-\varepsilon}\right)\int_{\Omega_{R'}}\frac{|f|^{p}\rho^{2r}}{(Mh_\delta)^{\varepsilon}}\,\d x\\
 &\quad +\left(\frac{2r\delta_1^{p'}}{p'}+\frac{C\varepsilon}{\alpha\overline{(p-1)}-\varepsilon}+\frac{C\varepsilon}{\alpha\overline{(r-1)}-\varepsilon}\right)\int_{\Omega_{R'}}\frac{\beta_2^{p'}{\rho}^{2r}}{(Mh_\delta)^{\varepsilon}}\,\d x\\
 &\quad +\left(\frac{1}{r'}+\frac{C\varepsilon}{\alpha\overline{(r-1)}-\varepsilon}\right)\int_{\Omega_{R'}}\frac{|g|^r\rho^{2r}}{(Mh_\delta)^\varepsilon}\,\d x\\
 &\quad +\frac{C\varepsilon}{\alpha\overline{(r-1)}-\varepsilon}\int_{\Omega_{R'}}\frac{|\nabla{\rho}|^{p'r'/(p'-r')}}{(Mh_\delta)^{\varepsilon}}\,\d x,
\end{align*}
which along with }the fact that ${\rho}|_{\Omega_R}\equiv 1$, $0\leq{\rho}\leq 1$ in $\mathbb{R}^d$ yields
\begin{align*}
 \MoveEqLeft{
 \left[C_1-\left(\frac{2rC_2\delta_1^{p'}}{p'}+\frac{\delta_2^p}{p}\right)-\left(\frac{CC_2\varepsilon}{\alpha\overline{(p-1)}-\varepsilon}+\frac{CC_2\varepsilon}{\alpha\overline{(r-1)}-\varepsilon}\right)\right]\int_{\Omega_{R}}\frac{|\nabla u|^{p}}{(Mh_\delta)^\varepsilon}\,\d x
 }\\
 &\quad +\left(\frac{1}{r'}-\frac{C\varepsilon}{\alpha\overline{(r-1)}-\varepsilon}\right)\int_{\Omega_{R}}\frac{|u|^{r}}{(Mh_\delta)^\varepsilon}\,\d x\\%
 &\leq\frac{C\varepsilon}{\alpha\overline{(p-1)}-\varepsilon}\int_{\Omega_{R'}}\frac{|\nabla u|^p}{(Mh_\delta)^{\varepsilon}}\,\d x+\frac{C\varepsilon}{\alpha\overline{(r-1)}-\varepsilon}\int_{\Omega_{R'}}\frac{|u|^{r}}{(Mh_\delta)^\varepsilon}\,\d x\\
 &\quad +\int_{\Omega_{R'}} \frac{\beta_1}{(Mh_\delta)^\varepsilon}\,\d x + \left[ \frac{2r}{p}\left(\frac{2}{\delta_1^p}+1\right) + \frac{C\vep}{\alpha\overline{(p-1)}-\varepsilon} \right] \int_{\Omega_{R'}}\frac{|u|^p|\nabla \rho|^p}{(Mh_\delta)^\varepsilon}\,\d x\\
 &\quad +\left( \frac{1}{p'\delta_2^{p'}}+\frac{2r}{p'}+\frac{C\varepsilon}{\alpha\overline{(p-1)}-\varepsilon}+\frac{C\varepsilon}{\alpha\overline{(r-1)}-\varepsilon}\right) \int_{\Omega_{R'}}\frac{|f|^{p}}{(Mh_\delta)^{\varepsilon}}\,\d x\\
 &\quad +\left(\frac{2r\delta_1^{p'}}{p'}+\frac{C\varepsilon}{\alpha\overline{(p-1)}-\varepsilon}+\frac{C\varepsilon}{\alpha\overline{(r-1)}-\varepsilon}\right)\int_{\Omega_{R'}}\frac{\beta_2^{p'}}{(Mh_\delta)^{\varepsilon}}\,\d x\\
 &\quad +\left(\frac{1}{r'}+\frac{C\varepsilon}{\alpha\overline{(r-1)}-\varepsilon}\right)\int_{\Omega_{R'}}\frac{|g|^r}{(Mh_\delta)^\varepsilon}\,\d x\\
 &\quad +\frac{C\varepsilon}{\alpha\overline{(r-1)}-\varepsilon}\int_{\Omega_{R'}}\frac{|\nabla{\rho}|^{p'r'/(p'-r')}}{(Mh_\delta)^{\varepsilon}}\,\d x.
\end{align*}

Now, take $\delta_1,\delta_2>0$ small enough so that
$$
C_1-\left(\frac{2rC_2\delta_1^{p'}}{p'}+\frac{\delta_2^p}{p}\right)>0.
$$
Moreover, \prf{we see that
\begin{align*}
C_1-\left(\frac{2rC_2\delta_1^{p'}}{p'}+\frac{\delta_2^p}{p}\right)-\left(\frac{CC_2\varepsilon}{\alpha\overline{(p-1)}-\varepsilon}+\frac{CC_2\varepsilon}{\alpha\overline{(r-1)}-\varepsilon}\right) &\to C_1-\left(\frac{2rC_2\delta_1^{p'}}{p'}+\frac{\delta_2^p}{p}\right),\\
\frac{1}{r'}-\frac{C\varepsilon}{\alpha\overline{(r-1)}-\varepsilon} &\to \frac{1}{r'}
\end{align*}
as $\vep \to 0_+$, and therefore} one can take $\varepsilon_\alpha>0$ such that
\begin{align*}
 C_1-\left(\frac{2rC_2\delta_1^{p'}}{p'}+\frac{\delta_2^p}{p}\right)-\left(\frac{CC_2\varepsilon}{\alpha\overline{(p-1)}-\varepsilon}+\frac{CC_2\varepsilon}{\alpha\overline{(r-1)}-\varepsilon}\right)&>0,\\
 \frac{1}{r'}-\frac{C\varepsilon}{\alpha\overline{(r-1)}-\varepsilon}&>0
\end{align*}
for any $\varepsilon\in(0,\varepsilon_\alpha)$. Hence for each $\varepsilon\in(0,\varepsilon_\alpha)$, we conclude that
\begin{align*}
 \MoveEqLeft{
 \int_{\Omega_R}\frac{|\nabla u|^{p}}{(Mh_\delta)^\varepsilon}\,\d x+\int_{\Omega_{R}}\frac{|u|^{r}}{(Mh_\delta)^\varepsilon}\,\d x
 }\\%
 &\leq C \varepsilon \left( \int_{\Omega_{R'}}\frac{|\nabla u|^p}{(Mh_\delta)^{\varepsilon}}\,\d x + \int_{\Omega_{R'}}\frac{|u|^{r}}{(Mh_\delta)^\varepsilon}\,\d x \right)\\
 &\quad + C (\vep+1) \int_{\Omega_{R'}}\frac{|u|^{p}|\nabla {\rho}|^p}{(Mh_\delta)^{\varepsilon}}\,\d x\\
 &\quad +C\int_{\Omega_{R'}}\frac{\beta_1}{(Mh_\delta)^\varepsilon}\,\d x + C\int_{\Omega_{R'}}\frac{\beta_2^{p'}}{(Mh_\delta)^{\varepsilon}}\,\d x + C\int_{\Omega_{R'}}\frac{|f|^{p}}{(Mh_\delta)^{\varepsilon}}\,\d x\\
 &\quad +C\int_{\Omega_{R'}}\frac{|g|^r}{(Mh_\delta)^\varepsilon}\,\d x+C\varepsilon\int_{\Omega_{R'}}\frac{|\nabla{\rho}|^{p'r'/(p'-r')}}{(Mh_\delta)^{\varepsilon}}\,\d x.
\end{align*}

Now we set $R_n:=R\sum_{i=0}^{n}2^{-i} \in [R, 2R]$ for $n\inz_{\geq 0}$ and let $\zeta_n\in C_c^\infty(\mathbb{R}^d)$ be such that
$$
\zeta_n|_{B_{R_n}}\equiv 1,\quad \mbox{ supp}\,\zeta_n\subset\overline{B_{R_{n+1}}},\quad \sup_{x\inr^d}|\nabla \zeta_n(x)|\leq C\frac{2^{n+1}}{R}
$$
for some constant $C$ independent of $n$ and $R$. Applying the preceding argument to the choice $(R,R',{\rho}):=(R_n,R_{n+1},\zeta_n)$, we find that
\begin{align*}
 \MoveEqLeft{\int_{\Omega_{R_n}}\frac{|\nabla u|^{p}}{(Mh_\delta)^\varepsilon}\,\d x+\int_{\Omega_{R_n}}\frac{|u|^{r}}{(Mh_\delta)^\varepsilon}\,\d x}\\%
 \prf{ &\leq C\varepsilon\left(\int_{\Omega_{R_{n+1}}}\frac{|\nabla u|^p}{(Mh_\delta)^{\varepsilon}}\,\d x+\int_{\Omega_{R_{n+1}}}\frac{|u|^{r}}{(Mh_\delta)^\varepsilon}\,\d x\right)\\
 &\quad + C(\vep+1)\left(\frac{2^{n+1}}{R}\right)^p\int_{\Omega_{R_{n+1}}}\frac{|u|^p}{(Mh_\delta)^\varepsilon}\,\d x\\
 &\quad + C\int_{\Omega_{R_{n+1}}}\frac{\beta_1}{(Mh_\delta)^\varepsilon}\,\d x + C\int_{\Omega_{R_{n+1}}}\frac{\beta_2^{p'}}{(Mh_\delta)^{\varepsilon}}\,\d x\\
 &\quad + C \int_{\Omega_{R_{n+1}}}\frac{|f|^{p}}{(Mh_\delta)^{\varepsilon}}\,\d x + C\int_{\Omega_{R_{n+1}}}\frac{|g|^r}{(Mh_\delta)^\varepsilon}\,\d x\\
 &\quad + C\varepsilon \left(\frac{2^{n+1}}R\right)^{p'r'/(p'-r')} \int_{\Omega_{R_{n+1}}}\frac{1}{(Mh_\delta)^{\varepsilon}}\,\d x\\
 &\leq C\varepsilon\left(\int_{\Omega_{R_{n+1}}}\frac{|\nabla u|^p}{(Mh_\delta)^{\varepsilon}}\,\d x+\int_{\Omega_{R_{n+1}}}\frac{|u|^{r}}{(Mh_\delta)^\varepsilon}\,\d x\right)\\
 &\quad +C(\vep+1)\left(\frac{2^{n+1}}{R}\right)^p\int_{\Omega_{R_{n+1}}}\frac{|u|^p}{(Mh_\delta)^{\varepsilon p/r}}\frac{1}{(Mh_\delta)^{\varepsilon(1-p/r)}}\,\d x\\
 &\quad +C\int_{\Omega_{R_{n+1}}}\frac{\beta_1}{(Mh_\delta)^\varepsilon}\,\d x + C\int_{\Omega_{R_{n+1}}}\frac{\beta_2^{p'}}{(Mh_\delta)^{\varepsilon}}\,\d x\\
 &\quad +C\int_{\Omega_{R_{n+1}}}\frac{|f|^{p}}{(Mh_\delta)^{\varepsilon}}\,\d x + C\int_{\Omega_{R_{n+1}}}\frac{|g|^r}{(Mh_\delta)^\varepsilon}\,\d x\\
 &\quad +C\varepsilon \left(\frac{2^{n+1}}R\right)^{p'r'/(p'-r')} \int_{\Omega_{R_{n+1}}} \frac{1}{(Mh_\delta)^{\varepsilon}}\,\d x\\}
 &\leq C\varepsilon\left(\int_{\Omega_{R_{n+1}}}\frac{|\nabla u|^p}{(Mh_\delta)^{\varepsilon}}\,\d x+\int_{\Omega_{R_{n+1}}}\frac{|u|^{r}}{(Mh_\delta)^\varepsilon}\,\d x\right)\\
 &\quad +C\left(\frac{2^{n+1}}{R}\right)^{p}\left(\int_{\Omega_{R_{n+1}}}\frac{|u|^r}{(Mh_\delta)^{\varepsilon}}\,\d x\right)^{p/r}\left(\int_{\Omega_{R_{n+1}}}\frac{1}{(Mh_\delta)^{\varepsilon}}\,\d x\right)^{(r-p)/r}\\
 &\quad +C\int_{\Omega_{R_{n+1}}}\frac{\beta_1}{(Mh_\delta)^\varepsilon}\,\d x + C\int_{\Omega_{R_{n+1}}}\frac{\beta_2^{p'}}{(Mh_\delta)^{\varepsilon}}\,\d x\\
 &\quad +C\int_{\Omega_{R_{n+1}}}\frac{|f|^{p}}{(Mh_\delta)^{\varepsilon}}\,\d x + C\int_{\Omega_{R_{n+1}}}\frac{|g|^r}{(Mh_\delta)^\varepsilon}\,\d x\\
 &\quad+C\varepsilon \left(\frac{2^{n+1}}R\right)^{p'r'/(p'-r')} \int_{\Omega_{R_{n+1}}}\frac{1}{(Mh_\delta)^{\varepsilon}}\,\d x\\
 &\leq C\varepsilon\left(\int_{\Omega_{R_{n+1}}}\frac{|\nabla u|^p}{(Mh_\delta)^{\varepsilon}}\,\d x + \int_{\Omega_{R_{n+1}}}\frac{|u|^{r}}{(Mh_\delta)^\varepsilon}\,\d x \right) + \nu \int_{\Omega_{R_{n+1}}}\frac{|u|^{r}}{(Mh_\delta)^\varepsilon}\,\d x \\
 &\quad + C(\vep + \nu^{-p/(r-p)}) \delta^{-\vep} \left(\frac{2^{n+1}}{R}\right)^{pr/(r-p)}|\Omega_{2R}| \\
 &\quad + C\int_{\Omega_{R_{n+1}}}\frac{\beta_1}{(Mh_\delta)^\varepsilon}\,\d x + C \int_{\Omega_{R_{n+1}}}\frac{\beta_2^{p'}}{(Mh_\delta)^{\varepsilon}}\,\d x\\
 &\quad + C\int_{\Omega_{R_{n+1}}}\frac{|f|^{p}}{(Mh_\delta)^{\varepsilon}}\,\d x + C\int_{\Omega_{R_{n+1}}}\frac{|g|^r}{(Mh_\delta)^\varepsilon}\,\d x 
\end{align*}
for $\nu > 0$, which will be chosen later. Here the last inequality follows from the fact that $Mh_\delta\geq 1$ and $p'r'/(p'-r')=pr/(r-p)$ as well as Young's inequality. For each $n\inz_{\geq 0}$, setting
$$
H_n :=\int_{\Omega_{R_n}}\frac{|\nabla u|^{p}}{(Mh_\delta)^\varepsilon}\,\d x+\int_{\Omega_{R_n}}\frac{|u|^{r}}{(Mh_\delta)^\varepsilon}\,\d x,
$$
we obtain
\begin{align*}
 H_n \prf{&\leq \left(C\varepsilon+\nu\right) H_{n+1}+C(\vep + \nu^{-p/(r-p)}) \delta^{-\vep} \left(\frac{2^{n+1}}{R}\right)^{pr/(r-p)}|\Omega_{2R}|\\
&\quad +C\int_{\Omega_{R_{n+1}}}\frac{\beta_1}{(Mh_\delta)^\varepsilon}\,\d x +C\int_{\Omega_{R_{n+1}}}\frac{\beta_2^{p'}}{(Mh_\delta)^{\varepsilon}}\,\d x\\
&\quad + C\int_{\Omega_{R_{n+1}}}\frac{|f|^{p}}{(Mh_\delta)^{\varepsilon}}\,\d x + C\int_{\Omega_{R_{n+1}}}\frac{|g|^r}{(Mh_\delta)^\varepsilon}\,\d x\\}
 &\leq \left(C\varepsilon+\nu\right) H_{n+1} + C(\vep + \nu^{-p/(r-p)}) \delta^{-\vep} \left(\frac{2^{n+1}}{R}\right)^{pr/(r-p)}|\Omega_{2R}| \\
 &\quad + C\int_{\Omega_{2R}}\frac{\beta_1}{(Mh_\delta)^\varepsilon}\,\d x + C\int_{\Omega_{2R}}\frac{\beta_2^{p'}}{(Mh_\delta)^{\varepsilon}}\,\d x \\
 &\quad + C\int_{\Omega_{2R}}\frac{|f|^{p}}{(Mh_\delta)^{\varepsilon}}\,\d x + C\int_{\Omega_{2R}}\frac{|g|^r}{(Mh_\delta)^\varepsilon}\,\d x\\
 &=: \left(C\varepsilon+\nu\right) H_{n+1}+C_{R} 2^{npr/(r-p)}+F_0,
\end{align*}
where $C_{R} = C (\vep + \nu^{-p/(r-p)}) \delta^{-\vep} R^{d - \frac{pr}{r-p}}>0$ is a constant independent of $n$ and $F_0 > 0$ is a constant independent of $n,R$ and given as
\begin{align*}
F_0 &:= C \left( \int_{\Omega_{2R}} \beta_1\,\d x + \int_{\Omega_{2R}} \beta_2^{p'}\,\d x + \int_{\Omega_{2R}}\frac{|f|^{p}}{(Mh_\delta)^{\varepsilon}}\,\d x + \int_{\Omega_{2R}}\frac{|g|^r}{(Mh_\delta)^\varepsilon}\,\d x \right).
\end{align*}
Therefore setting $\gamma:=2^{pr/(r-p)} > 1$, we get
$$
H_n\leq \left(C\varepsilon+\nu\right) H_{n+1}+C_{R}\gamma^n+F_0
$$
for any $n \inz_{\geq 0}$, and moreover, we deduce that
\begin{align*}
 H_0 \prf{&\leq \left(C\varepsilon+\nu\right) H_{1}+C_{R}+F_0\\
 &\leq \left(C\varepsilon+\nu\right)^nH_n+C_{R} \sum_{i=0}^{n-1}\left(C\varepsilon+\nu\right)^i\gamma^i+F_0\sum_{i=0}^{n-1}\left(C\varepsilon+\nu\right)^i\\}
 &\leq \left(C\varepsilon+\nu\right)^n\left(\int_{\Omega_{2R}}\frac{|\nabla u|^{p}}{(Mh_\delta)^{\varepsilon}}\,\d x+\int_{\Omega_{2R}}\frac{|u|^{r}}{(Mh_\delta)^{\varepsilon}}\,\d x\right)\\
 &\quad + C_{R}\frac{1-(C\varepsilon+\nu)^n\gamma^n}{1-(C\varepsilon+\nu)\gamma}+F_0\frac{1-(C\varepsilon+\nu)^n}{1-(C\varepsilon+\nu)}\\
 &\to \frac{C_{R}}{1-(C\varepsilon+\nu)\gamma}+\frac{F_0}{1-(C\varepsilon+\nu)}\quad\mbox{as }n\to\infty
\end{align*}
by setting $\nu = 1/(2\gamma)$ and $\vep \in (0,\vep_1)$ with the choice,
$$
\varepsilon_1:=\min\left\{ (2C\gamma)^{-1},\varepsilon_\alpha\right\} > 0
$$
(then $C\vep + \nu < (C\vep+\nu)\gamma < 1$). Thus the desired conclusion follows. This completes the proof.
\end{proof}

\section{Proofs of Theorem \ref{thm:elliplocDS} and Corollary \ref{C:main}}\label{S:proof}

We are now in a position to prove Theorem \ref{thm:elliplocDS}.

\begin{proof}[Proof of Theorem {\rm \ref{thm:elliplocDS}}]
Let $\varepsilon_0>0$ be such that
$$
\varepsilon_0:=\min\left\{\varepsilon_1,\frac{p}{r}\right\} \in (0,1),
$$
where $\varepsilon_1$ is the constant given in Proposition \ref{prop:LocEstimate}. Let $\varepsilon\in(0,\varepsilon_0)$ be fixed arbitrarily. Set $q := p - \varepsilon \in (p-1,p)$ and $s := r(p-\vep)/p \in (r-1,r)$. Moreover, let $f\in L_{{\loc}}^q(\overline{\Omega};\mathbb{R}^{d\times N})$, $g\in L_{{\loc}}^s(\overline{\Omega};\mathbb{R}^{N})$ and fix $R>0$. 

For each $k \inz_{\geq 1}$ (large enough if necessary), we define
\begin{align*}
f^k &:= \min\{ k,|f|\} \frac{f}{|f|} \, \chi_{B_k}\in L^p(\Omega;\mathbb{R}^{d\times N}),\\
g^k &:= \min\{ k,|g|\} \frac{g}{|g|} \, \chi_{B_k}\in L^r(\Omega;\mathbb{R}^{N})
\end{align*}
and denote by $u^k\in X^{p,r}(\Omega;\mathbb{R}^N)$ the unique weak solution to the approximate problem ${\sf (P)}_k$,
$$
\left\{
\begin{alignedat}{4}
&-\,\dv \,A(x,\nabla u^k)+|u^k|^{r-2}u^k = -\,\dv \,(|f^k|^{p-2}f^k)+|g^k|^{r-2}g^k \ &&\mbox{ in } \
\Omega,\\&
u^k= 0 \ &&\mbox{ on } \ \partial \Omega.
\end{alignedat}
\right.
$$
We apply Proposition \ref{prop:LocEstimate} to \Pk \ along with 
$$
\delta := 1 \quad \mbox{ and } \quad h := |f|+|g|^{s/q} \in L^q_{\rm loc}(\overline{\Omega}).
$$
It then follows that
\begin{align}
\MoveEqLeft{\int_{\Omega_{R}}\frac{|\nabla u^k|^{p}}{(M[{h}\chi_{\Omega_{2R}}]+1)^\varepsilon}\,\d x+\int_{\Omega_{R}}\frac{|u^k|^{r}}{(M[{h}\chi_{\Omega_{2R}}]+1)^\varepsilon}\,\d x}\notag\\
&\leq C_0 \left( \int_{\Omega_{2R}}\frac{|f^k|^{p}}{(M[{h}\chi_{\Omega_{2R}}]+1)^\varepsilon}\,\d x + \int_{\Omega_{2R}}\frac{|g^k|^{r}}{(M[{h}\chi_{\Omega_{2R}}]+1)^\varepsilon}\,\d x \right.\notag\\
&\quad + \left. \int_{\Omega_{2R}}\frac{\beta_1}{(M[{h}\chi_{\Omega_{2R}}]+1)^\varepsilon}\,\d x + \int_{\Omega_{2R}}\frac{\beta_2^{p'}}{(M[{h}\chi_{\Omega_{2R}}]+1)^\varepsilon}\,\d x \right)\notag\\
&\quad + C_R. \label{loc_en_est_k}
\end{align}
Here and henceforth, we set a weight function as
\begin{equation}\label{omega}
\omega := (M[{h}\chi_{\Omega_{2R}}]+1)^{-\varepsilon} = M[{h}\chi_{\Omega_{2R}}+1]^{-\varepsilon}. 
\end{equation}

Then we have
\begin{lemma}\label{lem:ellipuksomebddness}
For each $R>0$, $(\nabla u^k)_k$ is bounded both in $L^p_\omega(\Omega_R;\mathbb{R}^{d\times N})$ and in $L^q(\Omega_R;\mathbb{R}^{d\times N})$. Moreover, $(u^k)_k$ is bounded both in $L^r_\omega(\Omega_R;\mathbb{R}^N)$ and in $L^s(\Omega_R;\mathbb{R}^N)$. 
\end{lemma}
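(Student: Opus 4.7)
The plan is to run the weighted estimate \eqref{loc_en_est_k} from Proposition \ref{prop:LocEstimate}, show that the right-hand side is bounded uniformly in $k$, and then pass from the weighted bounds to the unweighted bounds via Lemma \ref{lem:Emb_Mf}.

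First, I would derive pointwise comparisons on $\Omega_{2R}$ for the weight $\omega = (M[h\chi_{\Omega_{2R}}]+1)^{-\varepsilon}$. Since $h = |f|+|g|^{s/q} \in L^1_{\rm loc}(\overline{\Omega})$ (indeed $|f| \in L^q_{\rm loc}$ and $|g|^{s/q} \in L^q_{\rm loc}$), Lebesgue's differentiation theorem gives $M[h\chi_{\Omega_{2R}}] \geq h$ a.e.\ on $\Omega_{2R}$. Using $|f^k| \leq |f| \leq h$, I split into the regions $\{h \geq 1\}$ and $\{h < 1\}$: on the former, $\omega \leq h^{-\varepsilon}$, so $|f^k|^p\omega \leq |f|^p h^{-\varepsilon} \leq |f|^{p-\varepsilon} = |f|^q$; on the latter, $|f| < 1$, hence $|f^k|^p\omega \leq |f|^p \leq |f|^q$. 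For the $g$-term, the key algebraic identity is $(s/q)\varepsilon = r\varepsilon/p$, which gives $|g^k|^r h^{-\varepsilon} \leq |g|^{r-r\varepsilon/p} = |g|^s$ on $\{h \geq 1\}$, and $|g^k|^r \leq |g|^s$ on $\{h<1\}$ since $|g|<1$ there. Combined with $\omega \leq 1$, which handles the $\beta_1,\beta_2^{p'}$ terms, this yields
\begin{equation*}
\int_{\Omega_R} |\nabla u^k|^p \omega\,\d x + \int_{\Omega_R} |u^k|^r \omega\,\d x \leq C\left(\int_{\Omega_{2R}}|f|^q\,\d x + \int_{\Omega_{2R}}|g|^s\,\d x + \int_{\Omega_{2R}}\beta_1\,\d x + \int_{\Omega_{2R}}\beta_2^{p'}\,\d x\right) + C_R,
\end{equation*}
uniformly in $k$, which establishes boundedness of $(\nabla u^k)$ in $L^p_\omega(\Omega_R;\mathbb{R}^{d\times N})$ and of $(u^k)$ in $L^r_\omega(\Omega_R;\mathbb{R}^N)$.

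Next, I would upgrade to unweighted estimates by invoking Lemma \ref{lem:Emb_Mf}. For the gradient, I apply it with exponents $(p,q)$; the required condition is $\omega^{-1} = (M[h\chi_{\Omega_{2R}}]+1)^\varepsilon \in L^{q/(p-q)}(\Omega_R) = L^{q/\varepsilon}(\Omega_R)$, which amounts to $M[h\chi_{\Omega_{2R}}] \in L^q(\Omega_R)$. Since $h\chi_{\Omega_{2R}} \in L^q(\mathbb{R}^d)$ and $q>1$, the Hardy--Littlewood maximal theorem provides the required integrability (and the $+1$ is harmless because $\Omega_R$ is bounded). For $(u^k)$, the essential observation is that with $s = r(p-\varepsilon)/p$ one has $s/(r-s) = (p-\varepsilon)/\varepsilon = q/\varepsilon$, so applying Lemma \ref{lem:Emb_Mf} with exponents $(r,s)$ requires exactly the same integrability of $\omega^{-1}$, which has just been verified.

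The only delicate point, and the step I would think of as the main obstacle, is the passage from weighted to unweighted bounds: it hinges on the coincidence $s/(r-s)=q/(p-q)=q/\varepsilon$, together with the fact that the choice $h=|f|+|g|^{s/q}$ places $h$ in $L^q_{\rm loc}$ (not merely in $L^1_{\rm loc}$), which is precisely what makes the Hardy--Littlewood maximal operator applicable to yield $M[h\chi_{\Omega_{2R}}] \in L^q$. Once this is in place, Lemma \ref{lem:Emb_Mf} finishes the argument routinely.
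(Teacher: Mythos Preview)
Your proposal is correct and follows essentially the same route as the paper: bound the right-hand side of \eqref{loc_en_est_k} uniformly in $k$ via the pointwise inequality $M[h\chi_{\Omega_{2R}}]+1 \geq h \geq |f|,\,|g|^{s/q}$ together with the identities $p-\varepsilon = q$ and $r-\varepsilon s/q = s$, and then pass to unweighted bounds via Lemma~\ref{lem:Emb_Mf} using the coincidence $q/(p-q)=s/(r-s)$ and the $L^q$-boundedness of $M$. The only cosmetic difference is that the paper avoids your case split $\{h\geq 1\}$ versus $\{h<1\}$ by using directly $M[h\chi_{\Omega_{2R}}]+1 \geq |f|$ (resp.\ $\geq |g|^{s/q}$) on $\Omega_{2R}$, which gives $|f|^p\omega \leq |f|^{p-\varepsilon}$ in one stroke.
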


\begin{proof}
For each $R > 0$, we observe that
$$
 M[h\chi_{\Omega_{2R}}] + 1 \geq M[|f|\chi_{\Omega_{2R}}] \geq |f|\chi_{\Omega_{2R}} = |f| \ \mbox{ a.e.~in } \Omega_{2R},
$$
which along with \eqref{omega} and $\vep = p-q$ implies
\begin{equation}\label{cut-f}
\int_{\Omega_{2R}} |f^k|^p \omega \, \d x \leq \int_{\Omega_{2R}} |f|^p \omega \, \d x \leq \int_{\Omega_{2R}} |f|^{q} \, \d x < +\infty
\end{equation}
for $k \inz_{\geq 1}$. Similarly, noting that $M[h\chi_{\Omega_{2R}}] + 1 \geq |g|^{s/q}$ a.e.~in $\Omega_{2R}$ and $r - \vep s/q = s$, we find that 
\begin{equation}\label{cut-g}
\int_{\Omega_{2R}} |g^k|^r \omega \, \d x \leq \int_{\Omega_{2R}} |g|^r \omega \, \d x \leq \int_{\Omega_{2R}} |g|^{s} \, \d x < +\infty
\end{equation}
for $k \inz_{\geq 1}$. Hence we observe from \eqref{loc_en_est_k} that $(\nabla u^k)_k$ is bounded in $L^p_\omega(\Omega_R;\mathbb{R}^{d\times N})$ and $(u^k)_k$ is bounded in $L^r_\omega(\Omega_R;\mathbb{R}^N)$ as $k \to \infty$. 

Thanks to Lemma \ref{lem:Emb_Mf}, since $\omega^{-1} = (M[h \chi_{\Omega_{2R}}] + 1)^{p-q} \in L^{q/(p-q)}_{\rm loc}(\R^d) = L^{s/(r-s)}_{\rm loc}(\R^d)$ (indeed, $h \in L^q_{\rm loc}(\overline{\Omega})$ and $M$ is bounded in $L^q(\R^d)$) and $q/(p-q) = s/(r-s)$ from $s = rq/p$, both embeddings $L^p_\omega(\Omega_R;\mathbb{R}^{d\times N}) \hookrightarrow L^q(\Omega_R;\mathbb{R}^{d\times N})$ and $L^r_\omega(\Omega_R;\mathbb{R}^{N}) \hookrightarrow L^s(\Omega_R;\mathbb{R}^{N})$ are continuous. Therefore $(\nabla u^k)_k$ is bounded in $L^q(\Omega_R;\mathbb{R}^{d\times N})$, and $(u^k)_k$ is so in $L^s(\Omega_R;\mathbb{R}^N)$. 
\end{proof}

From the reflexivity, one can take a (not relabeled) subsequence and measurable functions $u: \R^d \to \R^N$, $v_1 : \R^d \to \R^{d\times N}$ and $v_2 : \R^d \to \R^N$ such that $u\in W^{1,p}_{\omega}(\Omega_R;\mathbb{R}^N)\cap L^r_{\omega}(\Omega_R;\mathbb{R}^N)$, $v_1\in L^{p'}_{\omega}(\Omega_R;\mathbb{R}^{d\times N})$, $v_2\in L^{r'}_{\omega}(\Omega_R;\mathbb{R}^N)$ and
\begin{alignat}{4}
u^k&\to u\quad  &&{\rm weakly\ in}\ W^{1,p}_\omega(\Omega_{R};\mathbb{R}^N),\label{c:W1pom}\\
& &&{\rm weakly\ in}\ L^r_\omega(\Omega_{R};\mathbb{R}^N),\notag\\
A(x,\nabla u^k)&\to v_1\quad && {\rm weakly\ in}\ L^{p'}_\omega(\Omega_{R};\mathbb{R}^{d\times N}),\label{conv:A(x,nabla u^k)weakly)}\\
|u^k|^{r-2}u^k&\to v_2\quad && {\rm weakly\ in}\ L^{r'}_\omega(\Omega_{R};\mathbb{R}^N),\label{conv:|u^k|^r-1weakly}
\end{alignat}
for any $R > 0$. Here we remark that $u$, $v_1$ and $v_2$ can be constructed being independent of $R > 0$ by a diagonal argument. Moreover, one has

\begin{lemma}\label{lem:Range}
For any $\varphi\in C_c^\infty(\Omega;\mathbb{R}^N)$, it holds that
\begin{align}
\lim_{k \to\infty} \int_{\Omega}|u^k|^{r-2}u^k\cdot\varphi\,\d x
&= \int_{\Omega}v_2\cdot\varphi\,\d x,\label{c1}\\
\lim_{k \to\infty}\int_{\Omega}A(x,\nabla u^k):\nabla\varphi\,\d x&=\int_{\Omega}v_1:\nabla\varphi\,\d x.\label{c2}
\end{align}
\end{lemma}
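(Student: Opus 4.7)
The plan is to reduce \eqref{c1}--\eqref{c2} directly to the weak convergences \eqref{conv:A(x,nabla u^k)weakly)} and \eqref{conv:|u^k|^r-1weakly} already established in the weighted spaces $L^{p'}_\omega(\Omega_R;\R^{d\times N})$ and $L^{r'}_\omega(\Omega_R;\R^N)$. Since $\varphi\in C_c^\infty(\Omega;\R^N)$, first I would choose $R>0$ so that $\supp\varphi\subset\Omega_R$ and rewrite
$$
\int_\Omega A(x,\nabla u^k):\nabla\varphi\,\d x=\int_{\Omega_R}A(x,\nabla u^k):\bigl(\omega^{-1}\nabla\varphi\bigr)\,\omega\,\d x,
$$
and analogously $\int_\Omega|u^k|^{r-2}u^k\cdot\varphi\,\d x=\int_{\Omega_R}|u^k|^{r-2}u^k\cdot(\omega^{-1}\varphi)\,\omega\,\d x$. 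Under the duality $(L^p_\omega)^*\simeq L^{p'}_\omega$ recalled in Definition~\ref{def:Muckenhoupt}(iii), the convergence in \eqref{conv:A(x,nabla u^k)weakly)} transfers to this pairing as soon as $\omega^{-1}\nabla\varphi\in L^p_\omega(\Omega_R;\R^{d\times N})$, and similarly for the other integral.

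The core of the proof is therefore checking these two integrability conditions. Since $\omega=(M[h\chi_{\Omega_{2R}}]+1)^{-\vep}$, one has $\omega^{1-p}=(M[h\chi_{\Omega_{2R}}]+1)^{\vep(p-1)}$, so the task is to bound
$$
\int_{\Omega_R}|\nabla\varphi|^p\,\omega^{1-p}\,\d x
\leq \|\nabla\varphi\|_\infty^p\int_{\supp\varphi}\bigl(M[h\chi_{\Omega_{2R}}]+1\bigr)^{\vep(p-1)}\,\d x.
$$
Since $h\in L^q_{\loc}(\overline{\Omega})$ with $q=p-\vep>1$, the Hardy--Littlewood maximal theorem gives $M[h\chi_{\Omega_{2R}}]\in L^q(\R^d)$. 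The relevant exponent satisfies $\vep(p-1)<q$ because $\vep(p-1)<p-\vep\iff\vep<1$, which holds for $\vep<\vep_0\leq p/r<1$. H\"older's inequality on the bounded set $\supp\varphi$ then yields finiteness.

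For the second pairing I would verify $\omega^{-1}\varphi\in L^r_\omega(\Omega_R;\R^N)$ in the same way, reducing it to the local integrability of $(M[h\chi_{\Omega_{2R}}]+1)^{\vep(r-1)}$. This now needs $\vep(r-1)\leq q$, which is equivalent to $\vep r\leq p$, i.e., $\vep\leq p/r$; this is precisely the constraint built into the definition $\vep_0=\min\{\vep_1,p/r\}$ in Theorem~\ref{thm:elliplocDS}. With both integrability conditions established, one tests \eqref{conv:A(x,nabla u^k)weakly)} against $\omega^{-1}\nabla\varphi$ and \eqref{conv:|u^k|^r-1weakly} against $\omega^{-1}\varphi$, and \eqref{c1}--\eqref{c2} follow at once.

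The main obstacle, and the pedagogically interesting point, is not the passage to the limit itself but identifying \emph{where} the constraint $\vep\leq p/r$ becomes essential: it is invisible in the weighted energy estimate of Proposition~\ref{prop:LocEstimate} (which only requires $\vep<\vep_1$) and appears here as the exact threshold ensuring that the smooth compactly supported test function $\varphi$, after twisting by $\omega^{-1}$, lands in the predual $L^r_\omega$ of the space in which $|u^k|^{r-2}u^k$ converges weakly.
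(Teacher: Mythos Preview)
Your proof is correct and follows essentially the same route as the paper: pick $R$ with $\supp\varphi\subset\Omega_R$, rewrite each integral as a weighted pairing with $\omega^{-1}\varphi$ (respectively $\omega^{-1}\nabla\varphi$), verify that this twisted test function lies in $L^r_\omega$ (respectively $L^p_\omega$), and then invoke \eqref{conv:|u^k|^r-1weakly} and \eqref{conv:A(x,nabla u^k)weakly)}. The paper phrases the key integrability check as $\omega^{1-r}\in L^{s/((r-s)(r-1))}(\Omega_R)$ with exponent $>1$ because $s>r-1$, whereas you compute $\omega^{1-r}=(M[h\chi_{\Omega_{2R}}]+1)^{\vep(r-1)}$ and invoke $\vep(r-1)\leq q$; since $s=rq/p$ and $q=p-\vep$, these two conditions are equivalent, and your remark that this is exactly where the constraint $\vep\leq p/r$ enters is on point.
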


\begin{proof}
Let $\varphi\in C_c^\infty(\Omega;\mathbb{R}^N)$ and $R > 0$ be such that $\mbox{supp}\,\varphi\subset\Omega_R$. Noting that $\omega^{1-r} \in L^{s/((r-s)(r-1))}(\Omega_R)$ with $s/((r-s)(r-1)) > 1$ by $s > r-1$, we find that
$$
\int_{\Omega_R}|\varphi\omega^{-1}|^r\omega\,\d x = \int_{\Omega_R}|\varphi|^r\omega^{1-r}\,\d x<\infty,
$$
which yields $\varphi\omega^{-1}\in L^r_\omega(\Omega_R;\mathbb{R}^N)$. Moreover, it follows from \eqref{conv:|u^k|^r-1weakly} that
\begin{align*}
 \int_{\Omega}|u^k|^{r-2}u^k\cdot\varphi\,\d x
 &=\int_{\Omega_R}|u^k|^{r-2}u^k\cdot\varphi\,\d x\\
 &=\int_{\Omega_R}|u^k|^{r-2}u^k\cdot(\varphi\omega^{-1})\omega\,\d x\\
 &\to\int_{\Omega_R}v_2\cdot(\varphi\omega^{-1})\omega\,\d x
 =\int_{\Omega}v_2\cdot\varphi\,\d x,
\end{align*}
which implies \eqref{c1}. Similarly, \eqref{c2} also follows for any $\varphi\in C_c^\infty(\Omega;\mathbb{R}^N)$. Indeed, we recall that $\omega^{1-p} \in L^{q/((p-q)(p-1))}(\Omega_R)$, which implies $\varphi \omega^{-1} \in L^p_\omega(\Omega_R;\R^N)$. Thus the proof is complete.
\end{proof}

Now, we recall that $u^k$ is the unique weak solution to ${\sf (P)}_k$, that is,
\begin{align}
\lefteqn{
\int_\Omega A(x,\nabla u^k) : \nabla\varphi\,\d x + \int_\Omega|u^k|^{r-2}u^k \cdot \varphi\,\d x
}\notag\\
&= \int_\Omega|f^k|^{p-2}f^k : \nabla\varphi\,\d x + \int_\Omega|g^k|^{r-2}g^k \cdot \varphi\,\d x\label{k}
\end{align}
for any $\varphi\in C_c^\infty(\Omega;\mathbb{R}^N)$. Passing to the limit as $k\to\infty$, we deduce that
\begin{equation}\label{WeakLim}
\int_\Omega v_1:\nabla\varphi\,\d x+\int_\Omega v_2\cdot\varphi\,\d x= \int_\Omega|f|^{p-2}f:\nabla\varphi\,\d x+\int_\Omega|g|^{r-2}g\cdot\varphi\,\d x
\end{equation}
for any $\varphi\in C_c^\infty(\Omega;\mathbb{R}^N)$. 

We next identify the weak limits. First of all, we claim that
\begin{equation}\label{claim1}
v_2=|u|^{r-2}u \quad \mbox{ a.e.~in } \Omega.
\end{equation}
Indeed, let $n \in \mathbb{N}$ and let $\Omega^{(n)}:=\{x\in\Omega\,:\,\mbox{dist}(x,\partial\Omega)>1/n\}$. Then the set $\overline{B_R} \cap \overline{\Omega^{(n)}}$ is compact in $\mathbb{R}^d$ for each $R>0$. Since $(B_{1/(2n)}(x))_{x\in\overline{B_R}\cap\overline{\Omega^{(n)}}}$ is an open covering of the set $\overline{B_R} \cap \overline{\Omega^{(n)}}$, one can take $m(n) \inz_{\geq 1}$ and $x_1^{(n)},\ldots,x_{m(n)}^{(n)}\in\overline{B_R}\cap\overline{\Omega^{(n)}}$ such that $\overline{B_R}\cap\overline{\Omega^{(n)}} \subset \bigcup_{i=1}^{m(n)}B_{1/(2n)}(x_i^{(n)})$. We note that $B_{1/(2n)}(x_i^{(n)})\subset\Omega_{2R}$ for each $i$. Therefore due to Proposition \ref{prop:LocEstimate}, $(u^k)$ turns out to be bounded in $W^{1,q}(B_{1/(2n)}(x_i^{(n)}))$ (uniformly in $k$). Moreover, $W^{1,q}(B_{1/(2n)}(x_i^{(n)}))$ is compactly embedded in $L^{q}(B_{1/(2n)}(x_i^{(n)}))$, we can take a (not relabeled) subsequence of $(u^k)$ such that $u^k \to u$ a.e.~in $B_{1/(2n)}(x_i^{(n)})$. Repeating this procedure $m(n)$-times, we eventually take a subsequence so that $u^k \to u$ a.e.~in $B_R \cap \Omega^{(n)}$. Hence by a diagonal argument, up to a subsequence, $u^k \to u$ a.e.~in $\bigcup_{n\inz_{\geq 1}}\left( B_R \cap \Omega^{(n)} \right)$. Since $\Omega_R\subset\bigcup_{n\inz_{\geq 1}}\left( B_R \cap \Omega^{(n)} \right)$, we particularly observe that
\begin{equation}\label{c:pnt}
u^k\to u\quad\mbox{a.e. in }\Omega_R.
\end{equation}
Therefore, by continuity, for each $R>0$, we have $|u^k|^{r-2}u^k\to |u|^{r-2}u$ a.e.~in $\Omega_R$, and hence, we can conclude that $v_2=|u|^{r-2}u$ a.e.~in $\Omega_R$. From the arbitrariness of $R>0$, we obtain \eqref{claim1}.

It still remains to show that
\begin{equation}\label{claim2}
v_1=A(\cdot,\nabla u)\quad \mbox{a.e. in }\Omega,
\end{equation}
which will be proved below. Let $R>0$ be fixed. We shall apply Lemma \ref{lem:WDC} with the choice $D=\Omega_R\subset\mathbb{R}^d$, $a^k=\nabla u^k_l \in L^p_\omega(\D;\R^d)$ and $b^k=[A(x,\nabla u^k)]_{\,\cdot\,l} \in L^{p'}_\omega(\D;\R^d)$ for each component, $l = 1,\ldots,N$. To this end, we note that $\omega \in \mathcal{A}_p$, which can be assured by virtue of Lemma \ref{lem:Muckenhoupt} along with the fact that $\vep = p-q < p-1$ (see \eqref{omega}). Moreover, let us first check the assumptions of Lemma \ref{lem:WDC}.

We can check (i) immediately from Lemma \ref{lem:ellipuksomebddness} together with \eqref{ineq:ellipAbdd}. We next check (ii). Let $\varphi \in W^{1,\infty}_0(\Omega_R)$. Then for each $\sigma \in [1,\infty)$ there exists a sequence $(\varphi_m)$ in $C_c^\infty(\Omega_R)$ such that $\varphi_m \to \varphi$ strongly in $W^{1,\sigma}_0(\Omega_R)$. Here we take $\sigma \in (p',\infty)$ so that $W^{1,p}_\omega(\Omega_R) \hookrightarrow W^{1,\sigma'}(\Omega_R)$ (see Lemma \ref{lem:Emb}). An integration by parts yields
\begin{align*}
\lefteqn{
\int_{\Omega_R}((\nabla u^k_l)_j\partial_i\varphi_m-(\nabla u^k_l)_i\partial_j\varphi_m)\,\d x
}\\
 &= -\int_{\Omega_R}(u^k_l\partial_{j}\partial_{i}\varphi_m-u^k_l\partial_{i}\partial_{j}\varphi_m)\,\d x =0,
\end{align*}
which yields
\begin{align*}
\lefteqn{
\int_{\Omega_R}((\nabla u^k_l)_j\partial_i\varphi-(\nabla u^k_l)_i\partial_j\varphi)\,\d x
}\\
&= \int_{\Omega_R} \left[ (\nabla u^k_l)_j\partial_i(\varphi-\varphi_m)-(\nabla u^k_l)_i\partial_j(\varphi-\varphi_m) \right]\,\d x\\
&\quad + \int_{\Omega_R}((\nabla u^k_l)_j\partial_i\varphi_m-(\nabla u^k_l)_i\partial_j\varphi_m)\,\d x \to 0
\end{align*}
as $m \to \infty$. Thus (ii) follows. Finally, let us check (iii). Let $(c^k)$ be a sequence in $W^{1,\infty}_0(\Omega_R)$ such that $\nabla c^k \to 0$ weakly star in $L^\infty(\Omega_R;\mathbb{R}^d)$. We use the same letter $c^k$ for the zero extension of $c^k$ onto $\Omega \subset \R^d$. Then we see that $c^k\in W^{1,p}_0(\Omega)\cap L^r(\Omega)$ since $c^k$ lies on $W^{1,p}_0(\Omega_R)\cap L^r(\Omega_R)$. Now, for each $k \inz_{\geq 1}$, one can take a test function $\varphi\in X^{p,r}(\Omega;\mathbb{R}^N)$ whose $j$-th  component $\varphi_j$ is given by
\begin{align*}
 \varphi_j(x) &= \begin{cases}
		   c^k(x)& \mbox{if } \ j=l,\\
		   0 & \mbox{if } \ j\neq l
		  \end{cases}
\end{align*}
for $x \in \Omega$. Then we have
\begin{align*}
\MoveEqLeft{
\int_{\Omega_R} A(\cdot,\nabla u^k)_l\cdot\nabla c^k\,\d x+\int_{\Omega_R} (|u^k|^{r-2}u^k)_lc^k\,\d x
}\\
&=\int_{\Omega_R} (|f^k|^{p-2}f^k)_l\cdot\nabla c^k\,\d x+\int_{\Omega_R} (|g^k|^{r-2}g^k)_l c^k\,\d x.
\end{align*}
Since $(c^k)$ is bounded in $W^{1,d+1}_0(\Omega_R)$ from Poincar\'e's inequality, there exists $c \in W^{1,d+1}_0(\Omega_R)$ such that, up to a subsequence, $c^k \to c$ weakly in $W^{1,d+1}_0(\Omega_R)$. Moreover, from the compact embedding $W^{1,d+1}_0(\Omega_R) \hookrightarrow L^{\infty}(\Omega_R)$, taking a (not relabeled) subsequence, we deduce that $c^k \to c$ in $L^{\infty}(\Omega_R)$ and a.e.~in $\Omega_R$. Furthermore, one finds that
$$
\nabla c^k \to \nabla c \quad \mbox{ weakly in } L^{d+1}(\Omega_R;\mathbb{R}^d).
$$
Recalling that $\nabla c^k \to 0$ weakly star in $L^\infty(\Omega_R;\mathbb{R}^d)$, we infer that $\nabla c = 0$ a.e.~in $\Omega_R$. Hence it is constant in $\Omega_R$. As $c \in W^{1,1}_0(\Omega_R)$, Poincar\'e's inequality yields $c=0$. Since $L^{r}_\omega(\Omega_R;\mathbb{R}^N)$ is continuously embedded in $L^{s}(\Omega_R;\mathbb{R}^N)$, it follows from Lemma \ref{lem:ellipuksomebddness} that $(|u^k|^{r-2}u^k)_k$ is bounded in $L^{s/(r-1)}(\Omega_R;\mathbb{R}^N)$. Since $c^k \to 0$ in $L^\infty(\Omega_R)$, we get
$$
\int_{\Omega_R} (|u^k|^{r-2}u^k)_lc^k\,\d x \to 0 \quad \mbox{ as } \ k\to\infty.
$$
Therefore one obtains
\begin{align*}
\MoveEqLeft{
\int_{\Omega_R} A(\cdot,\nabla u^k)_l\cdot\nabla c^k\,\d x
}\\
 &=-\int_{\Omega_R} (|u^k|^{r-2}u^k)_lc^k\,\d x + \int_{\Omega_R} (|f^k|^{p-2}f^k)_l\cdot\nabla c^k\,\d x\\
 &\quad + \int_{\Omega_R} (|g^k|^{r-2}g^k)_lc^k\,\d x
 \to 0\quad\mbox{ as } \ k \to \infty.
\end{align*}
Here we also used the fact that $|f^k|^{p-2}f^k \to |f|^{p-2}f$ strongly in $L^1(\Omega;\R^{d\times N})$ as $k \to \infty$ (indeed, it can be proved from the definition of $f^k$ along with $q > p-1$ and Lebesgue's dominated convergence theorem).

Thus we have checked all the assumptions of Lemma \ref{lem:WDC}, and therefore, we can take a subsequence of $(k)$ and a strictly increasing sequence $(E^j_R)$ of measurable subsets of $\Omega_R$ such that $|\Omega_R \setminus E^j_R| \to 0$ as $j \to \infty$ and
$$
A(\cdot,\nabla u^k)_l\cdot(\nabla u^k)_l\,\omega \to (v_1)_l \cdot(\nabla u)_l \,\omega \quad \mbox{ weakly in } \ L^1(E^j_R)
$$
for all $j\inz_{\geq 1}$ and $l \in \{1,\ldots,N\}$. Therefore summing up both sides in $l$, we deduce that
\begin{equation}\label{conv:ellipticweakconvv1nablau}
 A(\cdot,\nabla u^k):(\nabla u^k)\,\omega \to v_1:(\nabla u)\,\omega \quad \mbox{ weakly in } \ L^1(E^j_R)
\end{equation}
for all $j \inz_{\geq 1}$.

On the other hand, from the monotonicity \eqref{ineq:ellipAmono} of $A$, one derives that
$$
\int_{E^j_R} (A(\cdot,\nabla u^k)-A(\cdot,G)) : (\nabla u^k-G)\omega\,\d x \geq 0
$$
for any $G \in L^p_\omega(\Omega;\mathbb{R}^{d\times N})$. Passing to the limit as $k \to \infty$ and using \eqref{c:W1pom}, \eqref{conv:A(x,nabla u^k)weakly)} and \eqref{conv:ellipticweakconvv1nablau}, we obtain
$$
\int_{E^j_R}(v_1-A(\cdot,G)):(\nabla u-G)\omega\,\d x\geq 0.
$$
Moreover, let $j\to\infty$. It follows that
\begin{equation}\label{1}
\int_{\Omega_R}(v_1-A(\cdot,G)):(\nabla u-G)\omega\,\d x\geq 0.
\end{equation}
Let $\delta>0$ and $H\in L^\infty(\Omega_R;\mathbb{R}^{d\times N})$ be arbitrarily fixed. Set
$$
G=\nabla u-\delta H.
$$
Then $G\in L^p_\omega(\Omega_R;\mathbb{R}^{d\times N})$. We substitute it to $G$ of \eqref{1} to see that
$$
\int_{\Omega_R}(v_1-A(\cdot,\nabla u-\delta H)):\delta H\omega\,\d x\geq 0.
$$
Dividing both sides by $\delta>0$ and passing to the limit as $\delta\to 0$, with the aid of Lebesgue's dominated convergence theorem, we derive that
$$
\int_{\Omega_R}(v_1-A(\cdot,\nabla u)) : H\omega\,\d x\geq 0.
$$
Replacing $H$ with $-H$, we obtain
$$
\int_{\Omega_R}(v_1-A(\cdot,\nabla u)) : H\omega\,\d x=0,
$$
whence it follows that $v_1=A(\cdot,\nabla u)$ a.e.~in $\Omega_R$. From the arbitrariness of $R>0$, we conclude that $v_1=A(\cdot,\nabla u)$ a.e.~in $\Omega$, that is, \eqref{claim2} holds true. 

Thus \eqref{WeakLim} and \eqref{claim1} yield
\begin{align*}
\MoveEqLeft{
\int_\Omega A(\cdot,\nabla u):\nabla\varphi\,\d x+\int_\Omega|u|^{r-2}u\cdot\varphi\,\d x
}\\
&= \int_\Omega|f|^{p-2}f:\nabla\varphi\,\d x+\int_\Omega|g|^{r-2}g\cdot\varphi\,\d x\quad \mbox{ for } \ \varphi\in C_c^\infty(\Omega;\mathbb{R}^N).
\end{align*}
Moreover, thanks to Proposition \ref{prop:LocEstimate}, the weighted estimate \eqref{thm:Localest} follows from the weak lower semicontinuity of the norms.

Finally, we check the boundary condition \BC. Let $R>0$ be fixed and let $\rho_R$ be a cut-off function satisfying \eqref{cut-off}. Since $u^k \in X^{p,r}(\Omega;\mathbb{R}^N)$, we find that $u^k\rho_R \in W^{1,p}_0(\Omega;\mathbb{R}^N)$, which together with the boundedness of $B_R$ also yields $u^k\rho_R \in W^{1,q}_0(\Omega;\mathbb{R}^N)$. Furthermore, as $(u^k\rho_R)_k$ is bounded in $W^{1,q}_0(\Omega;\mathbb{R}^N)$ (see Lemma \ref{lem:ellipuksomebddness}), there exists $v \in W^{1,q}_0(\Omega;\mathbb{R}^N)$ such that
\begin{equation}\label{conv:u^krho_R weakly}
 u^k\rho_R\to v\quad \mbox{ weakly in } \ W^{1,q}_0(\Omega;\mathbb{R}^N)
\end{equation}
up to a subsequence. On the other hand, since we have also seen that $u^k \to u$ a.e.~in $\Omega$ up to a subsequence (see \eqref{c:pnt} with a diagonal argument), we conclude that $u\rho_R=v$ a.e.~in $\Omega$, and hence, $u\rho_R \in W^{1,q}_0(\Omega;\mathbb{R}^N)$, which also implies $u\rho_R \in W^{1,1}_0(\Omega;\mathbb{R}^N)$. Thus \BC \ follows. This completes the proof.
\end{proof}

Then Corollary \ref{C:main} also follows.

\begin{proof}[Proof of Corollary {\rm \ref{C:main}}]
Recalling \eqref{cut-f} and \eqref{cut-g} and utilizing Lemma \ref{lem:Emb_Mf}, one can also derive \eqref{thm:Localest2} from \eqref{thm:Localest}. Indeed, as in \eqref{Lq-Lpom}, one has
\begin{align*}
\int_{\Omega_R} |\nabla u|^q \, \d x 
&\leq \int_{\Omega_R} |\nabla u|^p \omega \, \d x + C \|\omega^{-1}\|_{L^{q/(p-q)}}^{q/(p-q)}\\
&\leq \int_{\Omega_R} |\nabla u|^p \omega \, \d x + C \left( \int_{\Omega_{2R}} |f|^q \, \d x + \int_{\Omega_{2R}} |g|^s \, \d x + \delta^\vep R^d\right)
\end{align*}
for some constant $C$. Optimizing \eqref{thm:Localest} with the above in $\delta$, we can reach the desired conclusion.
\end{proof}

\begin{remark}
The arguments exhibited so far may also be extended to more general equations such as
$$
\left\{
\begin{alignedat}{4}
&-\dv \,A(x,\nabla u)+\beta(x,u)=-\,\dv \,(|f|^{p-2}f)+|g|^{r-2}g \ &&\mbox{ in } \Omega,\\
&u= 0 \ &&\mbox{ on } \ \partial \Omega,
\end{alignedat}
\right.
$$
where $1<q<p<r<\infty$, $1<s<r$, $f\in L^q_{\loc}(\overline{\Omega};\mathbb{R}^{d\times N})$, $g\in L^s_{\loc}(\overline{\Omega};\mathbb{R}^{N})$ and $A:\Omega\times\mathbb{R}^{d\times N}\to \mathbb{R}^{d\times N}$ is given as before. Moreover, $\beta:\Omega\times\mathbb{R}^N\to\mathbb{R}^N$ is also supposed to be a Carath\'eodory function satisfying the following assumptions: There exist constants $C_3,C_4>0$ and nonnegative functions $\beta_3\in L^1_{\loc}(\overline{\Omega})$, $\beta_4\in L^{r'}_{\loc}(\overline{\Omega})$ such that
\begin{gather*}
 \beta(x,z)\cdot z\geq C_3|z|^{r}-\beta_3(x),\\
 |\beta(x,z)|\leq C_4|z|^{r-1}+\beta_4(x),\\
 (\beta(x,z_1)-\beta(x,z_2))\cdot(z_1-z_2)\geq 0
\end{gather*}
for $z,z_1,z_2\inr^{N}$ and a.e.~$x\in\Omega$.
\end{remark}

\section*{Acknowledgments}
The authors wish to thank anonymous referees for their valuable comments. G.A.~is supported by JSPS KAKENHI Grant Numbers JP24H00184, JP21KK0044, JP21K18581, JP20H01812 and JP20H00117. This work was also supported by the Research Institute for Mathematical Sciences, an International Joint Usage/Research Center located in Kyoto University.

\bibliographystyle{plain}
\bibliography{bibliography}

\end{document}